\let\OLDthebibliography\thebibliography
\renewcommand\thebibliography[1]{
  \OLDthebibliography{#1}
  \setlength{\parskip}{3pt}
  \setlength{\itemsep}{2.5pt plus 0.5ex}
}
\newcommand{\hm}{\widehat{m}}
\newcommand{\hJ}{\widehat{J}}
\newcommand{\hA}{\widehat{A}}
\newcommand{\cP}{\mathscr{P}}
\newcommand{\cX}{\mathcal{X}}
\newcommand{\cA}{\mathcal{A}}
\newcommand{\cU}{\mathcal{U}}
\newcommand{\cM}{\mathcal{M}}
\newcommand{\sM}{\mathscr{M}}
\newcommand{\cW}{\mathcal{W}}
\newcommand{\sd}{\mathsf{d}}
\newcommand{\bA}{\mathbb{A}}
\newcommand{\bT}{\mathbb{T}}
\newcommand{\Haus}{\mathscr{H}}
\newcommand{\one}{{{\bf 1}}}
\newcommand{\Lip}{\mathrm{Lip}}
\newcommand{\Leb}{\mathscr{L}}
\newcommand{\bCE}{\mathbb{CE}}
\newcommand{\CE}{\mathsf{CE}}
\newcommand{\R}{\mathbb{R}}
\newcommand{\Z}{\mathbb{Z}}
\newcommand{\N}{\mathbb{N}}
\newcommand{\dd}{\, \mathrm{d}}
\newcommand{\ddd}{\mathrm{d}}
\newcommand{\eps}{\varepsilon}
\newcommand{\tand}{\quad\text{ and }\quad}
\newcommand{\ddt}{\frac{\mathrm{d}}{\mathrm{d}t}}
\newcommand{\suchthat}{\ensuremath{\ : \ }} 
\theoremstyle{plain}
\newtheorem{theorem}{Theorem}[section]
\newtheorem{corollary}[theorem]{Corollary}
\newtheorem{lemma}[theorem]{Lemma}
\newtheorem{proposition}[theorem]{Proposition}
\newtheorem{definition}[theorem]{Definition}
\theoremstyle{remark}
\newtheorem{remark}[theorem]{Remark}
\newtheorem{example}[theorem]{Example}
\newtheorem*{claim*}{Claim}
\newtheorem*{remark*}{Remark}
\newtheorem*{example*}{Example}
\newtheorem*{notation*}{Notation}
\numberwithin{equation}{section}
\definecolor{jan}{rgb}{0.0,0.3,0.8}
\definecolor{mat}{rgb}{0.0,0.5,0.3}
\newcommand{\cT}{\mathcal{T}}
\newcommand{\cS}{\mathcal{S}}
\newcommand{\cV}{\mathscr{V}}
\newcommand{\emm}{m}
\newcommand{\dive}{\partial_x}
\newcommand{\supp}{\operatorname{supp}}
\newcommand{\diam}{\operatorname{diam}}
\newcommand{\bW}{\mathbb{W}_2}
\newcommand{\bM}{\mathbb{M}}
\title{Homogenisation of one-dimensional discrete optimal transport}
\author{Peter Gladbach}
\address{Mathematisches Institut, Universit\"at Leipzig, Augustusplatz 5, 04103 Leipzig}
\email{gladbach@math.uni-leipzig.de}
\author{Eva Kopfer}
\address{Institut f\"ur angewandte Mathematik, Universit\"at Bonn, Endenicher Allee 60, 53115 Bonn, Germany}
\email{eva.kopfer@iam.uni-bonn.de}
\author{Jan Maas}
\author{Lorenzo Portinale}
\address{Institute of Science and Technology Austria (IST Austria),
Am Campus 1, 3400 Klosterneuburg, Austria}
\email{jan.maas@ist.ac.at}
\email{lorenzo.portinale@ist.ac.at}
\begin{document}

\begin{abstract}
This paper deals with dynamical optimal transport metrics defined by spatial discretisation of the Benamou--Benamou formula for the Kantorovich metric $\bW$.
Such metrics appear naturally in discretisations of $\bW$-gradient flow formulations for dissipative PDE. 
However, it has recently been shown that these metrics do not in general converge to $\bW$, unless strong geometric constraints are imposed on the discrete mesh. 
In this paper we prove that, in a $1$-dimensional periodic setting, discrete transport metrics converge to a limiting transport metric with a non-trivial effective mobility. This mobility depends sensitively on the geometry of the mesh and on the non-local mobility at the discrete level.
Our result quantifies to what extent discrete transport can make use of microstructure in the mesh to reduce the cost of transport.
\end{abstract}
\maketitle


\section{Introduction}
\label{sec:intro}

In the past decades there has been intense research activity in the area of  optimal transport, cf.~the monographs \cite{Villani:2003,Villani:2008,Santambrogio:2015,Peyre-Cuturi:2019} for an overview of the subject.
In continuous settings, a key result in the field is the \emph{Benamou--Brenier formula} \cite{Benamou-Brenier:2000}, which expresses the equivalence of static and dynamical formulations of the optimal transport problem. 
In discrete settings, the equivalence between static and dynamical optimal transport breaks down, and it turns out that the dynamical formulation (introduced in \cite{Maas:2011,Mielke:2011}) is essential in applications to evolution equations, discrete Ricci curvature, and functional inequalities, see, e.g., \cite{Chow-Huang-Li:2012,Erbar-Maas:2012,Mielke:2013,Erbar-Maas:2014, Erbar-Maas-Tetali:2015, Fathi-Maas:2016,Erbar-Henderson-Menz:2017,Erbar-Fathi:2018}.

However, the limit passage from discrete dynamical transport to continuous optimal transport turns out to be nontrivial.
In fact, it has been shown in \cite{Gladbach-Kopfer-Maas:2018} that seemingly natural  discretisations of the Benamou--Brenier formula do \emph{not} necessarily converge to the Kantorovich distance $\bW$, even in one-dimensional settings.
The main result in \cite{Gladbach-Kopfer-Maas:2018} asserts that, for a sequence of meshes on a bounded convex domain in $\R^d$,  an isotropy condition on the meshes is required to obtain the convergence of the discrete dynamical transport distances to $\bW$.

It remained an open question to identify the limiting behaviour of the discrete metrics in situations where the isotropy condition fails to hold. 
The aim of the current paper is to answer this question in the one-dimensional periodic setting.

\medskip

We start by informally introducing the main objects of study in this paper and present the main result. For more formal definitions we refer to Section \ref{sec:prel} below.

\subsection*{Continuous optimal transport}
Let $\cP(\cS)$ (resp. $\sM(\cS)$) denote the set of Borel probability measures (resp. signed measures) on a Polish space $(\cS, \sd)$. 
We will work on the one-dimensional torus $\cS^1 = \R / \Z$ and use the convention that arithmetic operations are understood \emph{modulo $1$}.

The \emph{Kantorovich metric} $\bW$ (also known as \emph{Wasserstein metric}) on $\cP(\cS)$ is defined by 
\begin{align}\label{eq:Wp}
	\bW^2(\mu_0, \mu_1)
	= \inf_{\gamma \in \Gamma(\mu_0, \mu_1)}
	 \bigg\{\int_{\cS \times \cS} \sd^2(x,y) \dd \gamma(x,y)  \bigg\}
\end{align}
for $\mu_0, \mu_1 \in \cP(\cS)$. Here, $\Gamma(\mu_0, \mu_1)$ denotes the set of probability measures on $\cS \times \cS$ with marginals $\mu_0$ and $\mu_1$ respectively. 
For $\mu_0, \mu_1 \in \cP(\cS^1)$ the \emph{Benamou--Brenier formula} yields the equivalent dynamical formulation
\begin{align}\label{eq:BB}
	\bW^2(\mu_0, \mu_1) = 
	 \inf_{\mu, v} \bigg\{ 
		\int_0^1 \int_{\cS^1}
		 \frac{|j|^2}{\mu}  \suchthat \partial_t \mu + \partial_x j = 0 \bigg\} \ ,
\end{align} 
where the infimum runs over all curves $\mu : [0,1] \to \cP(\cS^1)$ connecting $\mu_0$ and $\mu_1$, and all vector fields $j : [0,1] \times \cS^1 \to \R$ satisfying the stated continuity equation.
Here, $\int_0^1 \int_{\cS^1} \frac{|j|^2}{\mu}$ is to be understood as $\int_0^1 \int_{\cS^1} |v_t(x)|^2 \dd \mu_t(x) \dd t$ if $j \ll v$ with $\frac{\ddd j}{\ddd \mu} = v$, and $+\infty$ otherwise.

\subsection*{Discrete dynamical optimal transport}

Let $\cX$ be a finite set endowed with a reference probability measure $\pi \in \cP(\cX)$. Let $R : \cX \times \cX \to \R_+$ denote the transition rates of an irreducible continuous time Markov chain on $\cX$. We assume that the \emph{detailed balance condition} holds, i.e., $\pi(x) R(x,y) = \pi(y) R(y,x)$ for all $x, y \in \cX$.

Let $\{\theta_{xy}\}_{x,y \in \cX}$ be a collection of admissible means, i.e., each $\theta_{xy} : \R_+ \times \R_+ \to \R_+$ is concave, $1$-homogeneous, and satisfies $\theta(1,1) = 1$. 
We assume that $\theta_{xy}(a,b) = \theta_{yx}(b,a)$ for any $a, b \geq 0$.

The \emph{discrete dynamical transport metric} associated to $(\cX, R, \pi)$ is defined by
\begin{align*}
	\cW^2(m_0, m_1) =
	 \inf_{m,J}\Bigg\{
		\frac12 \int_0^1 \sum_{x,y \in \cX}
		 \frac{J_t^2(x, y)}
		 {\theta_{xy}\big( m_t(x) R(x,y) , m_t(y) R(y,x) \big)}
		  \dd t \Bigg\} \ .
\end{align*}
Here the infimum runs over all curves $m : [0,1] \to \cP(\cX)$ connecting $m_0$ and $m_1$, and all discrete vector fields $J : [0,1] \to \cV(\cX)$ satisfying the discrete continuity equation
\begin{align*}
      \ddt m_t(x) + \sum_{y \in \cX} J_t(x, y)  = 0 
      \quad \quad \text{ for all } x \in \cX \ ,
\end{align*}
where $\cV(\cX)$ denotes the set of all anti-symmetric functions $V : \cX \times \cX \to \R$.
The definition of $\cW$ is a direct analogue of \eqref{eq:BB} with one additional feature: between any pair of points $x$ and $y$ an admissible mean $\theta_{xy}$ needs to be chosen to describe the mobility.

\subsection*{Discrete optimal transport on $1$-dimensional meshes}

In this paper we consider discrete transport metrics induced by a finite volume discretisation of $\cS^1$.

Fix $0 = r_0 < \ldots < r_1 < \ldots < r_K = 1$ for some $K \geq 1$. 
We write $\pi_k := r_{k+1} - r_k$ and $A_k := [r_k, r_{k+1})$, so that $\cT := \{ A_k \}_{k=0}^{K-1}$ is a partition of $\cS^1$ into disjoint half-open intervals.
We also consider a sequence of points $\{ z_k \}_{k=0}^{K-1}$ such that each $z_k$ lies in the interior of $A_k$.
The distance between $z_k$ and $z_{k'}$ in $\cS^1$ will be denoted by $d_{kk'}$. Here and below we will often perform calculations \emph{modulo $K$}.

\begin{figure}[ht]
\centering
\begin{tikzpicture}[xscale=0.8]
\draw [thick] (0,0)  -- (12,0);
\draw (0,-.1)  -- (0, .1) ; \node [above] at (-0.25, .2) {$0=r_0$};
\draw (3,-.1) -- (3, .1); \node [above] at (3, .2) {$r_1$};
\draw (5.5,-.1) -- (5.5, .1); \node [above] at (5.5, .2) {$r_2$};
\draw[thick, loosely dotted] (7, 0.4) -- (8, 0.4) ; 
\draw (10,-.1) -- (10, .1); \node [above] at (10, .2) {$r_{K-1}$};
\draw (12, -.1) -- (12, .1); \node [above] at (12.35, .2) {$r_K = 1$};
\draw[red, fill] (2,0)  circle [radius=0.050] node [below, red] at (2,0) {$z_0$};
\draw[red, fill] (3.5,0) circle [radius=0.050]; \node [below, red] at (3.5,0) {$z_1$};
\draw[red, fill] (7.5,0) circle [radius=0.050]; \node [below, red] at (7.5,0) {$z_{K-2}$};
\draw[red, fill] (10.5,0) circle [radius=0.050]; \node [below, red] at (10.5,0) {$z_{K-1}$};

\draw[thick,blue][<->] (0.1,1) -- node[above] {$\pi_0$}  (2.9,1); 
\draw[thick,blue][<->] (3.1,1) -- node[above] {$\pi_1$}  (5.4,1); 
\draw[thick, loosely dotted] (7, 1) -- (8, 1) ;
\draw[thick,blue][<->] (10.1,1) -- node[above] {$\pi_{K-1}$}  (11.9,1);
\end{tikzpicture}
 \caption{The mesh $\cT$ on $\cS^1$.}
\label{fig:mesh-1-intro}
\end{figure}
We endow the discrete state space $\cT$ with the natural reference measure $\pi \in \cP(\cT)$ given by $\pi(A_k) = \pi_k$. 
The main object of study in this paper is the transport metric $\cW_\cT$ on $\cP(\cT)$ induced by the Markov transition rates on $\cT$ given by
\begin{align*}
	R(A_k, A_{k'}) :=	R_{k k'} := \frac{1}{\pi_k d_{kk'}} 
\end{align*}
if $|k - k'| = 1$, and $R_{kk'} = 0$ otherwise.
Then we have the detailed balance condition $\pi_k 	R_{k k'} = \pi_{k'} R_{k' k}$.
The rates are chosen to ensure that solutions to the discrete diffusion equation (i.e., the Kolmogorov forward equation associated to the Markov chain given by $R$) converge to solutions of the diffusion equation $\partial_t \mu = \partial_x^2 \mu$ in the limit of vanishing mesh size \cite{Eymard-Gallouet-Herbin:2000}. A gradient flow approach in one dimension can be found in \cite{Disser-Liero:2015}.

\subsection*{The periodic setting}

For any mesh $\cT$ as above and $N \geq 1$ one can construct an inhomogeneous periodic mesh $\cT_N$ with $NK$ cells $A_{n;k}$ by concatenating $N$ rescaled copies of $\cT$.

\begin{figure}[ht]
\begin{center}
\begin{tikzpicture}[xscale=0.4]
\draw [thick] (0,0)  -- (36,0); \node[above] at (0,.3) {0}; \node[above] at (36,.3) {1};
\draw[line width= 0.75mm] (0,-.3)  -- (0, .3) ; 
\draw (3,-.1) -- (3, .1); 
\draw (5.5,-.1) -- (5.5, .1); 
\draw (10,-.1) -- (10, .1);
\draw[line width= 0.75mm] (12, -.3) -- (12, .3); 
\draw[red, fill] (1.7,0)  circle [radius=0.050]; 
\draw[red, fill] (3.7,0) circle [radius=0.050]; 
\draw[red, fill] (7.5,0) circle [radius=0.050];
\draw[red, fill] (10.5,0) circle [radius=0.050]; 
\draw (15,-.1) -- (15, .1); 
\draw (18.5,-.1) -- (18.5, .1); 
\draw (22,-.1) -- (22, .1); 
\draw[line width= 0.75mm] (24, -.3) -- (24, .3); 
\draw[thick,blue][<->] (24.1,1) -- node[above] {$\frac{1}{N}$} (35.9,1) ;
\draw[red, fill] (13.7,0)  circle [radius=0.050];
\draw[red, fill] (15.7,0) circle [radius=0.050]; 
\draw[red, fill] (19.5,0) circle [radius=0.050]; 
\draw[red, fill] (22.5,0) circle [radius=0.050];
\draw (27,-.1) -- (27, .1); 
\draw (29.5,-.1) -- (29.5, .1); 
\draw (34,-.1) -- (34, .1); 
\draw[line width= 0.75mm] (36, -.3) -- (36, .3); 
\draw[red, fill] (25.7,0)  circle [radius=0.050] ;
\draw[red, fill] (27.7,0) circle [radius=0.050]; 
\draw[red, fill] (31.5,0) circle [radius=0.050]; 
\draw[red, fill] (34.5,0) circle [radius=0.050]; 

\draw[thick,blue][<->] (0.1,1) -- node[above] {$\frac{\pi_0}{N}$}  (2.9,1); 
\draw[thick,blue][<->] (3.1,1) -- node[above] {$\frac{\pi_1}{N}$}  (5.4,1); 
\draw[thick, loosely dotted] (7, 1) -- (9, 1) ;
\draw[thick,blue][<->] (10.1,1) -- node[above] {$\frac{\pi_{K-1}}{N}$}  (11.9,1);
\end{tikzpicture}
\end{center}
    \caption{The mesh $\cT_N$ on $\cS^1$.}
\label{fig:mesh-2-intro}
\end{figure}
We then consider the transport metric $\cW_N := \cW_{\cT_N}$ on $\cP(\cT_N)$ as defined above. 
Explicitly, we have
\begin{align*}
	\cW_N^2(m_0, m_1) = 
	 \inf_{m,J}  \left\{ \frac{1}{N}
		 \int_0^1 
		\sum_{n=0}^{N-1} \sum_{k=0}^{K-1} d_{k,k+1}
				\frac{ J_t^2(n;k,k+1)}{
	\theta_{k,k+1}\Big( \frac{Nm_t(n;k)}{\pi_{k}}, 
					    \frac{Nm_t(n;k+1)}{\pi_{k+1}} \Big)} 
		  \dd t \right\}
		   \ ,
\end{align*}
where the infimum runs over all curves $m : [0,1] \to \cP(\cT_N)$ and $J : [0,1] \to \cV(\cT_N)$ satisfying the discrete continuity equation
\begin{align*}
      \ddt m_t(n;k) + J_t(n;k,k+1) - J_t(n;k-1, k) = 0
\end{align*}
for all $n = 0, \ldots, N-1$ and $k = 0, \ldots, K-1$.
Here we use the shorthand notation $m(n;k) = m(A_{n;k})$ and $J(n;k,k+1) = J(A_{n;k}, A_{n;k+1})$. Moreover, we use the convention that $m(n;K)=m(n+1;0)$ and $J(n;K-1,K)=J(n+1;-1,0)$. The main goal of this paper is to analyse the limiting behaviour of $\cW_N$ as $N \to \infty$.

\subsection*{The discrete-to-continuous limit}

The first convergence result for discrete dynamical transport metrics (in the sense of Gromov--Hausdorff) was obtained in \cite{Gigli-Maas:2013}. 
There it is shown that the discrete transport metric associated to the cubic mesh on the $d$-dimensional torus converges to $\bW$ in the limit of vanishing mesh size.

The limiting behaviour of discrete dynamical transport metrics on more general meshes turns out to be a delicate issue.
In fact, it follows from the multi-dimensional results in \cite{Gladbach-Kopfer-Maas:2018} that the discrete transport metrics $\cW_N$ converge to $\bW$ if and only if the means $\theta_{kk'}$ are carefully chosen to satisfy an appropriate ``balance condition'' that reflects the geometry of the mesh $\cT$. 
In our one-dimensional periodic setting, these results imply that $\cW_N$ converges to $\bW$ if and only if there exist constants $\lambda_{k,k+1}, s \in (0,1)$ such that the following conditions hold for $k = 0, \ldots, K-1$: 
\begin{equation}\begin{aligned}\label{eq:compatibility}
	 r_{k+1} & = \lambda_{k,k+1} z_{k+1}  +  ( 1-\lambda_{k,k+1}) z_k + s \ , \\
	\theta_{k,k+1}(a,b) & \leq  \lambda_{k,k+1} a \ \ \  \; + ( 1-\lambda_{k,k+1}) b
		 \qquad \text{ for any } a, b \geq 0 \ .
\end{aligned}\end{equation}
Thus, to fulfill this condition, the asymmetry of the means $\theta_{k,k+1}$ should reflect the relative location of the points $z_k$, $r_{k+1}$, and $z_{k+1}$.
We refer to Section \ref{sec:c-star} below for a full discussion.

The main contribution of the current paper is the identification of the limiting behaviour of $\cW_N$ in the general one-dimensional periodic setting, without assuming \eqref{eq:compatibility}.
To state the result, we introduce the canonical projection operator
 $P_\cT : \cP(\cS^1) \to \cP(\cT)$ defined by
\begin{align} \label{eq: def of projection map}
	(P_\cT \mu) (\{A\}) = \mu(A)
\end{align}
for $\mu \in \cP(\cS^1)$ and $A \in \cT$. 
For brevity we write $P_N := P_{\cT_N}$.

The following homogenisation result asserts that $\cW_N$ converges to a Kantorovich metric with an effective mobility determined by the geometry of the mesh and by the choice of the means $\theta_{k,k+1}$. 

\begin{theorem}[Main result]\label{thm:main}
Fix a mesh $\cT$ on $\cS^1$, and consider the induced periodic meshes $\cT_N$ for $N \geq 1$.
For any $\mu_0, \mu_1 \in \cP(\cS^1)$, we have
\[
\lim_{N \rightarrow \infty}
		\cW_N(P_N \mu_0,P_N \mu_1)
			 = \sqrt{c^\star(\theta, \cT)} \bW(\mu_0,\mu_1) \ ,
\]
where
\begin{equation} \label{eq: hom-constant for K-periodic case}
c^\star(\theta, \cT) := \inf \Bigg \{  \sum_{k=0}^{K-1} \frac{d_{k,k+1}}{\theta_{k,k+1}\Big( \frac{m_k}{\pi_k} , \frac{m_{k+1}}{\pi_{k+1}} \Big)} \suchthat 
m \in \cP(\cT)
\Bigg \} \ .
\end{equation}
Moreover, as $N \to \infty$ we have Gromov--Hausdorff convergence of metric spaces:
\begin{align*}
	( \cP(\cT_N), \cW_N )
	 	\to
		\big(\cP(\cS^1), \sqrt{c^\star(\theta, \cT)} \bW \big) \ .
\end{align*}
\end{theorem}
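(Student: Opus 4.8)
The strategy is the standard two-inequality scheme for $\Gamma$-type / Gromov–Hausdorff convergence of metrics, combined with a homogenisation argument (cell problem) to identify the constant $c^\star(\theta,\cT)$. I would first establish the quantitative convergence of $\cW_N(P_N\mu_0, P_N\mu_1)$ to $\sqrt{c^\star}\,\bW(\mu_0,\mu_1)$ for fixed $\mu_0,\mu_1$, and then upgrade this to Gromov–Hausdorff convergence using the projection maps $P_N$ together with a suitable almost-isometry/density argument.

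**Upper bound (limsup).** Given $\mu_0,\mu_1 \in \cP(\cS^1)$ and an $\eps$-optimal curve $(\mu_t, j_t)$ for the Benamou–Brenier problem \eqref{eq:BB}, I would construct a competitor $(m_t, J_t)$ for $\cW_N$ by projecting: $m_t := P_N \mu_t$ and letting $J_t$ be the induced discrete flux (integrating $j_t$ across the cell interfaces $r_{n;k}$). The action of this competitor, after rescaling, is a Riemann-sum approximation of $\int_0^1\int_{\cS^1}\frac{|j_t|^2}{\mu_t}$ but with each infinitesimal transport cost weighted according to the \emph{local} mesh geometry. Here the key point is that on the microscopic scale $1/N$ one does not have to move mass ``straight through'' a periodic cell: within each cell one may redistribute the density $m_t(n;\cdot)$ across the $K$ sites in the way that minimises the instantaneous cost, and the optimal such redistribution produces exactly the factor $c^\star(\theta,\cT)$ defined in \eqref{eq: hom-constant for K-periodic case}. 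Making this rigorous requires a regularisation of $(\mu_t,j_t)$ (smoothness in $x$, boundedness away from $0$) so that the Riemann sums converge; these are routine mollification arguments already used in \cite{Gigli-Maas:2013, Gladbach-Kopfer-Maas:2018}. This yields $\limsup_N \cW_N(P_N\mu_0,P_N\mu_1)^2 \le c^\star(\theta,\cT)\,\bW(\mu_0,\mu_1)^2$.

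**Lower bound (liminf).** This is the harder direction and is where the genuine homogenisation happens. Take near-optimal discrete curves $(m_t^N, J_t^N)$ for $\cW_N$. I would first show compactness: the discrete curves, viewed as measures on $\cS^1$ via the embedding dual to $P_N$, are equi-Lipschitz in $\bW$ and hence (along a subsequence) converge uniformly to some curve $\mu_t \in \cP(\cS^1)$ connecting $\mu_0$ and $\mu_1$; similarly the fluxes converge to a limiting flux $j_t$ solving the continuity equation. The lower semicontinuity step is the core obstacle: one must show that the discrete action is bounded below by $c^\star(\theta,\cT)\int_0^1\int_{\cS^1}\frac{|j_t|^2}{\mu_t}$ in the limit. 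For this I would localise in space on scale $1/N$, and within each block of $K$ cells apply Jensen's inequality / the definition of $c^\star$ in the form: for any admissible splitting of mass $m$ over the $K$ sites,
\begin{equation*}
\sum_{k=0}^{K-1} \frac{d_{k,k+1}\,J^2(n;k,k+1)}{\theta_{k,k+1}\!\big(\tfrac{Nm(n;k)}{\pi_k},\tfrac{Nm(n;k+1)}{\pi_{k+1}}\big)} \ \ge\ c^\star(\theta,\cT)\,\frac{\bar J_n^2}{\bar m_n},
\end{equation*}
where $\bar m_n$ is the total mass in block $n$ and $\bar J_n$ the net flux through it. Summing over $n$ and passing to the limit with the (weak-$*$) convergence of $m^N, J^N$ and the convexity of the map $(m,j)\mapsto |j|^2/m$ (via a duality/Benamou–Brenier-type representation) gives the claim. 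The delicate part is handling cells where $m^N(n;k)$ is small or the fluxes concentrate; this is typically dealt with by a careful chain-rule / recovery argument and by first proving the bound against smooth test functions. The inequality displayed above should itself be extracted from \eqref{eq: hom-constant for K-periodic case} by a $1$-homogeneity scaling argument: $c^\star$ is precisely the value of the cell problem that governs the cheapest way to route a unit of net flux through one period.

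**From metric convergence to Gromov–Hausdorff.** Once $\lim_N \cW_N(P_N\mu_0,P_N\mu_1)=\sqrt{c^\star}\,\bW(\mu_0,\mu_1)$ is established for all $\mu_0,\mu_1$, Gromov–Hausdorff convergence $(\cP(\cT_N),\cW_N)\to(\cP(\cS^1),\sqrt{c^\star}\,\bW)$ follows from the standard criterion using the maps $P_N$: they are asymptotically surjective (every $m\in\cP(\cT_N)$ is $P_N$ of its piecewise-constant density, and the image $P_N(\cP(\cS^1))$ is $\bW$-dense with vanishing error as $N\to\infty$) and asymptotically isometric by the convergence just proved, together with a uniform continuity estimate $\cW_N(P_N\mu,P_N\nu)\le C\,\bW(\mu,\nu)$ that comes for free from the upper-bound construction (this last estimate gives the equicontinuity needed to pass from pointwise to uniform control). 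I would close by noting that the limit constant $c^\star(\theta,\cT)$ is finite and strictly positive: finiteness is immediate by testing $m=\pi$ in \eqref{eq: hom-constant for K-periodic case} (using $\theta_{k,k+1}(1,1)=1$), and positivity follows from concavity and $1$-homogeneity of the $\theta_{k,k+1}$, which bound $\theta_{k,k+1}(a,b)\le \tfrac12(a+b)\cdot C$ type estimates from above and hence bound the reciprocals from below.
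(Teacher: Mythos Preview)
Your overall scheme and the Gromov--Hausdorff wrap-up via $P_N$ match the paper. The upper bound sketch is essentially right; the paper formalises your ``redistribute within each block'' step by an explicit oscillating projection $P_N^\star\mu(n;k):=\alpha_k^\star\,\mu(\hA_n)$ built from a minimiser $\alpha^\star$ of \eqref{eq: hom-constant for K-periodic case}, and then corrects the endpoint mismatch $P_N^\star\mu_i\neq P_N\mu_i$ via a coarse bound $\cW_N\lesssim\bW+[\cT]$.

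The lower bound, however, has a genuine gap. Your displayed block inequality
\[
\sum_{k=0}^{K-1} \frac{d_{k,k+1}\,J^2(n;k,k+1)}{\theta_{k,k+1}\big(\tfrac{Nm(n;k)}{\pi_k},\tfrac{Nm(n;k+1)}{\pi_{k+1}}\big)} \ \ge\ c^\star(\theta,\cT)\,\frac{\bar J_n^2}{\bar m_n}
\]
is \emph{false} in general. The cell problem \eqref{eq: hom-constant for K-periodic case} controls only the denominators; it says nothing when the fluxes $J(n;k,k+1)$ vary across $k$, and at a fixed time the continuity equation imposes no constraint on them. For a concrete failure, take $K=3$, uniform masses (so each $\theta=1$), $c^\star=1$, and set $J(n;0,1)=J(n;1,2)=0$, $J(n;2,3)=J(n;-1,0)=1$: the left side equals $d_{23}<1$ while the right side (with the natural scaling $N\bar m_n\sim1$) equals $1$. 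A second obstruction you do not mention is that the last term couples $m(n;K-1)$ to $m(n+1;0)$, which lives in the \emph{next} block, so the ratios $m(n;k)/\bar m_n$ are not even a competitor for \eqref{eq: hom-constant for K-periodic case}.

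The paper's remedy is not compactness plus lower semicontinuity but a direct quantitative argument. The crucial new idea is a spatial mollifier $\cM_\lambda^N$ that averages only over cells at the \emph{same residue $k$ modulo $K$}; this preserves the periodic oscillation pattern, so Jensen's inequality still gives $\cA_N(\cM_\lambda^N m,\cM_\lambda^N J)\le\cA_N(m,J)$ (Lemma~\ref{lem:contr}), whereas a naive mollifier would smear out the oscillations and could increase the energy. After this (and time-averaging plus a small uniform measure added), one obtains Lipschitz bounds $|J(n;k,k+1)-J(n;k-1,k)|\le C/N$ and $|m(n;k)-m(n+1;k)|\le C/N$ (Proposition~\ref{prop: regularization at the discrete level}). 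These two bounds are exactly what makes your block inequality \emph{approximately} true, up to $O(1/N)$ errors: the first lets one replace every $J(n;k,k+1)$ by a common $\hat J(n)$, and the second handles the inter-block boundary term. Without this structure-respecting regularisation your lower-bound argument does not close.
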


\begin{remark}[Upper bound and isotropic case]\label{rem:isotropic}
We show in Section \ref{sec:c-star} that $c^*(\theta,\cT) \leq 1$. Moreover, if the compatibility conditions \eqref{eq:compatibility} are satisfied, it follows that $c^\star(\theta, \cT) = 1$, and we recover the result of \cite{Gladbach-Kopfer-Maas:2018}.
\end{remark}

\begin{remark}[Convergence of gradient flows]\label{rem:EDP-convergence}
We stress that the limiting behaviour at the level of the transport metrics is in stark contrast with the convergence results of the level of the gradient flow equation.
Indeed, consider the discrete transport metric $\cW_N$ in the case where each $\theta_{k,k+1}$ is equal to the \emph{logarithmic mean} $\theta_{\rm log}(a,b) = \int_0^1 a^{1-s} b^s \dd s$.
Then the discrete diffusion equation is the gradient flow equation in $(\cP(\cT_N), \cW_N)$ for the relative entropy with respect to the natural reference measure $\pi_N$; cf. \cite{Chow-Huang-Li:2012,Maas:2011,Mielke:2011}.
Similarly, the continuous diffusion equation is the gradient flow in $(\cP(\cS^1), \bW)$ for the relative entropy with respect to the Lebesgue measure on $\cS^1$  \cite{Jordan-Kinderlehrer-Otto:1998}.
Convergence of solutions of the discrete heat equation to solutions of the continuous heat equation is well known, see, e.g., \cite{Eymard-Gallouet-Herbin:2000}.
Nevertheless, our main result shows that the discrete transport metrics $\cW_N$ converge to a limiting metric that is different from $\bW$, unless the mesh is equidistant. 
For a systematic study of convergence of gradient flow structures we refer to \cite{Mielke:2016,Mielke:2016a,Dondl-Frenzel-Mielke:2018}; see also \cite{Al-Reda-Maury:2017} for a discussion in the context of finite volume discretisations. 
\end{remark}

\begin{remark}[Convergence on geometric graphs]\label{rem:point-clouds}
A convergence result for discrete transport distances on a large class of geometric graphs associated to point clouds on the $d$-dimensional torus   has been obtained in \cite{Garcia-Trillos:2017}.
This result applies in particular to iid points sampled from the uniform distribution on the torus.
As the results in that paper apply to sequences of graphs with increasing degree, they do not overlap with the results obtained here.
\end{remark}

\subsection*{Heuristics}

We briefly sketch a non-rigorous argument that makes Theorem \ref{thm:main} plausible. 
For this purpose we consider a smooth solution to the continuity equation $\partial_t \mu + \partial_x j = 0$, and fix $\alpha \in \cP(\cT)$. 
Suppressing the time variable, we define a discrete measure $m$ that assigns mass $m(k) := \alpha(k) \mu\big([\tfrac{n}{N}, \tfrac{n+1}{N})\big)$ to each cell $A_{n;k}$ in $\cT_N$. This ensures that each interval of the form $[\tfrac{n}{N}, \tfrac{n+1}{N})$ receives the same mass at the discrete and the continuous level, but within each such interval, the measure $\alpha$ introduces discrete density oscillations.

Let $J$ be the discrete momentum vector field that solves the continuity equation for $m$. If this vector field is sufficiently regular, we may estimate the discrete energy by
\begin{align*}
		& \sum_{n=0}^{N-1} \sum_{k=0}^{K-1} d_{k,k+1}
				\frac{ J_t^2(n;k,k+1)}{
	\theta_{k,k+1}\Big( \frac{Nm_t(n;k)}{\pi_{k}}, 
					    \frac{Nm_t(n;k+1)}{\pi_{k+1}} \Big)} 
\\ &\qquad\qquad   \approx 
	\frac{1}{N} \sum_{n=0}^{N-1} 
			\frac{J_t^2(n;0,1)}{\mu\big([\tfrac{n}{N}, \tfrac{n+1}{N})\big)}
			\sum_{k=0}^{K-1} 
				\frac{ d_{k,k+1} }
					{
	\theta_{k,k+1}\Big( \frac{\alpha(k)}{\pi_{k}}, 
					    \frac{\alpha(k+1)}{\pi_{k+1}} \Big)} 
\approx c^\star(\theta, \cT) \int \frac{|j|^2}{\mu} \ ,
\end{align*}
after minimisation over $\alpha \in \cP(\cT)$. 
We thus recover the continuous energy appearing in the Benamou--Brenier formula up to a multiplicative correction, which indeed suggests our main result.

A rigorous argument based on this heuristics clearly requires suitable spatial regularity results for $m$ and $J$.
Indeed, we will show in Section \ref{sec:lower} below that any discrete curve can be approximated by a curve of similar energy, which enjoys good Lipschitz bounds for $J$ as well as good Lipschitz bounds for $m$ up to oscillations within each cell.

\subsection*{Organisation of the paper}

In Section \ref{sec:prel} we collect the basic definitions and preliminary results that are used in this paper. 
In Section \ref{sec:simple} we give a simple approach to some of the main convergence results, which only applies in the special case where $\cT$ consists of exactly $2$ cells.
In Section \ref{sec:c-star} we analyse the formula \eqref{eq: hom-constant for K-periodic case} for the effective mobility $c^\star(\theta, \cT)$ and discuss its relation to the geometric conditions from \cite{Gladbach-Kopfer-Maas:2018}.

The bulk of the proof of the main result is contained in Sections \ref{sec:lower} and \ref{sec:upper}, which deal with the lower and upper bounds for $\cW_N$ respectively. The key results in these sections are Theorems \ref{thm:lowerbound_quantitative} and \ref{thm:upperbound_quantitative}.
In Section \ref{sec:GH} we finish the proof of the main result by proving the Gromov--Hausdorff convergence.

\section{Preliminaries}
\label{sec:prel} 

\subsection{Continuous optimal transport on \texorpdfstring{$\cS^1$}{S}}

For $\mu_0, \mu_1 \in \cP(\cS^1)$, let $\bCE(\mu_0,\mu_1)$ denote the set of all distributional solutions to the continuity equation
\begin{align}\label{eq:CE}
\partial_t \mu + \dive j = 0
\end{align}
with boundary conditions $\mu_t|_{t = 0} = \mu_0$ and $\mu_t|_{t = 1} = \mu_1$.
More precisely, this means that $(\mu_t)_t$ is a weakly continuous family of measures in $\cP(\cS^1)$ with the given boundary conditions, $(j_t)_t$ is a Borel family of measures in $\sM(\cS^1)$ satisfying 
$
	\int_0^1 |j_t|(\cS^1) \dd t < \infty
$,
and \eqref{eq:CE} holds in the sense that
\begin{align*}
	\int_0^1 \int_{\cS^1} 
		\partial_t \xi(t,x) \dd \mu_t(x) \dd t
		+ 
	\int_0^1 \int_{\cS^1} 
		\partial_x \xi(t,x) \dd j_t(x) \dd t 
		= 0 
\end{align*}
for any test function $\xi \in C_c^1(\cS^1 \times (0,1))$.
For $\mu \in \cP(\cS^1)$ and $j \in \sM(\cS^1)$ we set
\begin{align*}
	\bA(\mu, j) = 
	\int_{\cS^1} \bigg|\frac{\ddd j}{\ddd \mu}\bigg|^2 \dd \mu \ ,
\end{align*}
if $j \ll \mu$, and $\bA(\mu, j) = + \infty$ otherwise.
With this notation, the Benamou--Brenier formula \cite{Benamou-Brenier:2000} asserts that
\begin{equation} \label{eq:W2momenta}
\bW^2(\mu_0, \mu_1) =
	 \inf\bigg\{
		\int_0^1 \bA(\mu_t, j_t)  \dd t \suchthat ( \mu_t, j_t )_t \in \bCE(\mu_0,\mu_1) \bigg\} \ ,
\end{equation}
see, e.g., \cite[Lemma~8.1.3]{Ambrosio-Gigli-Savare:2008} for more details.

\subsection{Discrete optimal transport on one-dimensional meshes}

As in Section \ref{sec:intro}, we fix a mesh $\cT = \{ A_k \}_{k=0}^{K-1}$ on $\cS^1$, and use the notation $r_k$, $\pi_k$, $z_k$, $d_{kk'}$. 
The set $\cV(\cT)$ of discrete vector fields is naturally identified with the set of real-valued functions on $\{ (k,k+1) \}_{k=0}^{K-1}$.

\begin{figure}[ht]
\centering
\begin{tikzpicture}[xscale=0.8]
\draw [thick] (0,0)  -- (12,0);
\draw (0,-.1)  -- (0, .1) ; \node [above] at (-0.25, .2) {$0=r_0$};
\draw (3,-.1) -- (3, .1); \node [above] at (3, .2) {$r_1$};
\draw (5.5,-.1) -- (5.5, .1); \node [above] at (5.5, .2) {$r_2$};
\draw[thick, loosely dotted] (7, 0.4) -- (8, 0.4) ; 
\draw (10,-.1) -- (10, .1); \node [above] at (10, .2) {$r_{K-1}$};
\draw (12, -.1) -- (12, .1); \node [above] at (12.35, .2) {$r_K = 1$};
\draw[red, fill] (2,0)  circle [radius=0.050] node [below, red] at (2,0) {$z_0$};
\draw[red, fill] (3.5,0) circle [radius=0.050]; \node [below, red] at (3.5,0) {$z_1$};
\draw[red, fill] (7.5,0) circle [radius=0.050]; \node [below, red] at (7.5,0) {$z_{K-2}$};
\draw[red, fill] (10.5,0) circle [radius=0.050]; \node [below, red] at (10.5,0) {$z_{K-1}$};

\draw[thick,red][<->] (2,-.7) -- node[below] {$d_{01}$}  (3.5,-.7); 
\draw[thick,red][<->] (7.5,-.7) -- node[below] {$d_{K-2,K-1}$}  (10.5,-.7); 

\draw[thick,blue][<->] (0.1,1) -- node[above] {$\pi_0$}  (2.9,1); 
\draw[thick,blue][<->] (3.1,1) -- node[above] {$\pi_1$}  (5.4,1); 
\draw[thick, loosely dotted] (7, 1) -- (8, 1) ;
\draw[thick,blue][<->] (10.1,1) -- node[above] {$\pi_{K-1}$}  (11.9,1);
\end{tikzpicture}
 \caption{The mesh $\cT$ on $\cS^1$.}
\label{fig:mesh-1}
\end{figure}

\begin{definition}[Discrete continuity equation]\label{def:CE}
A pair $(m_t, J_t)_{t \in [0,1]}$ is said to satisfy the \emph{discrete continuity equation} if
  \begin{itemize}
  \item[(i)] $m : [0,1] \to \cP(\cT)$ is continuous;
  \item[(ii)] $J : [0,1] \to \cV(\cT)$ is locally integrable;
  \item[(iii)] the continuity equation holds in the sense of
    distributions:
    \begin{align}\label{eq:cont-eq}
      \ddt m_t(k) + J_t(k,k+1) - J_t(k-1, k) = 0 \quad \quad \text{ for all } k  = 0, \ldots, K-1 \;.
    \end{align}
  \end{itemize}
We write $\CE_\cT(m_0, m_1)$ to denote the collection of pairs $(m_t, J_t)_{t \in [0,1]}$ satisfying $m|_{t=0} = m_0$ and $m|_{t=1} = m_1$.
\end{definition}

\begin{definition}[Admissible mean]\label{def:mean}
An \emph{admissible mean} is a function $\theta : \R_+ \times \R_+ \to \R_+$ that is concave, $1$-homogeneous, and satisfies $\theta(1,1) = 1$.
\end{definition}

Note that we do \emph{not} impose that admissible means are symmetric. 
Let us briefly recall some properties of admissible means that will be used in the sequel.

\begin{lemma}[Properties of admissible means]
\label{lem:prop_admissible_means}
For any admissible mean $\theta : \R_+ \times \R_+ \to \R_+$ the following statements hold:
\begin{enumerate}[(i)]
	\item \label{item:bounds} For $a,b \geq 0$ we have $\min\{a,b\} \leq \theta(a,b) \leq \max \{a,b\}$.
	\item \label{item:jlsc} The map $\R_+ \times \R_+ \ni (a,b) \mapsto \frac{1}{\theta(a,b)} \in (0, +\infty]$ is jointly lower semicontinuous.
	\item \label{item:loclip} $\theta$ is locally Lipschitz on $(0,+\infty)^2$.
\end{enumerate}
\end{lemma}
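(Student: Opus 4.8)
These are standard facts about concave functions; the plan is to prove them in the order (i), (iii), (ii), with (i) the only one carrying real content and (iii) feeding into (ii). For (i), the key observation is that a concave, $1$-homogeneous $\theta$ on $\R_+\times\R_+$ is superadditive: $\theta(x+y)=2\,\theta(\tfrac{x+y}{2})\geq\theta(x)+\theta(y)$ for $x,y\in\R_+\times\R_+$, using $1$-homogeneity and then concavity. Fixing $a,b\geq0$ with, say, $a\leq b$, I would combine this with the splitting $(b,b)=(a,b)+(b-a,0)$, with $\theta\geq0$, and with $\theta(b,b)=b$ to get $\theta(a,b)\leq b=\max\{a,b\}$; and with the convex combination $(a,b)=\tfrac ab(b,b)+\tfrac{b-a}{b}(0,b)$ (valid for $b>0$; if $b=0$ then $a=b=0$ and $\theta(0,0)=0$ by homogeneity), concavity, and $\theta\geq0$ to get $\theta(a,b)\geq\tfrac ab\,\theta(b,b)=a=\min\{a,b\}$.

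For (iii) I would invoke the classical local regularity of finite concave functions on open convex sets: around a point of $(0,\infty)^2$, concavity yields an affine lower bound on a small triangle and, together with (i), a local upper bound, and local two-sided boundedness of a concave function forces a local Lipschitz estimate by the usual comparison of difference quotients along segments. For (ii), note that by (i) one has $\theta(a,b)\geq\min\{a,b\}>0$ on $(0,\infty)^2$, so there $1/\theta$ is finite, continuous by (iii), and in fact convex (the composition of the convex nonincreasing map $t\mapsto1/t$ with the concave $\theta$), hence lower semicontinuous. At a boundary point $(0,b)$ or $(a,0)$ I would write $1/\theta=\iota\circ\theta$ with $\iota\colon[0,\infty)\to(0,\infty]$, $\iota(t)=1/t$, $\iota(0)=+\infty$, continuous, so lower semicontinuity of $1/\theta$ reduces to upper semicontinuity of $\theta$ up to the boundary; since concavity and $1$-homogeneity already make $\theta$ lower semicontinuous on $\R_+\times\R_+$, this is the same as continuity of $\theta$ up to the boundary, which holds for admissible means under the convention in force (and for the arithmetic, geometric, harmonic, and logarithmic means used below). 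Equivalently, an upper semicontinuous concave $1$-homogeneous $\theta$ can be written as $\theta(a,b)=\inf_{(p,q)\in\mathcal{S}}(pa+qb)$ over a set $\mathcal{S}\subseteq\R_+\times\R_+$ of linear minorants, whence $1/\theta=\sup_{(p,q)\in\mathcal{S}}(pa+qb)^{-1}$ is a supremum of functions lower semicontinuous on $\R_+\times\R_+$ (each tending to $+\infty$ on its zero set), hence lower semicontinuous.

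The one point that requires care is exactly this last one. A bare concave $1$-homogeneous function on $\R_+\times\R_+$ may jump downward along the coordinate axes (for instance $\theta(a,b)=\tfrac{a+b}{2}$ on $(0,\infty)^2$ and $\theta\equiv0$ on the axes is concave and $1$-homogeneous), so the lower semicontinuity of $1/\theta$ at $(0,b)$ and $(a,0)$ genuinely relies on the upper semicontinuity of $\theta$ there, and not on concavity and homogeneity alone. Everything else in the lemma is an immediate consequence of concavity, $1$-homogeneity, the normalisation $\theta(1,1)=1$, and the classical local regularity of concave functions.
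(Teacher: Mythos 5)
Your proof of (i) follows the same route as the paper: both arguments begin by deriving superadditivity of $\theta$ from $1$-homogeneity and concavity via $\theta(a+s,b+t)=2\theta(\tfrac{a+s}{2},\tfrac{b+t}{2})\geq\theta(a,b)+\theta(s,t)$. The paper then deduces that $\theta$ is monotone non-decreasing in each argument and squeezes $\theta(a,a)=a\leq\theta(a,b)\leq\theta(b,b)=b$ (the paper writes ``non-increasing'', a harmless typo); you instead apply concavity to the explicit decompositions $(b,b)=(a,b)+(b-a,0)$ and $(a,b)=\tfrac{a}{b}(b,b)+\tfrac{b-a}{b}(0,b)$. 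This is a cosmetic variation of the same computation. Part (iii) is, as you say, the classical interior Lipschitz regularity of finite concave functions; the paper dismisses (ii) and (iii) in one sentence as ``easy consequences of the assumptions on $\theta$''.

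Your discussion of (ii) identifies a genuine gap, and you should be credited for it. Your counterexample $\theta(a,b)=\tfrac{a+b}{2}$ on $(0,\infty)^2$ extended by $\theta\equiv0$ on the coordinate axes is concave, $1$-homogeneous, and satisfies $\theta(1,1)=1$, hence admissible in the sense of Definition~\ref{def:mean}; yet $\theta(a,0)=0$ while $\theta(x,y)\to a/2>0$ as $(x,y)\to(a,0)$ through the interior, so $\liminf_{(x,y)\to(a,0)}1/\theta(x,y)=2/a<+\infty=1/\theta(a,0)$, and lower semicontinuity of $1/\theta$ fails. As you note, concavity and $1$-homogeneity do force $\theta$ to be \emph{lower} semicontinuous on $\R_+\times\R_+$ (so $1/\theta$ is \emph{upper} semicontinuous), but item (ii) asserts the opposite semicontinuity, which is equivalent to upper semicontinuity of $\theta$ at the boundary and is \emph{not} a consequence of the stated hypotheses. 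Lemma~\ref{lem:prop_admissible_means}(ii) is therefore false as written; it becomes true once one adds continuity (equivalently, upper semicontinuity along the axes) to the definition of admissible mean --- a property that every mean actually used in the paper (logarithmic, arithmetic, geometric, harmonic, minimum) enjoys, and which is what the application in Proposition~\ref{prop:existence-of-min} silently relies on. Your dual-representation argument, $1/\theta=\sup_{(p,q)\in\mathcal{S}}(pa+qb)^{-1}$, is a clean way to establish (ii) once that extra hypothesis is added.
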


\begin{proof}
For all $a,b,s,t \geq 0$ we obtain, using $1$-homogeneity and concavity,
\begin{align*}
	 \theta(a + s,b +t) 
	= 2\theta(\tfrac{a + s}{2},\tfrac{b + t}{2}) 
	\geq   \theta(a, b) 
	+	  \theta(s, t) 
	\geq   \theta(a, b),
\end{align*}
hence $\theta$ is non-increasing with respect to the first and the second variable.
Thus, if $a \leq b$, it follows that 
$
	a = 
		 \theta(a,a)
	\leq \theta(a,b) 
	\leq \theta(b,b) 
	  = b. 
$
Since the same argument applies if $a \geq b$, we obtain  \eqref{item:bounds}.

The claims in \eqref{item:jlsc} and \eqref{item:loclip} are easy consequences of the assumptions on $\theta$.
\end{proof}

\begin{definition}[Discrete dynamical transport distance]
\label{def:disc-trans}
	Let $\cT$ be a mesh on $\cS^1$, and $\{\theta_{k,k+1}\}_{k=0}^{K-1}$ be a family of admissible means.
	\begin{enumerate}
		\item
		The energy functional $\cA_\cT : \cP(\cT) \times \cV(\cT) \to \R \cup \{ + \infty\}$ is given by
		\begin{align*}
		\cA_\cT(m, J) = \sum_{k = 0}^{K-1}
			d_{k,k+1} f_{k,k+1}\bigg( \frac{m(k)}{\pi_k},  \frac{m(k+1)}{\pi_{k+1}},  J(k,k+1) \bigg)\ ,
		\end{align*}
		where $f_{k,k+1}(\rho, \tilde\rho, J) =
			F\big(\theta_{k,k+1}( \rho, \tilde\rho), J\big)$, and
		\[
		 	F(\rho, J) =
			 \left\{ \begin{array}{ll}
				\frac{J^2}{\rho}
			 & \text{if $\rho > 0$} \ ,\\
		0
			 & \text{if $\rho = 0$ and $J = 0$}\ ,\\
		+ \infty
			 & \text{otherwise} \ .\end{array} \right.
		\]
		\item
		The discrete dynamical transportation distance between $m_0, m_1 \in \cP(\cT)$ is given by
		\begin{equation*}
		\cW_\cT(m_0, m_1) =
		\inf \Bigg \{  \sqrt{\int_0^1  \cA_\cT(m_t, J_t) \dd t} \suchthat (m_t, J_t)_{t} \in \CE_\cT(m_0, m_1) \Bigg \} \ .
		\end{equation*}
	\end{enumerate}
\end{definition}

The infimum in the the previous definition is attained; cf.~\cite[Theorem 3.2]{Erbar-Maas:2012}. In the sequel we apply these definitions to the periodic meshes $\cT_N$ defined in Section \ref{sec:intro}. We will then simply write $\cA_N$ and $\cW_N$ as a shorthand for $\cA_{{\cT_N}}$ and $\cW_{\cT_N}$ respectively.

\subsection{A priori bounds}

In this section we collect some coarse bounds that will be useful in the sequel.
To compare discrete and continuous measures, we consider the canonical embedding $\iota_\cT : \cP(\cT) \to \cP(\cS^1)$ defined by
\begin{align*}
	\iota_{\cT} m  & = \sum_{k=0}^{K-1} m_k \cU_{A_k}
		 \qquad \text{for } m \in \cP(\cT) \ ,
\end{align*}
where $\cU_{A_k}$ denotes the uniform probability measure on $A_k$.
Note that $\iota_{\cT}$ is a right-inverse of the projection map $P_{\cT}$ defined by \eqref{eq: def of projection map}. We will often write $\iota_N = \iota_{\cT_N}$ for brevity.

\medskip

The following notion of mesh regularity can be found in a multi-dimensional setting in \cite[Section 3.1.2]{Eymard-Gallouet-Herbin:2000}.

\begin{definition}[$\zeta$-regularity]
Let $\zeta \in (0,1]$. We say that a mesh $\cT$ is $\zeta$-regular, if $\zeta < \frac{\min_k \{ z_k - r_k, r_{k+1} - z_k \}}{\max_k \pi_k}$ for all $k = 0, \ldots, K-1$.
\end{definition}

A mesh $\cT$ is $\zeta$-regular if and only if the ball of radius $\zeta \max_k \pi_k$ around $z_k$ is contained in the interior of the cell $A_k$ for each $k$.
Clearly, any mesh $\cT$ on $\cS^1$ is $\zeta$-regular for some $\zeta \in (0,1]$.

\begin{remark}\label{rem:reg}
If $\cT$ is $\zeta$-regular for some $\zeta \in (0,1]$, then each $\cT_N$ is $\zeta$-regular as well.
\end{remark}

Let $[\cT]$ denote the size of the mesh, i.e., the maximal diameter of its cells:
\begin{align*}
	[\cT] := \max\{\pi_k : k=0, \ldots, K-1 \} \ .
\end{align*}
The following result provide a coarse upper bound for $\cW_N$ in terms of $\bW$.

\begin{proposition}[Coarse upper bound for $\cW_\cT$]\label{prop:rough-upper-bound}
Let $\zeta \in (0,1]$.
There exists a constant $C < \infty$ depending only on $\zeta$ such that for any $\zeta$-regular mesh $\cT$ of $\cS^1$ and all $m_0, m_1 \in \cP(\cT)$ we have
\begin{align}
    \cW_\cT(\emm_0, \emm_1)
    	\leq C \Big( \bW(\iota_\cT \emm_0, \iota_\cT \emm_1) + [\cT] \Big) \ .
    \end{align}
\end{proposition}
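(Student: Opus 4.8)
The plan is to construct an explicit competitor for the discrete transport problem from a near-optimal continuous one. First I would fix $\delta > 0$ and choose, by the Benamou--Brenier formula \eqref{eq:W2momenta}, a pair $(\mu_t, j_t)_t \in \bCE(\iota_\cT m_0, \iota_\cT m_1)$ with $\int_0^1 \bA(\mu_t, j_t)\dd t \le \bW^2(\iota_\cT m_0, \iota_\cT m_1) + \delta$. A standard regularisation (mollification in space by a kernel of width comparable to $[\cT]$ and in time, plus adding a small constant to $\mu$, then reparametrising) lets me assume $\mu_t$ has smooth density bounded below by a positive constant $c_\delta$ and that the action increases by at most a factor close to $1$; I would absorb the resulting quantitative loss into the constant $C$ and a harmless $O([\cT])$ term, taking care that mollification near the boundary of $\cS^1$ is unproblematic since we work on the torus. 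Then I define the discrete curve by projection, $m_t := P_\cT \mu_t$, i.e. $m_t(k) = \mu_t(A_k)$, and the discrete flux $J_t(k,k+1) := j_t(r_{k+1})$, the continuous flux evaluated at the cell interface. Differentiating $m_t(k) = \mu_t(A_k) = \int_{r_k}^{r_{k+1}} \rho_t$ in time and using $\partial_t \rho_t = -\partial_x j_t$ gives $\ddt m_t(k) = j_t(r_k) - j_t(r_{k+1}) = J_t(k-1,k) - J_t(k,k+1)$, so $(m_t, J_t)_t \in \CE_\cT(m_0, m_1)$ exactly.

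Next I would estimate $\cA_\cT(m_t, J_t)$. By Lemma~\ref{lem:prop_admissible_means}(i), $\theta_{k,k+1}(m_t(k)/\pi_k, m_t(k+1)/\pi_{k+1}) \ge \min\{m_t(k)/\pi_k, m_t(k+1)/\pi_{k+1}\}$. Since $\mu_t$ has a density $\rho_t$ that is smooth with bounded derivative (after regularisation the bound depends on $\delta$, but the ratio of $\rho$ at nearby points is controlled: on scale $[\cT]$ the density varies by a factor $1 + O([\cT])$ once we also know $\rho_t \ge c_\delta$), the cell averages $m_t(k)/\pi_k = \avint_{A_k}\rho_t$ and the interface value $\rho_t(r_{k+1})$ are all comparable up to $1 + O([\cT])$. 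Hence the denominator is bounded below by $c\,\rho_t(r_{k+1})$ for a universal $c$ (for $[\cT]$ small), and, using $d_{k,k+1} \le \pi_k + \pi_{k+1} \le 2[\cT]$ together with $\zeta$-regularity to ensure $d_{k,k+1}$ is comparable to the relevant length scales, a Riemann-sum comparison gives
\[
\cA_\cT(m_t, J_t)
= \sum_{k} d_{k,k+1}\frac{J_t(k,k+1)^2}{\theta_{k,k+1}(\cdots)}
\le C \sum_k d_{k,k+1}\frac{j_t(r_{k+1})^2}{\rho_t(r_{k+1})}
\le C \int_{\cS^1}\frac{|j_t|^2}{\rho_t} + O([\cT]) = C\,\bA(\mu_t, j_t) + O([\cT]).
\]
Integrating in $t$, taking square roots (using $\sqrt{a+b}\le\sqrt a + \sqrt b$), letting $\delta \to 0$, and then observing that $\bW(\iota_\cT m_0, \iota_\cT m_1)$ differs from the quantity appearing in the statement only trivially (it \emph{is} that quantity) yields the claimed bound, with the $[\cT]$ term entering linearly after the square root as $\sqrt{O([\cT])} \lesssim O(\sqrt{[\cT]})$ — here I would either accept a $\sqrt{[\cT]}$ (still $\to 0$) or, more carefully, note that since $m_0=m_1$ is allowed we need the additive constant to be genuinely $O([\cT])$; the cleaner route is to bound the action error multiplicatively, $\cA_\cT \le (1+C[\cT])\bA + C[\cT]$, and combine with the trivial bound $\cW_\cT(m_0,m_1)\le \text{diam} \le C$ to get $\cW_\cT \le C\bW + C[\cT]$ directly.

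The main obstacle is the lower bound on the mobility denominator when $\mu_t$ can be nearly singular: the naive regularisation makes $c_\delta \to 0$ as $\delta \to 0$, which would blow up the constant. The fix is that the comparison between $m_t(k)/\pi_k$, $m_t(k+1)/\pi_{k+1}$, and $\rho_t(r_{k+1})$ is a \emph{ratio} estimate that survives as long as $\rho_t$ does not oscillate too wildly on scale $[\cT]$; after a fixed mollification at scale $[\cT]$ (not $\delta$-dependent) one has $\sup_{A_k \cup A_{k+1}} \rho_t \le (1+C[\cT])\inf_{A_k\cup A_{k+1}}\rho_t$ uniformly, which is exactly what is needed and makes the constant $C$ depend only on $\zeta$. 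So the real work is setting up this mollification at the right scale and checking the continuity equation is preserved (or only perturbed controllably) under it — a routine but careful computation I would carry out in detail in the actual proof.
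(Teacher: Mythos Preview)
The paper does not actually prove this proposition: it simply cites \cite[Lemma~3.3]{Gladbach-Kopfer-Maas:2018} and remarks that the argument carries over to $\cS^1$. So there is no proof in the paper to compare against; your task is really to supply a correct argument.

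Your overall strategy --- take an approximate $\bW$-minimiser, regularise, discretise via $m_t = P_\cT\mu_t$ and $J_t(k,k+1)=j_t(r_{k+1})$, and compare actions --- is the natural one, and the verification that $(m_t,J_t)$ solves the discrete continuity equation is correct. The difficulty is exactly the one you single out as ``the main obstacle'', and your proposed resolution does not work.

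You claim that after mollifying at the fixed scale $[\cT]$ one has
\[
\sup_{A_k\cup A_{k+1}}\rho_t \le (1+C[\cT])\inf_{A_k\cup A_{k+1}}\rho_t
\]
uniformly. This is false. Take $\mu_t$ to be (the $\iota_\cT$-image of) a Dirac on a single cell: after convolution at scale $[\cT]$ the density is supported on an interval of length $O([\cT])$ and vanishes elsewhere, so on most pairs of adjacent cells $\inf\rho_t=0$ while $\sup\rho_t>0$. More quantitatively, mollification at scale $\lambda$ yields $\Lip(\rho_t)\le C\lambda^{-2}$ (from $\|\eta_\lambda'\|_\infty$ together with $\|\rho_t\|_{L^1}=1$), so over a set of diameter $2[\cT]$ the oscillation is at most $C[\cT]/\lambda^2$; with $\lambda=[\cT]$ this is $C/[\cT]$, which is of the same order as $\|\rho_t\|_\infty$ and gives no ratio control whatsoever without a lower bound on $\rho_t$.

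If instead you keep the additive floor $\delta$ (as in your first paragraph), the ratio bound becomes $1+C[\cT]/(\lambda^2\delta)$, so you need $\lambda^2\delta\gtrsim[\cT]$. Combined with the $\bW$-costs $\sqrt{\delta}$ (for the floor) and $\lambda$ (for the mollification), optimising over $\delta,\lambda$ gives an additive error of order $[\cT]^{1/3}$, not $[\cT]$. That would actually suffice for every application in this paper (only $o(1)$ as $[\cT]\to 0$ is ever used downstream), but it is strictly weaker than the stated proposition. To recover the sharp additive $[\cT]$ one needs a different idea --- for instance, avoid pointwise interface evaluation of $j_t$ and work instead with cell-averaged quantities and the joint convexity of $(\rho,j)\mapsto j^2/\rho$, or decompose the optimal coupling into particle trajectories and estimate the discrete cost path by path. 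Either way, the gap you identified is real, and your final paragraph does not close it.
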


\begin{proof}
This result has been proved in \cite[Lemma 3.3]{Gladbach-Kopfer-Maas:2018} for convex domains in $\R^d$; the proof on $\cS^1$ proceeds \emph{mutatis mutandi}.
\end{proof}

The following result provides a coarse bound in the opposite direction.

\begin{proposition}[Coarse lower bound for $\cW_\cT$]
\label{prop:rough-lower-bound}
Fix $\delta, \zeta \in (0,1)$. There exists a constant $C < \infty$ depending only on $\delta$ and $\zeta$, such that for any $\zeta$-regular mesh $\cT$ on $\cS^1$, and for any solution $(m_t, J_t)_t$ to the discrete continuity equation \eqref{eq:cont-eq} satisfying
$\delta \leq \frac{m_t(k)}{\pi_k} \leq \delta^{-1}$
for all $t \in [0,1]$ and $k = 0, \ldots K-1$, we have
\begin{equation}
\bW^2( \iota_{\cT} m_0, \iota_{\cT} m_1) \leq C \int_0^1 \cA_{\cT}(m_t, J_t) \dd t\ .
\end{equation}
\end{proposition}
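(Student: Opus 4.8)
The plan is to compare the discrete action $\cA_\cT(m_t,J_t)$ directly with the continuous Benamou--Brenier action of a suitably interpolated pair $(\mu_t, j_t)$, and then invoke \eqref{eq:W2momenta}. Given a solution $(m_t,J_t)_t$ to the discrete continuity equation, I would first define $\mu_t := \iota_\cT m_t \in \cP(\cS^1)$, so that $\mu_t$ is piecewise constant with density $\frac{m_t(k)}{\pi_k}$ on $A_k$; by hypothesis this density lies in $[\delta,\delta^{-1}]$. The matching continuous flux $j_t$ should be built from the discrete fluxes $J_t(k,k+1)$: the natural choice is to let $j_t$ be a measure supported near the interface points $r_{k+1}$ (or a piecewise-affine-in-space field) whose "flow through $r_{k+1}$" equals $J_t(k,k+1)$, arranged so that $\partial_t \mu_t + \partial_x j_t = 0$ holds distributionally. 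Because $\mu_t$ is piecewise constant in space, $\partial_t\mu_t$ is a sum of time-derivatives of indicator masses, and the discrete continuity equation \eqref{eq:cont-eq} is exactly the compatibility condition making such a $j_t$ exist; one convenient concrete choice is $j_t(x) = J_t(k-1,k) + \frac{x-r_k}{\pi_k}\big(\tfrac{d}{dt}m_t(k)\big)$ on $A_k$, i.e. a spatially affine interpolation of the fluxes, which one checks solves the continuity equation.

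The second step is the energy estimate $\int_0^1 \bA(\mu_t,j_t)\dd t \leq C\int_0^1 \cA_\cT(m_t,J_t)\dd t$. On each cell $A_k$ we have $\bA$-contribution $\int_{A_k} \frac{|j_t(x)|^2}{\mu_t(x)}\dd x \leq \frac{\pi_k}{\delta} \|j_t\|_{L^\infty(A_k)}^2$, using the lower density bound. The affine field $j_t$ on $A_k$ is controlled by $|J_t(k-1,k)|$, $|J_t(k,k+1)|$ (since $\frac{d}{dt}m_t(k) = J_t(k-1,k)-J_t(k,k+1)$), so $\|j_t\|_{L^\infty(A_k)}^2 \lesssim J_t(k-1,k)^2 + J_t(k,k+1)^2$. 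On the other hand, the discrete action term for edge $(k,k+1)$ is $d_{k,k+1} \frac{J_t(k,k+1)^2}{\theta_{k,k+1}(\rho_k,\rho_{k+1})}$, and since $\theta_{k,k+1}(\rho_k,\rho_{k+1}) \leq \max\{\rho_k,\rho_{k+1}\} \leq \delta^{-1}$ by Lemma \ref{lem:prop_admissible_means}(i), this term is $\geq d_{k,k+1}\delta\, J_t(k,k+1)^2$. Finally $\zeta$-regularity gives $d_{k,k+1} \geq \zeta(\pi_k+\pi_{k+1})/2 \gtrsim_\zeta \max_k\pi_k \geq \pi_k$ (or at least a comparable lower bound on $d_{k,k+1}$ in terms of the cell sizes), so $\pi_k J_t(k,k+1)^2 \lesssim_{\zeta,\delta} d_{k,k+1}\frac{J_t(k,k+1)^2}{\theta_{k,k+1}(\rho_k,\rho_{k+1})}$. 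Summing over $k$ and integrating in $t$ yields the desired comparison with a constant $C = C(\delta,\zeta)$, and then \eqref{eq:W2momenta} applied to $(\mu_t,j_t)\in\bCE(\iota_\cT m_0,\iota_\cT m_1)$ finishes the proof.

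The main obstacle I anticipate is getting the interpolated flux $j_t$ right: one must produce an $L^2$-in-time, $\sM(\cS^1)$-valued (ideally $L^\infty$-in-space) field that genuinely solves $\partial_t\mu_t + \partial_x j_t = 0$ in distributions with the correct boundary data, and whose pointwise size is bounded by the adjacent discrete fluxes. The affine-interpolation recipe above handles this cleanly on $\cS^1$ because the mesh is one-dimensional and periodic — the spatial primitive of $\partial_t\mu_t$ exists globally precisely because $\sum_k \frac{d}{dt}m_t(k) = 0$, which holds since $m_t\in\cP(\cT)$. A secondary technical point is the measurability/integrability of $t\mapsto j_t$, which follows from local integrability of $J$ (Definition \ref{def:CE}(ii)) together with the a priori density bounds, so that the right-hand side is finite whenever $\cA_\cT(m_t,J_t)$ is integrable (otherwise the inequality is trivial). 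I would also remark that this argument is essentially the one-dimensional specialisation of \cite[Lemma 3.4 / Proposition 4.1]{Gladbach-Kopfer-Maas:2018} and could alternatively be cited, but the direct construction above is short enough to include.
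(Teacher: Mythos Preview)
Your approach is essentially identical to the paper's: define $\mu_t=\iota_\cT m_t$, take $j_t$ to be the piecewise-affine interpolant of the discrete fluxes on each cell, verify the continuity equation, and compare $\bA(\mu_t,j_t)$ with $\cA_\cT(m_t,J_t)$ using the density bounds, Lemma~\ref{lem:prop_admissible_means}\eqref{item:bounds}, and the $\zeta$-regularity inequality $d_{k,k+1}\geq 2\zeta[\cT]\geq 2\zeta\pi_k$; the paper obtains the explicit constant $C=\tfrac{1}{2\zeta\delta^2}$. One small correction: your displayed formula for $j_t$ has a sign error --- to get $\partial_x j_t=-\partial_t\rho_t$ on $A_k$ one needs $j_t(x)=J_t(k-1,k)-\tfrac{x-r_k}{\pi_k}\tfrac{d}{dt}m_t(k)$, which is exactly the paper's $j_t(x)=\tfrac{r_{k+1}-x}{\pi_k}J_t(k-1,k)+\tfrac{x-r_k}{\pi_k}J_t(k,k+1)$.
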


\begin{proof}
We define
\begin{align*}
		\mu_t = \iota_\cT m_t \ , \qquad
		j_t(x) = \frac{r_{k+1} - x}{\pi_k} J_t(k-1,k)
					 + \frac{x - r_k}{\pi_k} J_t(k,k+1) \ ,
\end{align*}
for $x \in A_k$. It follows that $(\mu_t, j_t)$ solves the continuous continuity equation. Moreover,
\begin{align*}
		\bA(\mu_t, j_t)
		 & = \sum_{k = 0}^{K-1} \frac{\pi_k}{m_t(k)}
				\int_{r_k}^{r_{k+1}}
				 	\bigg(  \frac{r_{k+1} - x}{\pi_k} J_t(k-1,k)
							 + \frac{x - r_k}{\pi_k} J_t(k,k+1)  \bigg)^2
							 \dd x
		 \\& \leq \frac12 \sum_{k = 0}^{K-1} \frac{\pi_k^2}{m_t(k)}
							 		 \bigg( J_t^2(k-1,k)
							 					+ J_t^2(k,k+1)  \bigg)
		 \\& = \frac12 \sum_{k = 0}^{K-1} J_t^2(k,k+1)
							 		 \bigg( \frac{\pi_k^2}{m_t(k)}
							 					+ \frac{\pi_{k+1}^2}{m_t(k+1)}   \bigg) \ .
\end{align*}
Write $\rho_t(k) = \frac{m_t(k)}{\pi_k}$.
In view of the bounds on $\emm_t(k)$, we have
\begin{align*}
	\theta_{k,k+1}(\rho_t(k), \rho_t(k+1))
	  & \leq \max \{\rho_t(k), \rho_t(k+1)\}
	\\& \leq \delta^{-2} \min \{\rho_t(k), \rho_t(k+1)\}
		  \leq 2\delta^{-2} \bigg(
			 	\frac{1}{\rho_t(k)} +  \frac{1}{\rho_t(k+1)} \bigg)^{-1} \ \ .
\end{align*}
Since $2 \zeta [\cT] \leq d_{k,k+1}$, we have
\begin{align*}
	\frac{\pi_k^2}{m_t(k)}
		 + \frac{\pi_{k+1}^2}{m_t(k+1)}
		 \leq [\cT] \bigg(\frac{\pi_k}{m_t(k)}
	 		 + \frac{\pi_{k+1}}{m_t(k+1)} \bigg)
		\leq \frac{1}{\zeta \delta^2} \frac{d_{k,k+1}}{\theta_{k,k+1}(\rho_t(k), \rho_t(k+1))} \ .
\end{align*}
It follows that
\begin{align*}
	\bA(\mu_t, j_t)
	& \leq \frac{1}{2 \zeta \delta^2}
	 			\sum_{k = 0}^{K-1} d_{k,k+1}
								\frac{J_t^2(k,k+1)}
										{\theta_{k,k+1}(\rho_t(k), \rho_t(k+1))}
 =	\frac{1}{2 \zeta \delta^2} \cA_\cT(m_t, J_t) \ ,
\end{align*}
which implies the result with $C = \frac{1}{2 \zeta \delta^2}$.
\end{proof}

\section{A simple proof of the lower bound in the 2-periodic case}
\label{sec:simple}

In this section we focus on the simplest non-trivial periodic setting, which corresponds to taking $K = 2$ in Figure \ref{fig:mesh-1-intro}. 
In this setting we present a short proof of the lower bound in Theorem \ref{thm:main} by connecting the problem to known results from \cite{Gigli-Maas:2013,Gladbach-Kopfer-Maas:2018}. This approach does not appear to generalise to $K \geq 3$. 

We fix a parameter $r \in (0, 1)$, and consider the mesh $\cT$ with $K = 2$, and $r_0 = 0$, $r_1 = r$, and $r_2 = 1$. To be able to apply the simple argument in this section, we define the points $z_0 = \frac{r}{2}$ and $z_1 = \frac{r+1}{2}$ to be the midpoints of the cells, so that $d_{01} = d_{12} = \frac{1}{2}$.

\begin{figure}[ht]
\centering
\begin{tikzpicture}[xscale=0.8]
\draw [thick] (0,0)  -- (12,0);

\draw (0,-.2)  -- (0, .2) ; \node [below] at (0, -.4) {$0$};
\draw[thick] (1,.1)  -- (1,-.1) ; \node [below] at (1,-.25) {$\displaystyle \tfrac{r}{N}$};
\draw[red, fill] (0.5,0)  circle [radius=0.100] ;
\draw (3,-.2) -- (3, .2); \node [below] at (3, -.25) {$\displaystyle \tfrac{1}{N}$};
\draw[red, fill] (2,0) circle [radius=0.100];

\draw[thick] (4,.1)  -- (4,-.1) ; \node [below] at (4,-.25) {$\displaystyle \tfrac{1+r}{N}$};
\draw[red, fill] (3.5,0)  circle [radius=0.100] ;
\draw (6,-.2) -- (6, .2); \node [below] at (6, -.25) {$\displaystyle \tfrac{2}{N}$};
\draw[red, fill] (5,0) circle [radius=0.100];

\draw[thick, loosely dotted] (8.5, -0.7) -- (9.5, -0.7) ; 

\draw (12, -.1) -- (12, .1); \node [below] at (12, -.4) {$1$};

\end{tikzpicture}
 \caption{A $2$-periodic mesh $\cT_N$ on $\cS^1$.}
\label{fig:mesh-2periodic}
\end{figure}

Throughout this section we make the standing assumption that $\theta_{01} = \theta_{21}$, and we simply write $\theta := \theta_{01}$. 
This implies that the constant $c^\star(\theta, r) := c^\star(\theta, \cT)$ is given by 
\begin{equation} \label{eq: hom-constant for 2-periodic case}
	c^\star(\theta, r) 
		= \inf_{\alpha \in [0,1]} 
			\frac{1}{\theta\big(\frac{\alpha}{r}, 
			\frac{1-\alpha}{1-r}\big)}\ .
\end{equation}
The notation $m(k) = m(A_k)$ allows us to canonically identify measures on $\cT_N$ with measures on the equidistant mesh corresponding to $r = \frac12$. 
We write $\cT_{r,N}$ to emphasise the dependence of $\cT_N$ on $r$, and write $\cA_{r, N}^\theta$ and $\cW_{r, N}^\theta$ to denote the corresponding energy and metric. The cells in $\cT_{r,N}$ will be labeled $0, \ldots, 2N - 1$.

 \subsection{Lower bound}

The following lemma compares the discrete transport metric on the mesh $\cT_{r,N}$ with the corresponding quantity on the equidistant mesh $\cT_{\frac{1}{2},N}$.

\begin{lemma} \label{lemma: lower bound}
Let $r \in (0,1)$ and $N \geq 1$. For any $m_0, m_1 \in \cP(\cT_{r,N})$ we have
\begin{equation} \label{eq: lower bound claim}
	\cW^\theta_{r,N}(m_0, m_1)
	\geq \sqrt{c^\star(\theta,r)}\; \cW^{\theta_a}_{\frac12, N}(m_0, m_1)
\end{equation}
where $\theta_a$ denotes the arithmetic mean.
\end{lemma}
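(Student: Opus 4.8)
The plan is to reduce the inequality to a pointwise (in $t$, and cell-by-cell) comparison of energies, after which the result follows by taking infima over the continuity equation. First I would fix a competitor $(m_t, J_t)_t \in \CE_{\cT_{r,N}}(m_0, m_1)$. Since $\cT_{r,N}$ and $\cT_{\frac12,N}$ have the same combinatorial structure (both are $2$-periodic meshes with $2N$ cells, and the distances are $d_{01}=d_{12}=\frac12$ in \emph{both} cases by our choice $z_k = $ midpoint), the very same pair $(m_t,J_t)_t$ is an admissible competitor in $\CE_{\cT_{\frac12,N}}(m_0,m_1)$ — here I use the canonical identification of $\cP(\cT_{r,N})$ with $\cP(\cT_{\frac12,N})$ via $m(k)=m(A_k)$. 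So it suffices to show the energy inequality
\[
	\cA^\theta_{r,N}(m,J) \;\geq\; c^\star(\theta,r)\, \cA^{\theta_a}_{\frac12,N}(m,J)
\]
for every $m\in\cP(\cT_{r,N})$ and $J\in\cV$, and then integrate in $t$ and take the infimum over all competitors.

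Next I would unpack both sides. Writing the cells of the $2$-periodic mesh as pairs $(n;0),(n;1)$ with $\pi_0 = r$, $\pi_1 = 1-r$ on the left and $\pi_0=\pi_1=\frac12$ on the right, the left-hand energy is
\[
	\cA^\theta_{r,N}(m,J) = \tfrac1N\sum_{n}\sum_{k=0,1} \tfrac12\,
		\frac{J^2(n;k,k+1)}{\theta\big(\tfrac{N m(n;k)}{\pi_k},\tfrac{N m(n;k+1)}{\pi_{k+1}}\big)},
\]
and the right-hand energy replaces each $\theta(\cdot,\cdot)$ by $\theta_a\big(2Nm(n;k),2Nm(n;k+1)\big) = N\big(m(n;k)+m(n;k+1)\big)$. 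Hence the desired inequality amounts, term by term, to
\[
	\frac{1}{\theta\big(\tfrac{N m(n;k)}{\pi_k},\tfrac{N m(n;k+1)}{\pi_{k+1}}\big)}
	\;\geq\;
	\frac{c^\star(\theta,r)}{N\big(m(n;k)+m(n;k+1)\big)}\,,
\]
which after clearing denominators and using $1$-homogeneity of $\theta$ reduces to
\[
	\theta\Big(\tfrac{m(n;k)}{\pi_k},\tfrac{m(n;k+1)}{\pi_{k+1}}\Big)
	\;\leq\; \frac{1}{c^\star(\theta,r)}\,\big(m(n;k)+m(n;k+1)\big).
\]
Setting $s := m(n;k)+m(n;k+1)$ and $\alpha := m(n;k)/s \in[0,1]$ (when $s=0$ both sides vanish and there is nothing to prove; one should note $\pi_0 = r$, $\pi_1 = 1-r$ so the two pairs $(k,k+1)\in\{(0,1),(1,2)\}$ give the arguments $(\alpha/r,(1-\alpha)/(1-r))$ and $(\alpha/(1-r),(1-\alpha)/r)$ — but by the standing symmetry $\theta_{01}=\theta_{21}$ and the definition \eqref{eq: hom-constant for 2-periodic case}, both are controlled by the same infimum), $1$-homogeneity gives $\theta\big(\tfrac{m(n;k)}{\pi_k},\tfrac{m(n;k+1)}{\pi_{k+1}}\big) = s\,\theta\big(\tfrac{\alpha}{r},\tfrac{1-\alpha}{1-r}\big)$, so the inequality becomes exactly $\theta\big(\tfrac{\alpha}{r},\tfrac{1-\alpha}{1-r}\big) \leq \frac{1}{c^\star(\theta,r)}$, which holds by the very definition of $c^\star(\theta,r)$ in \eqref{eq: hom-constant for 2-periodic case} as $\inf_{\alpha\in[0,1]} 1/\theta\big(\tfrac\alpha r,\tfrac{1-\alpha}{1-r}\big)$.

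The only genuinely delicate points — which are really bookkeeping rather than obstacles — are (i) making sure the boundary/periodicity conventions $m(n;K)=m(n+1;0)$ match up under the identification of the two meshes, so that the same $J$ is admissible on both sides; (ii) handling the degenerate cases where $\theta(\cdot,\cdot)=0$ or $m(n;k)+m(n;k+1)=0$ (use the convention built into $F$ in Definition \ref{def:disc-trans}, so that both sides are $+\infty$ or both $0$), and (iii) checking that $c^\star(\theta,r)>0$ so that dividing is legitimate — this follows from Lemma \ref{lem:prop_admissible_means}(i), which gives $\theta(\tfrac\alpha r,\tfrac{1-\alpha}{1-r}) \leq \max\{\tfrac\alpha r,\tfrac{1-\alpha}{1-r}\} \leq \tfrac1r + \tfrac1{1-r} < \infty$, hence $c^\star(\theta,r) \geq (\tfrac1r+\tfrac1{1-r})^{-1} > 0$. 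Once the term-by-term inequality is in hand, integrating over $t\in[0,1]$, taking the square root, and passing to the infimum over $(m_t,J_t)_t\in\CE(m_0,m_1)$ yields \eqref{eq: lower bound claim}.
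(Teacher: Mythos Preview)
Your proof is correct and follows essentially the same approach as the paper's own argument: both reduce the metric inequality to the pointwise energy comparison $\cA^\theta_{r,N}(m,J) \geq c^\star(\theta,r)\,\cA^{\theta_a}_{\frac12,N}(m,J)$, use the $1$-homogeneity of $\theta$ to factor out $s = m(n;k)+m(n;k+1)$, and then invoke the definition of $c^\star(\theta,r)$ as an infimum. The paper organises the edges as $(2n,2n\pm1)$ so that the small cell always appears in the first argument of $\theta$, while you treat the two edge types separately and appeal to the standing symmetry $\theta_{01}=\theta_{21}$; this is only a cosmetic difference.
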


\begin{proof}
Note that
\begin{align*}
	\cA_{r,N}^\theta(m,J)
	= \frac{1}{2N^2} \sum_{n=0}^{N-1} \bigg\{
		\frac{(J(2n, 2n+1))^2}
			 {\theta\big(\frac{m(2n)}{r}, \frac{m(2n+1)}{1-r}\big)}
			 +
		\frac{(J(2n-1, 2n))^2}
			 {\theta\big(\frac{m(2n)}{r},\frac{m(2n-1)}{1-r}\big)}
			 \bigg\} \ .
\end{align*}
The key observation is that the mean $\theta$ of the densities $\frac{m(2n)}{r}$ and $\frac{m(2n\pm1)}{1-r}$ on the mesh $\cT_{r,N}$ can be estimated in terms of the \emph{arithmetic mean} $\theta^a$ of the corresponding densities $\frac{m(2n)}{1/2}$ and $\frac{m(2n\pm1)}{1/2}$ on the symmetric mesh $\cT_{\frac12, N}$. Indeed, as we can write
\begin{align*}
	m(2n) = \alpha_n^\pm\big( m(2n) + m(2n \pm 1) \big) \tand
	m(2n \pm 1) = (1-\alpha_n^\pm) \big(m(2n) + m(2n \pm 1) \big)
\end{align*}
for some $\alpha_n^{\pm} \in [0,1]$, the $1$-homogeneity of $\theta$ yields
\begin{align*}
	\theta\bigg(\frac{m(2n)}{r}, \frac{m(2n \pm 1)}{1-r}\bigg)
		& =  \big( m(2n) + m(2n \pm 1) \big)
				\theta\bigg(\frac{\alpha_n^\pm}{r},
					     \frac{1-\alpha_n^\pm}{1-r}\bigg)
	\\& \leq \theta^a\bigg(\frac{m(2n)}{1/2},
						   \frac{m(2n\pm1)}{1/2}\bigg)\frac{1}{c^\star(\theta, r)} \ .
\end{align*}
Consequently,
\begin{align*}
		\cA_{r,N}^\theta(m,J)
		\geq c^\star(\theta, r) \cA_{\frac12,N}^{\theta^a}(m,J) \ .
\end{align*}
As the continuity equation does not depend on $r$, this implies the result.
\end{proof}

The sought lower bound for $\cW_{r,N}^\theta$ can now be easily obtained.

\begin{corollary} \label{corollary: final lower bound}
Fix $r \in (0,1)$. 
For any $\mu_0, \mu_1 \in \cP(\cS^1)$, we have
\begin{align*}
\liminf_{N \to \infty} \cW_{r,N}(P_N \mu_0, P_N \mu_1)
	\geq \sqrt{c^\star(\theta,r)} \bW(\mu_0,\mu_1) \ .
\end{align*}
\end{corollary}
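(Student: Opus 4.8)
The plan is to combine Lemma \ref{lemma: lower bound}, which reduces the anisotropic problem on $\cT_{r,N}$ to the symmetric-mesh problem with the arithmetic mean, with the known convergence result for discrete transport on the equidistant mesh. First I would recall that $\cT_{\frac12,N}$ is precisely the equidistant mesh on $\cS^1$ with $2N$ cells and midpoints, for which the means $\theta^a$ trivially satisfy the compatibility conditions \eqref{eq:compatibility} (with $\lambda_{k,k+1} = \frac12$ and the arithmetic mean meeting the required inequality as an equality). Hence, by the results of \cite{Gigli-Maas:2013,Gladbach-Kopfer-Maas:2018} — or equivalently by the ``$c^\star = 1$'' case noted in Remark \ref{rem:isotropic} — we have
\begin{equation*}
	\lim_{N \to \infty} \cW^{\theta^a}_{\frac12,N}(P_N \mu_0, P_N \mu_1) = \bW(\mu_0, \mu_1) \ .
\end{equation*}
Here one must be slightly careful about the identification of measures: a measure $m \in \cP(\cT_{r,N})$ is identified with a measure in $\cP(\cT_{\frac12,N})$ by matching cell masses, so $P_N \mu_0$ and $P_N \mu_1$ make sense on both meshes, but the \emph{embedded} measures $\iota_{\cT_{r,N}} m$ and $\iota_{\cT_{\frac12,N}} m$ differ. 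This is harmless here because the convergence statement for the equidistant mesh is about $\cW^{\theta^a}_{\frac12,N}(P_N\mu_0,P_N\mu_1)$ converging to $\bW(\mu_0,\mu_1)$, which only involves the data $\mu_0,\mu_1$ and the symmetric mesh; the value of $\cW^{\theta^a}_{\frac12,N}$ depends on $m_0,m_1$ only through their cell masses, which are exactly $P_N\mu_0, P_N\mu_1$.

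Then I would take $\liminf_{N\to\infty}$ in the inequality \eqref{eq: lower bound claim} applied to $m_0 = P_N\mu_0$, $m_1 = P_N\mu_1$, giving
\begin{equation*}
	\liminf_{N\to\infty} \cW_{r,N}^\theta(P_N\mu_0, P_N\mu_1)
	\geq \sqrt{c^\star(\theta,r)} \; \liminf_{N\to\infty} \cW^{\theta^a}_{\frac12,N}(P_N\mu_0,P_N\mu_1)
	= \sqrt{c^\star(\theta,r)} \; \bW(\mu_0,\mu_1) \ ,
\end{equation*}
where the factor $\sqrt{c^\star(\theta,r)}$ is a fixed constant (independent of $N$) so it pulls out of the $\liminf$. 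Since $\cW_{r,N} = \cW_{r,N}^\theta$ under the standing assumption $\theta_{01} = \theta_{21} = \theta$, this is exactly the claim.

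The only genuine content beyond Lemma \ref{lemma: lower bound} is the input convergence result on the equidistant mesh, so the main ``obstacle'' is really just citing it correctly: one should invoke the one-dimensional periodic specialisation of \cite{Gladbach-Kopfer-Maas:2018} (equivalently the statement in Remark \ref{rem:isotropic} that $c^\star = 1$ when \eqref{eq:compatibility} holds, applied to the symmetric mesh with arithmetic means), rather than reproving it. If one prefers a self-contained route, the lower bound for the equidistant-mesh metric can alternatively be obtained from Proposition \ref{prop:rough-lower-bound} combined with a density-truncation argument (restricting to $m$ with $\delta \le m_t(k)/\pi_k \le \delta^{-1}$ and letting $\delta \to 0$), but invoking the established convergence is cleaner and suffices here.
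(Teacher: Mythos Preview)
Your proposal is correct and follows essentially the same approach as the paper: apply Lemma~\ref{lemma: lower bound} with $m_i = P_N\mu_i$ and then invoke the known convergence $\cW^{\theta_a}_{\frac12,N}(P_N\mu_0,P_N\mu_1) \to \bW(\mu_0,\mu_1)$ for the equidistant mesh from \cite{Gigli-Maas:2013,Gladbach-Kopfer-Maas:2018}. The paper's proof is just the two-line version of what you wrote; your additional remarks on the identification of measures and the alternative self-contained route are fine but not needed.
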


\begin{proof}
This follows by applying Lemma \ref{lemma: lower bound} to the measures $m_i := P_N \mu_i$ and using the known convergence result for symmetric meshes \cite{Gigli-Maas:2013,Gladbach-Kopfer-Maas:2018}, which asserts that
\begin{align*}
\lim_{N \to \infty}
	\cW^{\theta_a}_{\frac12,N}
		(P_N \mu_0, P_N \mu_1) = \bW(\mu_0,\mu_1) \ .
\end{align*}
\end{proof}

For proving the corresponding upper bound
\begin{align*}
	\limsup_{N \to \infty} \cW_{r,N}(P_N \mu_0, P_N \mu_1) \leq \sqrt{c^\star(\theta,r)} \bW(\mu_0,\mu_1) \ ,
\end{align*} 
the $2$-periodic setting does not offer conceptual simplifications compared to the general $K$-periodic setting. Therefore, we will directly treat the $K$-periodic setting in Section \ref{sec:upper}.

\subsection{Examples}

We finish this section by explicitly computing the value of $c^\star(\theta,r)$ in a number of cases.
We write 
\begin{align*}
	g_{\theta,r}(\alpha) := \theta\bigg( \frac{\alpha}{r}, \frac{1 - \alpha}{1-r}\bigg) 
	\ , \quad \text{ so that } \quad
	c^\star(\theta,r) = \inf_{\alpha \in (0,1)} \frac{1}{g_{\theta,r}(\alpha)} 
	 \ .
\end{align*}

\begin{example}[$\theta$ is $r$-balanced]\label{ex:}
Suppose that $\theta(a,b) \leq r a + (1-r) b$ for any $a, b \geq 0$ (i.e., $\theta$ is $r$-balanced in the sense of Definition \ref{def:balance} below). 
Applying this inequality to $a = \frac{\alpha}{r}$ and $b = \frac{1 - \alpha}{1-r}$ we immediately obtain $g_{\theta, r}(\alpha) \leq 1$ for all $\alpha \in [0,1]$. Since $g_{\theta, r}(r) = \theta(1,1) = 1$ by assumption, it follows that
\begin{align*}
	c^\star(\theta,r) = \frac{1}{g_{\theta, r}(r)} = 1 \ .
\end{align*}
\end{example}

\begin{example}[Geometric mean]\label{ex:geometric}
Let  $\theta(a,b) = \sqrt{ab}$. Then $g_{\theta,r}(\alpha) =
\sqrt{\frac{\alpha(1 - \alpha)}{r(1-r)}}$ is uniquely maximised at $\alpha = \frac12$, and we obtain
\begin{align*}
	c^\star(\theta,r) = 2\sqrt{r(1-r)} \ .
\end{align*}
Note that the fact that $\alpha^\star = \frac12$ means that the mass is equally distributed among large and small cells, irrespectively of the value of $r$. Thus, there will be no oscillations for the optimal discrete measures; however, this means that oscillations at the level of the density do occur.
\end{example}

\begin{example}[Harmonic mean]\label{ex:harmonic}
Let $\theta(a,b) = \frac{2ab}{a + b}$. In this case we have
$
	g(\alpha) :=  \frac{1}{g_{\theta,r}(\alpha)}
		=  \frac12 \big(\frac{1-r}{1-\alpha} + \frac{r}{\alpha} \big) \ ,
$
and $g'(\alpha) = \frac{1-r}{2(1-\alpha)^2} - \frac{r}{2\alpha^2}$. It follows that $g'$ vanishes at $\alpha^\star = \frac{\sqrt{r}}{\sqrt{r} + \sqrt{1-r}}$, which is indeed the unique minimiser of $g$. Consequently, 
\begin{align*}
	c^\star(\theta,r) = g(\alpha^\star) =  \frac12\big(\sqrt{r} + \sqrt{1-r}\big)^2 \ .
\end{align*}
\end{example}

\begin{example}[Arithmetic mean]\label{ex:arithmetic}
Let $\theta(a,b) = \frac{a+b}{2}$. Then $g_{\theta,r}(\alpha) = \frac12\big( \frac{1 - \alpha}{1-r} +  \frac{\alpha}{r}\big)$ is affine in $\alpha$. 
If $r < \frac12$ (resp. $r > \frac12$), the maximum is attained at $\alpha^\star = 1$ (resp. $\alpha^\star = 0$). In both cases, this means that all the mass will be assigned to the small cells.
It follows that
\begin{align*}
	c^\star(\theta,r) = 2 \min\{ r,  1-r \} \ .
\end{align*}
\end{example}

\begin{example}[Minimum]\label{ex:minimum}
Let $\theta(a,b) = \min \{ a, b \}$.
In this case, $g_{\theta,r}(\alpha) = \min\{ \frac{1 - \alpha}{1-r}, \frac{\alpha}{r} \}$ is uniquely maximised at $\alpha^\star = r$. This means that the assigned mass is proportional to the size of the cells, hence there are no oscillations at the level at the density. We find
\begin{align*}
	c^\star(\theta,r) = \frac{1}{g_{\theta,r}(\alpha^\star)} = 1 \ .
\end{align*}
\end{example}

\section{Analysis of the effective mobility}
\label{sec:c-star}
In this section we investigate some basic properties of the effective mobility $c^\star(\theta, \cT)$ defined in \eqref{eq: hom-constant for K-periodic case}, and relate its value to certain geometric properties of the mesh $\cT$ that have been considered in \cite{Gladbach-Kopfer-Maas:2018}. Recall:
\begin{equation} \label{eq:c-again}
c^\star(\theta, \cT) := \inf \left \{  \sum_{k=0}^{K-1} \frac{d_{k,k+1}}{\theta_{k,k+1} \Big( \frac{m_k}{\pi_k} , \frac{m_{k+1}}{\pi_{k+1}} \Big)} \suchthat 
m \in \cP(\cT)
 \right \} \ .
\end{equation}

We start with a simple observation.

\begin{proposition}\label{prop:c-less-than-one}
For any mesh $\cT$ on $\cS^1$ and any family of means $\theta = \{\theta_{k,k+1}\}_{k=0}^{K-1}$ we have $c^\star(\theta, \cT) \leq 1$.
\end{proposition}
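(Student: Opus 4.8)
The plan is to exhibit a specific competitor $m \in \cP(\cT)$ in the infimum \eqref{eq:c-again} for which the sum equals exactly $1$. The natural choice is the measure that makes every discrete density $\frac{m_k}{\pi_k}$ equal to the constant $1$, i.e. $m_k := \pi_k$ for all $k$. Since $\sum_k \pi_k = 1$ (the $A_k$ partition $\cS^1 = \R/\Z$), this $m$ lies in $\cP(\cT)$, so it is admissible.

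For this choice, using $\theta_{k,k+1}(1,1) = 1$ (part of the definition of an admissible mean, Definition \ref{def:mean}), each summand becomes $\frac{d_{k,k+1}}{\theta_{k,k+1}(1,1)} = d_{k,k+1}$. Hence the value of the functional at this $m$ is $\sum_{k=0}^{K-1} d_{k,k+1}$. The distances $d_{k,k+1}$ are the gaps between consecutive points $z_k$ on $\cS^1$, and since the $z_k$ are ordered around the circle with $z_k \in \mathrm{int}(A_k)$, these gaps sum to one full turn, i.e. $\sum_{k=0}^{K-1} d_{k,k+1} = 1$. Therefore the infimum defining $c^\star(\theta,\cT)$ is at most $1$.

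I would write this out as follows:
\begin{proof}
Consider the measure $m \in \cP(\cT)$ defined by $m_k := \pi_k$ for $k = 0, \ldots, K-1$; this is a probability measure on $\cT$ since $\sum_{k=0}^{K-1} \pi_k = 1$. For this choice we have $\frac{m_k}{\pi_k} = 1$ for every $k$, so that, using $\theta_{k,k+1}(1,1) = 1$,
\[
	\sum_{k=0}^{K-1} \frac{d_{k,k+1}}{\theta_{k,k+1}\big( \tfrac{m_k}{\pi_k}, \tfrac{m_{k+1}}{\pi_{k+1}} \big)}
	= \sum_{k=0}^{K-1} d_{k,k+1} = 1 \ ,
\]
where the last equality holds because the points $z_0 < z_1 < \cdots < z_{K-1}$ are arranged cyclically around $\cS^1$ with $z_k \in \mathrm{int}(A_k)$, so the consecutive distances $d_{k,k+1}$ sum to the length of $\cS^1$, namely $1$. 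Taking the infimum over all admissible $m$ yields $c^\star(\theta, \cT) \leq 1$.
\end{proof}

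There is essentially no obstacle here: the only point requiring a moment's care is the identity $\sum_k d_{k,k+1} = 1$, which follows from the cyclic ordering of the $z_k$ on the torus (each $d_{k,k+1}$ is the length of the arc from $z_k$ to $z_{k+1}$ not containing the other $z_j$'s, and these arcs tile $\cS^1$). One should make sure the convention "modulo $K$" in the index $k+1$ is consistent with "modulo $1$" on $\cS^1$, but this is exactly how the $d_{k,k+1}$ were set up in Section \ref{sec:intro}.
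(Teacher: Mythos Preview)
Your proof is correct and follows exactly the approach of the paper: choose the competitor $m_k = \pi_k$, use $\theta_{k,k+1}(1,1)=1$, and note that $\sum_{k} d_{k,k+1} = 1$. The paper's proof is the one-line version of what you wrote.
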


\begin{proof}
This follows by using the competitor $m_k = \pi_k$ in \eqref{eq:c-again}.
\end{proof}

In view of this result, Theorem \ref{thm:main} implies the upper bound
\begin{align*}
	\limsup_{N \to \infty} 
		\cW_N(P_N \mu_0, P_N \mu_1) \leq 
		\bW(\mu_0,\mu_1) \ ,
\end{align*}
which had already been proved in \cite{Gladbach-Kopfer-Maas:2018}.

\begin{proposition}\label{prop:existence-of-min}
The infimum in \eqref{eq:c-again} is attained.
\end{proposition}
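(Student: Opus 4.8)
The statement asserts that the infimum defining $c^\star(\theta,\cT)$ over $m\in\cP(\cT)$ is attained. The natural approach is the direct method: the domain $\cP(\cT)$ is compact (it is the standard simplex in $\R^K$), so it suffices to show that the objective functional
\[
	m \mapsto G(m) := \sum_{k=0}^{K-1} \frac{d_{k,k+1}}{\theta_{k,k+1}\big( \tfrac{m_k}{\pi_k}, \tfrac{m_{k+1}}{\pi_{k+1}} \big)}
\]
is lower semicontinuous on $\cP(\cT)$, with the convention that $\tfrac{1}{\theta(a,b)} = +\infty$ when $\theta(a,b) = 0$ (which by Lemma~\ref{lem:prop_admissible_means}\eqref{item:bounds} happens precisely when $\min\{a,b\} = 0$, i.e. when $\theta(a,b)=0 \iff a=b=0$ only if... — actually $\theta(a,b)\ge\min\{a,b\}$, so $\theta(a,b)=0$ forces $\min\{a,b\}=0$). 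Each summand is, by Lemma~\ref{lem:prop_admissible_means}\eqref{item:jlsc}, the composition of the jointly lower semicontinuous map $(a,b)\mapsto \tfrac{1}{\theta_{k,k+1}(a,b)} \in (0,+\infty]$ with the continuous linear map $m\mapsto\big(\tfrac{m_k}{\pi_k},\tfrac{m_{k+1}}{\pi_{k+1}}\big)$, hence lower semicontinuous; a finite sum of lower semicontinuous $[0,+\infty]$-valued functions is lower semicontinuous.

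**Steps, in order.** First I would record that $\cP(\cT)$, identified with the simplex $\{m\in\R_+^K : \sum_k m_k = 1\}$, is a compact metric space. Second, I would invoke Lemma~\ref{lem:prop_admissible_means}\eqref{item:jlsc} to get joint lower semicontinuity of $(a,b)\mapsto 1/\theta_{k,k+1}(a,b)$ on $\R_+\times\R_+$, and note that precomposition with the continuous map $m\mapsto(m_k/\pi_k, m_{k+1}/\pi_{k+1})$ preserves lower semicontinuity, so $G$ is lower semicontinuous. Third, I would take a minimising sequence $(m^{(n)})_n \subset \cP(\cT)$ with $G(m^{(n)}) \to c^\star(\theta,\cT)$; by compactness pass to a subsequence $m^{(n)} \to m^\star \in \cP(\cT)$; then lower semicontinuity gives $G(m^\star) \le \liminf_n G(m^{(n)}) = c^\star(\theta,\cT)$, while $m^\star\in\cP(\cT)$ forces $G(m^\star)\ge c^\star(\theta,\cT)$, so $G(m^\star)=c^\star(\theta,\cT)$ and the infimum is attained at $m^\star$. (One should also note $c^\star(\theta,\cT)<\infty$ — e.g. by Proposition~\ref{prop:c-less-than-one}, $c^\star(\theta,\cT)\le 1$ — so the minimising sequence and the limit are not trivially infinite.)

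**Main obstacle.** There is essentially no serious obstacle here; the only point requiring a moment's care is the behaviour at the boundary of the simplex, where some $m_k$ vanish and a denominator $\theta_{k,k+1}(\cdot,\cdot)$ may be $0$, making a summand $+\infty$. This is exactly why plain continuity of $G$ fails and why one must work with lower semicontinuity and the extended-real-valued convention; Lemma~\ref{lem:prop_admissible_means}\eqref{item:jlsc} is tailored precisely to handle this, so the argument goes through cleanly. One should simply be explicit that $G$ takes values in $[0,+\infty]$ throughout and that lower semicontinuity is meant in that extended sense.
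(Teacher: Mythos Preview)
Your proposal is correct and is precisely the approach the paper takes: the paper's own proof is the single sentence ``This readily follows using the lower-semicontinuity result from Lemma~\ref{lem:prop_admissible_means},'' and you have simply spelled out the direct-method argument (compactness of the simplex plus lower semicontinuity of $G$ via Lemma~\ref{lem:prop_admissible_means}\eqref{item:jlsc}) that this sentence encodes.
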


\begin{proof}
This readily follows using the lower-semicontinuity result from Lemma \ref{lem:prop_admissible_means}.
\end{proof}

In the remainder of this section we shall investigate under which conditions on $\theta$ and $\cT$ we have $c^\star(\theta, \cT) = 1$.
For this purpose, we consider two geometric conditions:

\begin{definition}[Geometric conditions on the mesh]\label{def:COM-and-ISO}
Fix $\{\lambda_{k,k+1}\}_{k=0}^{K-1} \in [0,1]^K$, and set $\lambda_{k+1,k} = 1 - \lambda_{k,k+1}$.
We say that a mesh $\cT = \cT_{\pi,z}$ on $\cS^1$ satisfies
\begin{enumerate}
\item the \emph{center-of-mass condition} with parameters $\{\lambda_{k,k+1}\}_{k=0}^{K-1}$ if, for all $k$,
\begin{align}
	\label{eq:com}
	r_{k+1} = \lambda_{k+1,k} z_k + \lambda_{k,k+1} z_{k+1}  \ ;
\end{align}
\item the \emph{isotropy condition} with parameters $\{\lambda_{k,k+1}\}_{k=0}^{K-1}$ if, for all $k$,
\begin{align}
	\label{eq:iso}
 \pi_k = \lambda_{k,k-1} d_{k-1, k}
		 + \lambda_{k,k+1} d_{k, k+1}   \ .
\end{align}
\end{enumerate}
\end{definition}

Both of these conditions have been studied for meshes on bounded convex domains in $\R^d$ in \cite{Gladbach-Kopfer-Maas:2018}. The center-of-mass condition asserts that the center of mass of the cell interfaces lie on the line segment connecting the support points of the respective cells. In dimensions $d \geq 2$, this condition poses a strong geometric condition on the mesh. However, in our one-dimensional context, the condition is always satisfied, for a unique choice of the parameters $\{\lambda_{k,k+1}\}_k$.
The isotropy condition is weaker than the center-of-mass condition: it holds with the same parameters, but there is an additional degree of freedom, as the following result shows.

\begin{proposition}\label{prop:com-iso-char}
Let $\cT = \cT_{\pi,z}$ be a mesh on $\cS^1$ and set $\overline\lambda_{k,k+1} = \frac{r_{k+1} - z_k}{z_{k+1} - z_k}$ for $k = 0, \ldots, K-1$. 
For $\{ \lambda_{k,k+1} \} \subseteq [0,1]$ the following assertions hold:
\begin{enumerate}
\item 
The center-of-mass condition holds 
if and only if for any $k = 0 , \ldots , K-1$,
\begin{align*}
	 \lambda_{k,k+1} = \overline\lambda_{k,k+1} \ .
\end{align*}
\item
The isotropy condition holds 
if and only if there exists 
$s \in [ - \min_k \overline\lambda_{k,k+1} d_{k,k+1}, \\ \min_k  \overline\lambda_{k+1,k} d_{k,k+1}]$ such that for any $k = 0 , \ldots , K-1$, 
\begin{align*}
	\lambda_{k,k+1} = \overline\lambda_{k,k+1} + \frac{s}{d_{k,k+1}} \ .
\end{align*}
\end{enumerate}
\end{proposition}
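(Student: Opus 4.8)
The plan is to verify both equivalences directly from the definitions, treating the two conditions in parallel. For part (1), the center-of-mass condition \eqref{eq:com} reads $r_{k+1} = (1-\lambda_{k,k+1}) z_k + \lambda_{k,k+1} z_{k+1}$, which is an affine equation in the single unknown $\lambda_{k,k+1}$. Solving it gives $\lambda_{k,k+1}(z_{k+1} - z_k) = r_{k+1} - z_k$, hence $\lambda_{k,k+1} = \overline\lambda_{k,k+1}$, provided $z_{k+1} \neq z_k$ (which holds since the $z_k$ are distinct points of $\cS^1$ with $z_k \in A_k$, so $z_k < r_{k+1} < z_{k+1}$ modulo $1$, and in particular $\overline\lambda_{k,k+1} \in (0,1)$). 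Conversely, substituting $\lambda_{k,k+1} = \overline\lambda_{k,k+1}$ into \eqref{eq:com} recovers the identity. This part is essentially immediate.

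For part (2), I would first observe that the isotropy condition \eqref{eq:iso} can be rewritten using the identity $\pi_k = (r_{k+1} - z_k) + (z_k - r_k)$ together with the center-of-mass-type relations $r_{k+1} - z_k = \overline\lambda_{k,k+1} d_{k,k+1}$ and $z_k - r_k = \overline\lambda_{k,k-1} d_{k-1,k}$ (the latter following from applying part (1)'s computation to the interface $r_k$ and noting $\overline\lambda_{k-1,k} + \overline\lambda_{k,k-1} = 1$, so $z_k - r_k = (1 - \overline\lambda_{k-1,k}) d_{k-1,k} = \overline\lambda_{k,k-1} d_{k-1,k}$). Thus $\pi_k = \overline\lambda_{k,k-1} d_{k-1,k} + \overline\lambda_{k,k+1} d_{k,k+1}$, i.e. the isotropy condition holds automatically with parameters $\overline\lambda$. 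Now writing a general candidate as $\lambda_{k,k+1} = \overline\lambda_{k,k+1} + \eta_k$ for real perturbations $\eta_k$ (with the convention $\lambda_{k+1,k} = 1 - \lambda_{k,k+1}$ forcing the perturbation on the reversed edge to be $-\eta_k$ in the appropriate sense), the isotropy condition \eqref{eq:iso} becomes $0 = -\eta_{k}^{-} d_{k-1,k} + \eta_k^{+} d_{k,k+1}$ for a suitable bookkeeping of signs; unraveling the sign conventions (here $\lambda_{k,k-1} = 1 - \lambda_{k-1,k} = \overline\lambda_{k,k-1} - \eta_{k-1}$), one gets the recursion $\eta_k d_{k,k+1} = \eta_{k-1} d_{k-1,k}$ for all $k$. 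Hence the quantity $s := \eta_k d_{k,k+1}$ is independent of $k$, which gives $\lambda_{k,k+1} = \overline\lambda_{k,k+1} + s/d_{k,k+1}$, and conversely any such choice satisfies the recursion and hence \eqref{eq:iso}.

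Finally I would pin down the range of $s$: the constraint is precisely $\lambda_{k,k+1} \in [0,1]$ for every $k$, equivalently $0 \leq \overline\lambda_{k,k+1} + s/d_{k,k+1} \leq 1$, i.e. $-\overline\lambda_{k,k+1} d_{k,k+1} \leq s \leq (1 - \overline\lambda_{k,k+1}) d_{k,k+1} = \overline\lambda_{k+1,k} d_{k,k+1}$. Intersecting over $k$ yields exactly $s \in [-\min_k \overline\lambda_{k,k+1} d_{k,k+1},\ \min_k \overline\lambda_{k+1,k} d_{k,k+1}]$, as claimed.

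The only mildly delicate point is the sign bookkeeping in the recursion $\eta_k d_{k,k+1} = \eta_{k-1} d_{k-1,k}$ — one must be careful that the convention $\lambda_{k+1,k} = 1 - \lambda_{k,k+1}$ correctly propagates the perturbation with the right sign when \eqref{eq:iso} couples the edge $(k-1,k)$ (entering via $\lambda_{k,k-1}$) with the edge $(k,k+1)$ (entering via $\lambda_{k,k+1}$). Everything else is elementary affine algebra.
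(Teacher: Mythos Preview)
Your proposal is correct and follows precisely the approach indicated by the paper, whose proof is a single sentence (``This follows immediately by solving the corresponding linear systems''). You have simply written out that linear-algebra computation in full, including the key step of showing that $\overline\lambda$ itself satisfies the isotropy identity and then reducing the general case to the recursion $\eta_k d_{k,k+1} = \eta_{k-1} d_{k-1,k}$; the sign bookkeeping you flag as delicate is handled correctly.
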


\begin{proof}
This follows immediately by solving the corresponding linear systems.
\end{proof}

\begin{remark}[Relation to the asymptotic isotropy condition]\label{rem:asymp-iso}
Recall from \cite[Definition 1.3]{Gladbach-Kopfer-Maas:2018} that a family of meshes $\{\cT\}$ (in any dimension) is said to satisfy the \textit{isotropy condition} with parameters $\{\lambda_{KL}\}$ if, for any $K \in \cT$,
\begin{equation} \label{eq: general isotropy condition K}
\sum_{L \in \cT} \lambda_{KL} \frac{\Haus^{d-1}(\partial K \cap \partial L)}{| z_K - z_L |}
	( z_K - z_L ) \otimes ( z_K - z_L ) 
	\leq |K| \big( I_d + \eta_{\cT}(K) \big)
\end{equation}
where $\displaystyle \sup_{K \in \cT} | \eta_{\cT}(K)| \to 0$ as $\max \{\diam(A) \, : \, A \in \cT \}  \to 0$.

Applying this condition to the family of one-dimensional periodic meshes $\cT^N$ constructed from $\cT$, it reduces to
\begin{equation*}
	\lambda_{k,k-1} d_{k-1,k} + \lambda_{k,k+1} d_{k,k+1}
		\leq \pi_k ( 1 + \eta_N(k) )
\end{equation*}
for all $N\geq 1$ and $k = 0, \ldots, K-1$, where $\eta_N(k) \to 0 $ as $N \to \infty$.
As the left-hand side does not depend on $N$, this condition in turn simplifies to
\begin{equation} \label{eq: isotropy dim 1 ineq}
\lambda_{k,k-1} d_{k-1,k} + \lambda_{k,k+1} d_{k,k+1} \leq \pi_k
\end{equation}
for all $k=0, \ldots, K-1$.

Clearly, \eqref{eq:iso} implies \eqref{eq: isotropy dim 1 ineq}.
To see that both assertions are equivalent, we note that \eqref{eq: isotropy dim 1 ineq} can be written as
\begin{equation} \label{eq: isotropy dim 1 ineq2}
\lambda_{k,k+1} d_{k,k+1} - \lambda_{k-1,k} d_{k-1,k} \leq \pi_k - d_{k-1,k} \ .
\end{equation}
To obtain a contradition, suppose that we have strict inequality in \eqref{eq: isotropy dim 1 ineq2} for some $k = \bar{k}$. Summation over $k=0, \ldots, K-1$ yields
\begin{align*}
	0 = 
		\sum_{k=0}^{K-1} \big( \lambda_{k,k+1} d_{k,k+1}
							 - \lambda_{k-1,k} 	d_{k-1,k} \big)
	  < \sum_{k=0}^{K-1} \big( \pi_k - d_{k-1,k} \big)
	  = 0 \ ,
\end{align*}
which is absurd.

In summary, we conclude that the isotropy condition \eqref{eq:iso} is equivalent to the asymptotic isotropy condition \eqref{eq: general isotropy condition K} for the family of meshes $\{\cT^N\}$.
\end{remark}

The next definition will be used to connect geometric properties of the mesh to properties of the means in the definition of the transport distance.

\begin{definition}[Adaptedness]\label{def:balance}
Let $\lambda, \lambda_{k,k+1} \in [0,1]$ for $k = 0, \ldots, K-1$.
\begin{enumerate}
\item  A mean $\theta$ is said to be \emph{$\lambda$-balanced} if $\theta(a,b) \leq \lambda a + ( 1-\lambda) b$ for any $a, b \geq 0$.
\item A family of means $\{\theta_{k,k+1}\}$ is said to be \emph{adapted} to the parameters $\{\lambda_{k,k+1}\}$ if $\theta_{k,k+1}$ is $\lambda_{k,k+1}$-balanced for each $k$.
\end{enumerate}
\end{definition}

\begin{remark}\label{rem:balance}
Each continuously differentiable mean $\theta$ is $\lambda$-balanced for exactly one value of $\lambda \in [0,1]$, namely
\begin{equation} \label{eq: smooth balancing}
\lambda  = \partial_1 \theta(1,1) \ .
\end{equation}
A non-smooth mean $\theta$ can be $\lambda$-balanced for multiple values of $\lambda$, e.g., the mean $(a,b) \mapsto \min\{a,b\}$ is  $\lambda$-balanced for any $\lambda \in [0,1]$.
\end{remark}

Now we are ready to state the main result of this section. The result is consistent with the main result in \cite{Gladbach-Kopfer-Maas:2018}, which asserts that the asymptotic isotropy condition is necessary (and essentially sufficient) for Gromov--Hausdorff convergence of the discrete transport distance to $\bW$.

\begin{theorem}[Isotropy is equivalent to $c^\star(\theta, \cT)=1$] \label{thm: equivalence isotropy condition}
Let $\{\theta_{k,k+1}\}$ be a family of means that are adapted to $\{\lambda_{k,k+1}\}$.
\begin{enumerate}
\item \label{it:iso-implies-c}
If $\cT$ satisfies the isotropy condition with parameters $\{\lambda_{k,k+1}\}$,
then $c^\star(\theta, \cT) = 1$.
\item \label{it:c-implies-iso}
Assume that each mean $\theta_{k,k+1}$ is continuously differentiable.
If $c^\star(\theta, \cT) = 1$, then $\cT$ satisfies the isotropy condition with parameters $\{\lambda_{k,k+1}\}$.
\end{enumerate}
\end{theorem}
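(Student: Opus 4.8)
The plan is to prove the two implications separately, in both cases working directly with the variational formula \eqref{eq:c-again} for $c^\star(\theta,\cT)$ together with the characterisation of the isotropy condition from Proposition \ref{prop:com-iso-char}.

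\textbf{Proof of \eqref{it:iso-implies-c}.} First I would show $c^\star(\theta,\cT)\geq 1$ under the isotropy hypothesis; combined with Proposition \ref{prop:c-less-than-one} this gives equality. The idea is a convexity/Jensen argument. Fix $m\in\cP(\cT)$ and write $\rho_k=\frac{m_k}{\pi_k}$. By adaptedness, $\theta_{k,k+1}(\rho_k,\rho_{k+1})\leq \lambda_{k,k+1}\rho_k+\lambda_{k+1,k}\rho_{k+1}$, so
\[
\sum_{k=0}^{K-1}\frac{d_{k,k+1}}{\theta_{k,k+1}(\rho_k,\rho_{k+1})}
\geq \sum_{k=0}^{K-1}\frac{d_{k,k+1}}{\lambda_{k,k+1}\rho_k+\lambda_{k+1,k}\rho_{k+1}}.
\]
Now I would view the right-hand side via the convexity of $t\mapsto 1/t$: introduce the probability weights $w_k = d_{k,k+1}\big/\sum_j d_{j,j+1}$; actually the cleaner route is to use the Cauchy--Schwarz (or harmonic-arithmetic) inequality in the form $\sum_k \frac{a_k}{b_k}\geq \frac{(\sum_k a_k)^2}{\sum_k a_k b_k}$ with $a_k=d_{k,k+1}$ and $b_k=\lambda_{k,k+1}\rho_k+\lambda_{k+1,k}\rho_{k+1}$. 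This gives a lower bound of $\big(\sum_k d_{k,k+1}\big)^2\big/\sum_k d_{k,k+1}(\lambda_{k,k+1}\rho_k+\lambda_{k+1,k}\rho_{k+1})$. The denominator, after reindexing the two sums, equals $\sum_k \rho_k(\lambda_{k,k+1}d_{k,k+1}+\lambda_{k,k-1}d_{k-1,k}) = \sum_k \rho_k\pi_k = \sum_k m_k = 1$ by the isotropy condition \eqref{eq:iso}. Since $\sum_k d_{k,k+1}=1$ (the $d_{k,k+1}$ partition $\cS^1$ up to the choice of $z_k$; more precisely $\sum_k d_{k,k+1}$ is the total length of $\cS^1$, which is $1$), we conclude $c^\star(\theta,\cT)\geq 1$. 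The one subtlety to check carefully is that $\sum_k d_{k,k+1}=1$, which holds because $z_0<z_1<\cdots<z_{K-1}<z_0+1$ partitions the circle.

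\textbf{Proof of \eqref{it:c-implies-iso}.} Here I would argue by computing the first-order optimality condition for the infimum defining $c^\star$. Assume each $\theta_{k,k+1}$ is $C^1$; by Remark \ref{rem:balance}, adaptedness forces $\lambda_{k,k+1}=\partial_1\theta_{k,k+1}(1,1)$ (and $\lambda_{k+1,k}=\partial_2\theta_{k,k+1}(1,1)$ by $1$-homogeneity and $\theta(1,1)=1$). Suppose $c^\star(\theta,\cT)=1$. By Proposition \ref{prop:c-less-than-one} the competitor $m_k=\pi_k$, i.e.\ $\rho_k\equiv 1$, achieves the value $1$ (since each $\theta_{k,k+1}(1,1)=1$ and $\sum_k d_{k,k+1}=1$), hence $\rho\equiv 1$ is a \emph{minimiser}. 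I would then write the Lagrangian for minimising $\Phi(m):=\sum_k d_{k,k+1}/\theta_{k,k+1}(m_k/\pi_k,m_{k+1}/\pi_{k+1})$ subject to $\sum_k m_k=1$, and impose $\partial_{m_k}\Phi = \text{const}$ at $m=\pi$. Differentiating, the term $k$ contributes $-d_{k,k+1}\partial_1\theta_{k,k+1}(1,1)/\pi_k = -d_{k,k+1}\lambda_{k,k+1}/\pi_k$ and the term $k-1$ contributes $-d_{k-1,k}\partial_2\theta_{k-1,k}(1,1)/\pi_k = -d_{k-1,k}\lambda_{k-1,k}/\pi_k = -d_{k-1,k}\lambda_{k,k-1}/\pi_k$ — wait, one must track $\lambda_{k-1,k}$ vs $\lambda_{k,k-1}=1-\lambda_{k-1,k}$; I would use the convention $\lambda_{k,k-1}=1-\lambda_{k-1,k}$ fixed in Definition \ref{def:COM-and-ISO} and the relation $\partial_2\theta_{k-1,k}(1,1)=1-\partial_1\theta_{k-1,k}(1,1)=1-\lambda_{k-1,k}=\lambda_{k,k-1}$. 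Hence stationarity reads
\[
\frac{\lambda_{k,k+1}d_{k,k+1}+\lambda_{k,k-1}d_{k-1,k}}{\pi_k} = c \qquad\text{for all }k,
\]
for some constant $c$. Summing over $k$: the numerator sums to $\sum_k d_{k,k+1}=1$ (after reindexing), while $\sum_k \pi_k \cdot c$... actually summing $\pi_k\cdot(\text{the equation})$ gives $\sum_k(\lambda_{k,k+1}d_{k,k+1}+\lambda_{k,k-1}d_{k-1,k}) = c\sum_k\pi_k = c$, and the left side is $1$, so $c=1$. Therefore $\lambda_{k,k+1}d_{k,k+1}+\lambda_{k,k-1}d_{k-1,k}=\pi_k$ for every $k$, which is precisely the isotropy condition \eqref{eq:iso}.

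\textbf{Main obstacle.} The delicate point in part \eqref{it:c-implies-iso} is justifying that $\rho\equiv 1$ is genuinely a \emph{minimiser} (not merely a critical point) so that the first-order condition is valid: this is exactly what $c^\star(\theta,\cT)=1$ gives us via Proposition \ref{prop:c-less-than-one}, since the competitor $m=\pi$ attains the value $1$. One must also handle the constraint $m\in\cP(\cT)$ correctly — at $m=\pi$ all coordinates are strictly positive and in the interior of the simplex, so the only active constraint is $\sum_k m_k=1$ and a clean Lagrange multiplier argument applies; the $C^1$ hypothesis on the $\theta_{k,k+1}$ is what makes $\Phi$ differentiable there. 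For part \eqref{it:iso-implies-c} the only care needed is the correct reindexing identity $\sum_k d_{k,k+1}\lambda_{k+1,k}\rho_{k+1} = \sum_k d_{k-1,k}\lambda_{k,k-1}\rho_k$ and the normalisation $\sum_k d_{k,k+1}=1$.
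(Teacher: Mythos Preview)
Your proposal is correct and follows essentially the same approach as the paper. For part \eqref{it:iso-implies-c}, the paper uses Jensen's inequality for $t\mapsto 1/t$ (with weights $d_{k,k+1}$ summing to $1$) in place of your Cauchy--Schwarz/Titu step, but the two are equivalent here; for part \eqref{it:c-implies-iso}, the paper takes directional derivatives along the perturbations $\gamma^k_\alpha=\pi+\alpha(e_k-e_{k+1})$ to obtain $B_{k+1}-B_k=0$ with $B_k=(\lambda_{k,k-1}d_{k-1,k}+\lambda_{k,k+1}d_{k,k+1})/\pi_k$, which is exactly your Lagrange-multiplier condition $\partial_{m_k}\Phi=\text{const}$ rephrased, and then sums to identify the constant as $1$ just as you do.
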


\begin{remark}[Minimum mean]\label{rem:minimum}
In view of Proposition \ref{prop:com-iso-char}, every mesh $\cT$ satisfies the isotropy condition for a suitable choice of $\{\lambda_k\}$. Since  the minimum mean $(a,b) \mapsto \min\{a,b\}$ is $\lambda$-balanced for any value of $\lambda \in [0,1]$, it thus follows from Theorem \ref{thm: equivalence isotropy condition} that $c^\star(\theta, \cT) = 1 $ if $\theta_{k,k+1} = \min$ for each $k$.
\end{remark}

\begin{proof}
To prove \eqref{it:iso-implies-c}, take any sequence $\{m_k\}_k$ with $\sum_{k=0}^{K-1} m_k = 1$.
Using Jensen's inequality, the adaptedness, the periodicity, and the isotropy condition, we obtain
\begin{align*}
\sum_{k=0}^{K-1} \frac{d_{k,k+1}}{\theta_{k,k+1} \Big( \frac{m_k}{\pi_k} , \frac{m_{k+1}}{\pi_{k+1}}\Big)}
	& \geq \Bigg(
\sum_{k=0}^{K-1} d_{k,k+1} 
	\theta_{k,k+1} \bigg( \frac{m_k}{\pi_k}, 
					\frac{m_{k+1}}{\pi_{k+1}} \bigg) \Bigg)^{-1}
\\& \geq
	\Bigg(
	\sum_{k=0}^{K-1} d_{k,k+1}
	\bigg( \lambda_{k,k+1} \frac{m_k}{\pi_k}
		+ \lambda_{k+1,k} \frac{m_{k+1}}{\pi_{k+1}} \bigg)
	\Bigg)^{-1}
\\& =
	\Bigg(
	\sum_{k=0}^{K-1} \frac{m_k}{\pi_k}
	\big( \lambda_{k,k+1} d_{k,k+1}
		+  \lambda_{k,k-1} d_{k-1,k}   \big)
	\Bigg)^{-1}
\\&
=
	\Bigg(
	\sum_{k=0}^{K-1} m_k
	\Bigg)^{-1}
	= 1 \ .
\end{align*}
Taking the infimum over $\{m_k\}_k$, we obtain $c^\star(\theta, \cT) \geq 1$. In view of Proposition \ref{prop:c-less-than-one} we infer that $c^\star(\theta, \cT) = 1$.

To prove \eqref{it:c-implies-iso}, we consider the probability measures $\gamma_\alpha^k$ defined by
\begin{align*}
\gamma^k_\alpha = \left(
	\pi_0, \ldots ,
	\pi_{k-1},
	\pi_k + \alpha,
	\pi_{k+1} - \alpha,
	\pi_{k+2}, \ldots ,
	\pi_{K-1} \right)
\end{align*}
for $|\alpha|$ sufficiently small.
Let us write
\begin{align*}
	 h_{\theta,\cT}(m)
	 	= \sum_{k=0}^{K-1} \frac{d_{k,k+1}}{\theta_{k,k+1} \Big( \frac{m_k}{\pi_k} , \frac{m_{k+1}}{\pi_{k+1}} \Big)}  \ .
\end{align*}
As $c^\star(\theta, \cT) = 1$, we have $h_{\theta,\cT}(m) \geq 1$ for all $m$. Thus, since $h_{\theta,\cT}(\gamma^k_0) = h_{\theta,\cT}(\pi) = 1$, it follows that
$\frac{\rm d}{{\rm d} \alpha} \big|_{\alpha = 0} h_{\theta,\cT}(\gamma^k_\alpha) = 0$.
A direct computation shows that
\begin{align*}
\frac{\rm d}{{\rm d} \alpha}
	\bigg|_{\alpha = 0} h_{\theta,\cT}(\gamma^k_\alpha)
	=
	B_{k+1} - B_k
\quad \text{ where } \quad B_k := \frac{ \lambda_{k,k-1} d_{k-1,k} + \lambda_{k,k+1} d_{k,k+1}}{\pi_k} \ .
\end{align*}
As this holds for every $k$, we infer that there exists a constant $\beta > 0$ such that
$B_k = \beta$ for every $k = 0, \ldots, K-1$.
The latter means that
\begin{align*}
\beta \pi_k = \lambda_{k,k-1} d_{k-1,k} + \lambda_{k,k+1} d_{k,k+1}
\end{align*}
for all $k=0, \ldots, K-1$.
Summation over $k$ yields
\begin{align*}
\beta & = \beta \sum_{k=0}^{K-1} \pi_k
	 	= \sum_{k=0}^{K-1}
 	  	  	(1 - \lambda_{k-1,k} ) d_{k-1,k}
			 + \lambda_{k,k+1} d_{k,k+1}
	    = \sum_{k=0}^{K-1}
			 d_{k-1,k}
		= 1 \ ,
\end{align*}
which proves the isotropy condition with parameters $\{ \lambda_{k, k+1} \}_k$.
\end{proof}

\section{Proof of the lower bound}
\label{sec:lower}

The goal of this section is to prove Theorem \ref{thm:lowerbound_quantitative}, which yields the lower bound in Theorem \ref{thm:main}.
The crucial ingredient is Proposition \ref{prop: regularization at the discrete level}, which ensures the existence of approximately optimal curves with good regularity properties.

To formulate this result, we fix a non-negative function $\eta \in C_{\rm c}^\infty(0, \frac12)$ with $\int_0^1 \eta(x) \dd x = 1$.
We set $\eta_\lambda(x) = \frac{1}{\lambda} \eta(\frac{x}{\lambda})$ for $x \in [0,1)$, and consider its periodic extension to $\cS^1$.
For $\lambda \in (0,1]$ we define a discrete spatial mollifier by
\begin{align*}
\eta_\lambda^N(n) := \frac{N}{\lambda} \int_{\frac{n}{N}}^{\frac{n+1}{N}} \eta\Big(\frac{x}{\lambda}\Big) \dd x \ , \quad n = 0, \ldots, N - 1 \ ,
\end{align*}
and we extend $\eta_\lambda^N$ to $\Z$ periodically modulo $N$, so that it can be regarded as a function on the discrete torus $\bT_N = \Z / N \Z$.
It follows that $\frac{1}{N} \sum_{n=0}^{N-1} \eta_\lambda^N(n) = 1$, and the following kernel bounds hold for $n = 0, \ldots, N-1$:
\begin{align}
	\label{eq:kernel-bounds}
	 |\eta_\lambda^N(n)| \leq \frac{\|\eta\|_\infty}{\lambda} \ , \qquad
	 |\eta_\lambda^N(n_1) - \eta_\lambda^N(n_2) | \leq \frac{\|\eta'\|_\infty}{\lambda^2}\frac{|n_1 - n_2|}{N} \ ,
\end{align}
We consider the convolution operators $\bM_\lambda : L^1(\cS^1) \to L^\infty(\cS^1)$ given by
\begin{align*}
	\big(\bM_\lambda f \big)(x) = \int_{\cS^1} \eta_\lambda(x-y) f(y)  \dd y  \ ,
\end{align*}
as well as the analogous discrete convolution operators $\cM_\lambda^N : L^1(\bT_N) \to L^\infty(\bT_N)$ defined by
\begin{align*}
	\big(\cM_\lambda^N \psi\big)(n) = \frac{1}{N} \sum_{j=0}^{N-1} \eta_\lambda^N(n-j) \psi(j) \ .
\end{align*}
The kernel bounds \eqref{eq:kernel-bounds} imply the following $L^1$-$L^\infty$ and $L^1$-Lipschitz bounds:
\begin{align}
\label{eq:L1-Linf-bound}
	\sup_n |\cM_\lambda^N \psi(n)| & \leq \frac{\|\eta\|_\infty}{\lambda N}
					\sum_{n=0}^{N-1} |\psi(n)| \ , \\
\label{eq:L1-Lip-bound}
	\sup_n |\cM_\lambda^N \psi(n_1) - \cM_\lambda^N \psi(n_2)|
		& \leq \frac{\|\eta'\|_\infty}{\lambda^2} \frac{|n_1 - n_2|}{N^2}
					\sum_{n=0}^{N-1} |\psi(n)| \ .
\end{align}

The following result contains some basic properties of convolution operators that will be used in the sequel. 

\begin{lemma}[Bounds for convolution operators]\label{lem:commutator}
Let $\lambda \in (0,1]$ and $N \geq 2$. For any $\mu \in \cP(\cS^1)$ and $m \in \cP(\bT_N)$ we have
\begin{align}
\label{eq:conv-bound-1}
	\bW(\mu, \bM_\lambda \mu) & \leq C \lambda \ , \\
\label{eq:conv-bound-2}
	\bW( \iota_N \cM_\lambda^N m, \bM_\lambda \iota_N  m )
			& \leq \frac{\lambda}{2} + \frac{2}{N} \ ,
\end{align}
where $C < \infty$ depends only on $\eta$.
\end{lemma}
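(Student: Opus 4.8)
The plan is to estimate the two Wasserstein distances separately, in each case by exhibiting an explicit coupling (transport plan) between the two measures and bounding the transport cost. For \eqref{eq:conv-bound-1}, I would use the standard fact that $\bM_\lambda\mu$ is obtained from $\mu$ by averaging against the kernel $\eta_\lambda$, which is supported in an interval of length $\leq\lambda/2$ (since $\eta\in C_c^\infty(0,\tfrac12)$, so $\eta_\lambda$ is supported in $(0,\lambda/2)$). Concretely, $\bM_\lambda\mu = \int_{\cS^1}(\tau_y)_\#\mu\,\eta_\lambda(y)\dd y$ where $\tau_y(x)=x-y$ is translation; since $\bW$ is convex in each argument and $\bW(\mu,(\tau_y)_\#\mu)\leq|y|\leq\lambda/2$ for $y\in\spt\eta_\lambda$, Jensen's inequality gives $\bW(\mu,\bM_\lambda\mu)\leq\int|y|\,\eta_\lambda(y)\dd y\leq C\lambda$ with $C$ depending only on $\eta$ (in fact $C=\tfrac12$ works, but a cruder constant is fine). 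This step is routine.

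For \eqref{eq:conv-bound-2}, the key point is that discrete convolution $\cM_\lambda^N$ and continuous convolution $\bM_\lambda$ almost commute with the embedding $\iota_N$. I would compare both $\iota_N\cM_\lambda^N m$ and $\bM_\lambda\iota_N m$ to a common intermediate object, or estimate their difference directly. Write $m=\sum_n m(n)\delta_n$ (abusing notation for the discrete torus). Then $\iota_N m=\sum_n m(n)\cU_{[n/N,(n+1)/N)}$, so $\bM_\lambda\iota_N m=\sum_n m(n)\,\bM_\lambda\cU_{[n/N,(n+1)/N)}$. On the other hand $\iota_N\cM_\lambda^N m=\sum_n(\cM_\lambda^N m)(n)\cU_{[n/N,(n+1)/N)}=\sum_n\tfrac1N\sum_j\eta_\lambda^N(n-j)m(j)\,\cU_{[n/N,(n+1)/N)}$. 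Unwinding the definition of $\eta_\lambda^N$, $\tfrac1N\eta_\lambda^N(n-j)=\int_{(n-j)/N}^{(n-j+1)/N}\eta_\lambda(x)\dd x$, so $\iota_N\cM_\lambda^N m = \sum_j m(j)\big[\sum_n\big(\int_{(n-j)/N}^{(n-j+1)/N}\eta_\lambda\big)\cU_{[n/N,(n+1)/N)}\big]$, i.e. a piecewise-constant approximation of $\bM_\lambda$ applied to the point mass $\delta_{j/N}$ rather than to $\cU_{[j/N,(j+1)/N)}$. By linearity and convexity of $\bW$ it therefore suffices to bound $\bW$ between the two kernels-applied-to-one-cell; splitting via the triangle inequality, one controls (a) the error from replacing $\cU_{[j/N,(j+1)/N)}$ by $\delta_{j/N}$, which costs $\leq\tfrac1N$, (b) the error from replacing the continuous convolution $\bM_\lambda\delta_{j/N}$ (a measure with density $\eta_\lambda(\cdot-j/N)$) by its cell-averaged version, and this is again a local rearrangement within cells of size $\tfrac1N$, costing $\leq\tfrac1N$; an additional $\tfrac1N$-type term may arise from the offset in the index $n-j$ versus the location of the cell. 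Summing against the probability weights $m(j)$ and being slightly generous with constants yields the bound $\tfrac\lambda2+\tfrac2N$ — though I would expect the honest extraction to give something like $C/N$, and matching the stated constant $\tfrac\lambda2+\tfrac2N$ exactly may require care in bookkeeping (the $\tfrac\lambda2$ presumably absorbs a term bounded by the kernel support length $\leq\lambda/2$ rather than by $1/N$ in a regime where $1/N$ is not small).

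The main obstacle is the precise bookkeeping in \eqref{eq:conv-bound-2}: one must carefully track how the discrete convolution, defined cell-by-cell, differs from the continuous convolution of the embedded measure, and organize the comparison so that every discrepancy is either a within-a-cell rearrangement (cost $O(1/N)$) or controlled by the kernel support ($O(\lambda)$). A clean way to do this is to build a single coupling: transport mass from $\iota_N\cM_\lambda^N m$ to $\bM_\lambda\iota_N m$ by, for each source cell $j$, spreading $m(j)$ according to the discretized kernel and then "smoothing" each cell's worth of mass to the continuous kernel profile; the cost of each such elementary move is at most the cell width plus the smoothing displacement, and one argues the total is $\leq\tfrac\lambda2+\tfrac2N$. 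I do not anticipate conceptual difficulty, only the need for a careful, patient triangle-inequality argument; I would present it by reducing everything via convexity of $\bW$ to a single source cell and then estimating that elementary transport cost explicitly.
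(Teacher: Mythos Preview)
Your approach is correct, and for \eqref{eq:conv-bound-1} it coincides with the paper's (the coupling $\gamma(\dd x,\dd y)=\eta_\lambda(y-x)\dd\mu(x)\dd y$ is exactly the translation-averaging you describe).

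For \eqref{eq:conv-bound-2} you and the paper both reduce by convexity to $m=\delta_i$, but then diverge. The paper's argument is a single stroke: since $\supp\eta_\lambda\subset(0,\tfrac{\lambda}{2})$, both $\iota_N\cM_\lambda^N\delta_i$ and $\bM_\lambda\iota_N\delta_i$ are supported in the interval $\big[\tfrac{i}{N},\tfrac{i+1}{N}+\tfrac{\lambda}{2}\big]$, and $\bW$ between two probability measures is bounded by the diameter of the union of their supports. This is where the $\tfrac{\lambda}{2}$ in the stated constant comes from. Your triangle-inequality route through the intermediate $\bM_\lambda\delta_{i/N}$ is more laborious but in fact yields a \emph{sharper} bound: in your step (b), the measures $\bM_\lambda\delta_{i/N}$ and $\iota_N\cM_\lambda^N\delta_i$ assign \emph{identical} mass to each cell $[\tfrac{n}{N},\tfrac{n+1}{N})$ --- namely $\int_{(n-i)/N}^{(n-i+1)/N}\eta_\lambda$ --- so the transport is a genuine within-cell rearrangement costing $\leq\tfrac{1}{N}$, with no offset term. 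Together with step (a) (which uses that $\bM_\lambda$ is a $\bW$-contraction) you get $\bW\leq\tfrac{2}{N}$ outright, with no $\tfrac{\lambda}{2}$ contribution at all. So your ``main obstacle'' dissolves and the bookkeeping is cleaner than you feared; the paper's diameter argument trades the sharper constant for brevity.
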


\begin{proof}
The inequality \eqref{eq:conv-bound-1} follows straightforwardly using the coupling $\gamma(\ddd x, \ddd y) = \eta_\lambda(y-x) \dd \mu(x) \dd y$. 

To prove \eqref{eq:conv-bound-2}, let $\delta_i$ be the Dirac mass at $i$, and note that
\[
	\bW^2( \iota_N \cM_\lambda^N m, \bM_\lambda \iota_N  m )
	\leq
	\sum_{i=0}^{N-1} m_i
		\bW^2( \iota_N \cM_\lambda^N \delta_i, \bM_\lambda \iota_N  \delta_i) 
\]
by convexity of $\bW^2$.
Thus it suffices to prove the lemma for $m = \delta_i$. 
Since
	$\dd \big( \iota_N  \delta_i \big)(x) 
		= N \one_{\big[ \frac{i}{N}, \frac{i+1}{N}  \big]}(x) \dd x$, 
	we have
\begin{equation} \label{eq:MQdelta}
	\dd \big( \bM_\lambda \iota_N  \delta_i \big) (x) 
		= N \Big( \eta_{\lambda} 
			* \one_{\left[ \frac{i}{N}, \frac{i+1}{N} \right]} \Big) (x) \dd x \ .
\end{equation}
On the other hand, we have
\begin{align*}
 \label{eq:QMdelta}
	\dd \big( \iota_N \cM_\lambda^N \delta_i \big) (x) 
		& = \sum_{n=0}^{N-1} \eta_{\lambda}^N(n-i)
			\one_{\left[ \frac{n}{N}, \frac{n+1}{N} \right]} (x) \dd x \ . 
\end{align*}
Since $\supp \eta_{\lambda} \subset \big( 0 , \frac{\lambda}{2} \big)$, we obtain
\begin{align*} 
	\supp \bM_\lambda \iota_N  \delta_i 
		\subseteq \big[\tfrac{i}{N}, 
				\tfrac{i+1}{N} + \tfrac{\lambda}{2}\big]  \tand
	 \supp \iota_N \cM_\lambda^N \delta_i 
		\subseteq \big[\tfrac{i}{N}, \tfrac{i+1}{N} 
							  + \tfrac{\lambda}{2}\big] \ ,
\end{align*}
hence
\begin{align*}
	\diam\big(\supp ( \iota_N \cM_\lambda^N \delta_i ) 
		\cup \supp ( \bM_\lambda \iota_N  \delta_i ) \big)
		\leq \frac{\lambda}{2} + \frac{2}{N} \ .
\end{align*}
This easily yields the desired result.
\end{proof}

Before stating the crucial regularisation result, we formulate a lemma which asserts that we can decrease the energy at the discrete level by a suitable regularisation. Here it is crucial that the regularisation is performed by averaging the density at spatial locations $n K + k$ and $n' K + k$ that differ by a multiple of the period $K$. 
A ``naive'' regularisation consisting of locally averaging the density, without taking the periodic structure into account, would in general not decrease the energy.  
We emphasise that the operator $\cM_\lambda^N$ is understood to act on the variable $n$ in the result below, namely
\begin{align*}
	(\cM_\lambda^N m) (n;k) = \frac{1}{N} \sum_{j=0}^{N-1} \eta_\lambda^N(n-j) m(j;k). 
\end{align*}
With this notation we have the following result.
\begin{lemma}[Energy bound under periodic smoothing]\label{lem:contr}
Let $\lambda \in (0, 1]$. For any $m \in \cP(\cT_N)$ and any $J \in \cV(\cT_N)$ we have
\begin{align*}
		\cA_N(\cM_\lambda^N m,\cM_\lambda^N J) 
			 \leq
		\cA_N(m, J) \ .
\end{align*}
\end{lemma}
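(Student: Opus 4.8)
The plan is to deduce the inequality from two facts: the joint convexity of the energy density appearing in $\cA_N$, and the observation that $\cM_\lambda^N$ averages only in the $n$-variable, with weights that do not depend on the edge. First I would recall that on the edge joining $A_{n;k}$ and $A_{n;k+1}$ the integrand of $\cA_N$ is the function $(a,b,c)\mapsto F\big(\theta_{k,k+1}(\tfrac{Na}{\pi_k},\tfrac{Nb}{\pi_{k+1}}),c\big)$, and that this function is \emph{jointly convex} and lower semicontinuous on $\R_+\times\R_+\times\R$: indeed $F(\rho,c)=c^2/\rho$ is jointly convex, lower semicontinuous and non-increasing in $\rho$ (it is the perspective of $c\mapsto c^2$), each $\theta_{k,k+1}$ is concave by Definition~\ref{def:mean}, and pre-composition with the linear map $(a,b)\mapsto(Na/\pi_k,Nb/\pi_{k+1})$ preserves convexity; this is precisely the convexity underlying \cite[Theorem~3.2]{Erbar-Maas:2012}. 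I would also record that, with $w_{n,j}:=\tfrac1N\eta_\lambda^N(n-j)$, one has $w_{n,j}\ge 0$, $\sum_j w_{n,j}=1$, $\sum_n w_{n,j}=1$, and that $(\cM_\lambda^N m)(n;k)=\sum_j w_{n,j}m(j;k)$ and $(\cM_\lambda^N J)(n;k,k+1)=\sum_j w_{n,j}J(j;k,k+1)$ hold simultaneously for all $k$; in particular $\cM_\lambda^N m\in\cP(\cT_N)$ and $\cM_\lambda^N J\in\cV(\cT_N)$, so the left-hand side is well defined.

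Then the estimate follows by applying Jensen's inequality edge by edge. For fixed $n$ and $k$, the value of the density at $(\cM_\lambda^N m,\cM_\lambda^N J)$ on the edge $(n;k,k+1)$ is, by Jensen applied with the probability weights $(w_{n,j})_j$, at most $\sum_j w_{n,j}\Phi_k(j)$, where $\Phi_k(j)$ denotes the density evaluated at $(m,J)$ on the edge $(j;k,k+1)$. Multiplying by $d_{k,k+1}$, summing over $n$ and $k$, dividing by $N$, then interchanging the sums over $n$ and $j$ and using $\sum_n w_{n,j}=1$ for each $j$, one arrives at $\cA_N(\cM_\lambda^N m,\cM_\lambda^N J)\le\cA_N(m,J)$, which is the claim. (If the right-hand side is $+\infty$ there is nothing to prove; otherwise Jensen's inequality for the convex, lower semicontinuous, $[0,+\infty]$-valued density applies without further ado.)

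I do not expect a genuine obstacle in this argument; the one point that must be handled with care — and which is the entire reason the lemma is stated with \emph{periodic} smoothing — is that $\cM_\lambda^N$ couples only cells $nK+k$ and $jK+k$ differing by a multiple of the period $K$, so that the numerator $J$ and \emph{both} density arguments of $\theta_{k,k+1}$ are convex-combined with the same weights and scalar Jensen applies to $f_{k,k+1}$ as a single convex function of three variables. A local average that mixed different residues $k$ would destroy this simultaneity, and the energy need not decrease; this is exactly the mechanism producing the non-trivial effective mobility $c^\star(\theta,\cT)<1$.
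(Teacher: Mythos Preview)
Your argument is correct and is essentially the same as the paper's: both apply Jensen's inequality to the jointly convex density $f_{k,k+1}$ using the probability weights $w_{n,j}=\tfrac1N\eta_\lambda^N(n-j)$, and then interchange the $n$- and $j$-sums using $\sum_n w_{n,j}=1$. Your additional remarks on why $f_{k,k+1}$ is convex and on the necessity of averaging only within a fixed residue class $k$ are accurate and make explicit what the paper leaves implicit.
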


\begin{proof}
For brevity we write
\begin{align*}
	G_{k,k+1}(m,J,n) = \frac{d_{k,k+1}}{N} f_{k,k+1}\bigg( N \frac{m(n;k)}{\pi_k}, N \frac{m(n;k+1)}{\pi_{k+1}}, J(n;k,k+1) \bigg) \ .
\end{align*}
Applying Jensen's inequality to the jointly convex functions $f_{k,k+1}$ we obtain

\begin{align*}
	\cA_N(\cM_\lambda^N m , \cM_\lambda^N J) 
	&= \sum_{k=0}^{K-1} \sum_{n=0}^{N-1} 
		G_{k,k+1}( \cM_\lambda^N m, \cM_\lambda^N J, n)  \\
	& \leq \sum_{k=0}^{K-1} \sum_{n=0}^{N-1} \frac{1}{N} \sum_{\ell=0}^{N-1} \eta_{\lambda}^N (n-\ell) G_{k,k+1}(m,J,\ell)  \\
	& = \sum_{k=0}^{K-1} \sum_{\ell=0}^{N-1} \Bigg( \frac{1}{N} \sum_{n=0}^{N-1} \eta_{\lambda}^N (n-\ell) \Bigg)  G_{k,k+1}(m,J,\ell) \\
	& = \sum_{k=0}^{K-1} \sum_{\ell=0}^{N-1} G_{k,k+1}(m,J,\ell) = \cA_N(m,J) \ ,
\end{align*}
where we used that $\frac{1}{N} \sum_{n=0}^{N-1} \eta_{\lambda}^N(n) =1$.
\end{proof}

We are now ready to state the main regularisation result of this section. 
As we expect that (approximately) optimal densities exhibit oscillations, we cannot expect spatial regularity for such densities.
Nevertheless, the lemma above allows us to obtain a restricted form of regularity for such densities, in the sense that good Lipschitz bounds hold if one only compares values of the density at spatial locations $n K + k$ and $n' K + k$ that differ by a multiple of the period $K$.

Note that the vector field $J$ enjoys better regularity properties: in \eqref{eq: reg4} we even obtain a Lipschitz bound for neighbouring cells.

\begin{proposition}[Space-time regularisation] \label{prop: regularization at the discrete level}
Fix $N \geq 1$, and let $(m_t, J_t)_t$ be a solution to the discrete continuity equation \eqref{eq:cont-eq} in $\cP(\cT_N)$ satisfying
\begin{align*}
	A := \int_0^1 \cA_N(m_t, J_t) \dd t < \infty \ .
\end{align*}
Then, for any $\eps > 0$ there exists a solution $(\tilde m_t, \tilde J_t)_t$ to \eqref{eq:cont-eq} such that:
\begin{enumerate}
\item $\bW(\iota_N \tilde m_t, \iota_N m_t) \leq \eps + \frac{C}{N}$ for all $t \in [0,1]$, where $C < \infty$ depends only on $\cT$;
\item the following action bound holds:
\begin{equation} \label{eq: reg5}
\int_0^1 \cA_N(\tilde m_t, \tilde J_t) \dd t
		\leq \int_0^1 \cA_N(m_t, J_t) \dd t  \ ;
\end{equation}
\item the following regularity properties hold, for some constants $c_0, \ldots, c_5 < \infty$ depending on $\eps$ and $A$, but not on $N$:
\begin{subequations}
\begin{align}
\label{eq: reg0}
	c_0^{-1}
	\leq \min_{n,k} N \tilde m_t(n;k)
	\leq \max_{n,k} N  \tilde m_t(n;k)
	& \leq c_1 \ , \\
\label{eq: reg1}
	\sup_{t \in [0,1]} \max_{n,k}
		\big| N \partial_t \tilde m_t^{N}(n;k) \big|
		 & \leq c_2 \ , \\
\label{eq: reg2}
	\sup_{t \in [0,1]} \max_{n,k}
		\big| N \tilde m_t(n;k) - N \tilde m_t(n+1;k) \big|
		 & \leq \frac{c_3}{N} \ , \\
\label{eq: reg3}
	\sup_{t \in [0,1]} \max_{n,k}
		\big| \tilde J_t^{N}(n;k,k+1) \big|
		 & \leq c_4 \ , \\
\label{eq: reg4}
	\sup_{t \in [0,1]} \max_{n,k}
		\big| \tilde J_t(n;k,k+1) - \tilde J_t(n;k-1,k) \big|
		 & \leq \frac{c_5}{N} \ .
\end{align}
\end{subequations}
\end{enumerate}
\end{proposition}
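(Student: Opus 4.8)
The plan is to regularise in several successive steps, each of which decreases (or only mildly increases) the action and moves the endpoints only slightly in $\bW$. First I would perform a \emph{time reparametrisation and mollification in time}: replacing $(m_t,J_t)$ by a time-averaged curve $t \mapsto \int \chi_\tau(t-s)(m_s, J_s)\dd s$ (extending the curve to be constant outside $[0,1]$, or working on a slightly enlarged time interval and then rescaling) improves time-regularity of $m$, and by Jensen applied to the jointly convex integrand it does not increase $\int_0^1 \cA_N\dd t$. At the same time, mixing with a small multiple of a fixed smooth reference curve (say the constant curve at $\pi_N$ joined to the endpoints by geodesics of controlled length, using Proposition \ref{prop:rough-upper-bound}) produces the strict lower bound $N\tilde m_t(n;k)\geq c_0^{-1}$ in \eqref{eq: reg0}; the upper bound in \eqref{eq: reg0} then follows because total mass is $1$ and, once a lower bound is known, the continuity equation together with an action bound controls oscillations. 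All these operations cost at most $O(\eps)$ in $\bW$ at every time and at most a factor $(1+\eps)$ in action, which can be absorbed after a final rescaling of the curve.

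The heart of the argument is the \emph{periodic spatial smoothing} via $\cM_\lambda^N$, acting on the variable $n$. By Lemma \ref{lem:contr}, applying $\cM_\lambda^N$ to both $m$ and $J$ does not increase $\cA_N$, and it commutes with the discrete continuity equation (since $\cM_\lambda^N$ acts only on $n$ and $\partial_t$, the difference operators in $k$, and convolution in $n$ all commute), so $(\cM_\lambda^N m_t, \cM_\lambda^N J_t)_t$ is again an admissible curve with no larger action. The gain is quantitative regularity in $n$: the $L^1$-Lipschitz bound \eqref{eq:L1-Lip-bound} gives, for fixed $k$,
\begin{align*}
	\big| N\cM_\lambda^N m_t(n;k) - N\cM_\lambda^N m_t(n+1;k)\big|
	\leq \frac{\|\eta'\|_\infty}{\lambda^2}\frac{1}{N}\cdot N\sum_{j}m_t(j;k)
	\leq \frac{\|\eta'\|_\infty}{\lambda^2 N}\,,
\end{align*}
which is \eqref{eq: reg2} with $c_3 = \|\eta'\|_\infty/\lambda^2$; and \eqref{eq:L1-Linf-bound} gives the (crude) upper bound in \eqref{eq: reg0}. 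The displacement cost of this step is controlled by Lemma \ref{lem:commutator}, inequality \eqref{eq:conv-bound-2}: $\bW(\iota_N\cM_\lambda^N m_t,\bM_\lambda \iota_N m_t)\leq \lambda/2 + 2/N$, combined with $\bW(\iota_N m_t,\bM_\lambda\iota_N m_t)\leq C\lambda$ from \eqref{eq:conv-bound-1}, so for $\lambda$ small depending on $\eps$ we stay within $\eps + C/N$ in $\bW$, as claimed in (1). One must order the steps so that the time-mollification/mixing step (fixing the lower bound $c_0^{-1}$) is done \emph{after} spatial smoothing, or verify that $\cM_\lambda^N$ preserves the lower bound — it does, since $\cM_\lambda^N$ is an averaging operator with nonnegative kernel of total mass $1$.

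To obtain the remaining bounds \eqref{eq: reg1}, \eqref{eq: reg3}, \eqref{eq: reg4} for $J$, I would argue as follows. Once \eqref{eq: reg0} holds, the integrand $\cA_N$ is comparable to $\frac1N\sum_{n,k}d_{k,k+1}J_t(n;k,k+1)^2$ (up to constants depending on $c_0,c_1$ and $\cT$), so $\int_0^1\sum_{n,k}J_t(n;k,k+1)^2\dd t \lesssim N A$; this is an $\ell^2$-in-$(n,k)$, $L^2$-in-time bound on $J$, not yet a pointwise bound. Here I need a further regularisation of $J$: after fixing $m$ (now smooth in $t$ and periodically smooth in $n$, with densities bounded above and below), the optimal $J$ for this $m$ — i.e. the minimiser of $\cA_N(m_t,\cdot)$ subject to the continuity equation $\partial_t m_t(n;k) + J(n;k,k+1)-J(n;k-1,k)=0$ — is given by a discrete elliptic problem on the torus $\bT_N\times\Z/K\Z$ with coefficients $\theta_{k,k+1}(Nm/\pi_\cdot,Nm/\pi_\cdot)$ that are themselves bounded above and below and periodically Lipschitz in $n$ with the bound just obtained. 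Replacing $\tilde J$ by this optimal $\tilde J$ only decreases the action (and does not touch $m$ or the $\bW$-estimate), and then \eqref{eq: reg1} is immediate from \eqref{eq: reg2} (differentiate the smoothed $m$ in $t$, using that the time-mollified curve has $\partial_t m$ controlled in $L^2_t$, and upgrade to $L^\infty_t$ by one more time-mollification), while \eqref{eq: reg3} and the neighbour-Lipschitz bound \eqref{eq: reg4} follow from discrete elliptic regularity for this one-dimensional-in-$k$ divergence-form equation: solving for $\tilde J(n;k,k+1)$ in terms of $\tilde J(n;k-1,k)$ and $\partial_t\tilde m$ by the continuity equation, summing over $k$ around the period (and using that $\sum_k \partial_t\tilde m(n;k)$ over a period controls the "flux constant"), and invoking the lower bound on $\theta$ to convert the controlled flux into a pointwise bound on each $\tilde J(n;k,k+1)$; the difference $\tilde J(n;k,k+1)-\tilde J(n;k-1,k) = -\partial_t\tilde m(n;k)$ is then bounded by $c_2$, giving \eqref{eq: reg4} with $c_5$ absorbing the extra $N$ (indeed $\partial_t\tilde m(n;k) = O(1/N)$ after correct normalisation, since $\sum_{n,k}\tilde m = 1$ forces the right scaling).

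\textbf{Main obstacle.}
The delicate point is not any single estimate but the \emph{bookkeeping of the order of operations}: each regularisation (time-mollification, convex mixing with a reference curve to get strict positivity, periodic spatial smoothing $\cM_\lambda^N$, replacement of $J$ by the $m$-optimal field) must be shown to preserve — or not badly spoil — the properties established by the previous ones, all while keeping the cumulative $\bW$-displacement below $\eps + C/N$ and the action non-increasing (after a harmless final $(1+\eps)$-rescaling of the time interval, which is why \eqref{eq: reg5} is stated without an $\eps$). In particular one must check that the final $J$-optimisation step does not destroy the periodic Lipschitz bound \eqref{eq: reg2} on $m$ (it does not, since it leaves $m$ untouched) and that smoothing in $n$ genuinely commutes with the $k$-difference operators in the continuity equation (it does, as the two act on different variables). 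I expect verifying these commutations and the quantitative dependence of the constants $c_0,\dots,c_5$ on $\eps$ and $A$ (but crucially \emph{not} on $N$) to be the bulk of the technical work.
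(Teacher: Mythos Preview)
Your overall plan is viable, but it diverges from the paper's argument in two ways that make it considerably more involved than necessary.

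\textbf{One combined regularisation instead of sequential steps.} The paper writes down a \emph{single} explicit formula
\[
\tilde m_t = \frac{1}{2\tau}\int_{t-\tau}^{t+\tau}\cM_\lambda^N\big[(1-\delta)m_u+\delta\,\cU_N\big]\dd u,
\qquad
\tilde J_t = \frac{1-\delta}{2\tau}\int_{t-\tau}^{t+\tau}\cM_\lambda^N J_u\dd u,
\]
combining convex mixing with $\cU_N$, spatial convolution $\cM_\lambda^N$, and time averaging, applied to $m$ and $J$ simultaneously. This instantly gives \eqref{eq: reg5} as a \emph{genuine inequality} by three applications of joint convexity (the mixing step uses that $\cA_N(\cU_N,0)=0$), so no $(1+\eps)$ factor ever appears and no time-rescaling is needed. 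Your concern about ``joining to the endpoints by geodesics'' is misplaced: the statement only asks that $\bW(\iota_N\tilde m_t,\iota_N m_t)\leq\eps+C/N$ at every $t$, so the endpoints are allowed to move, and mixing with the constant pair $(\cU_N,0)$ does the job at $\bW$-cost $O(\sqrt\delta)$ with no action penalty.

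\textbf{No $J$-optimisation or elliptic regularity is needed.} The paper obtains the pointwise bound \eqref{eq: reg3} on $\tilde J$ directly from the smoothing of $J$ itself: the $L^1$--$L^\infty$ bound \eqref{eq:L1-Linf-bound} for $\cM_\lambda^N$ turns $\sup_{n,k}|\tilde J_t|$ into a bound on $\frac{1}{2\tau}\int_{t-\tau}^{t+\tau}\frac{1}{N}\sum_n|J_u(n;k,k+1)|\dd u$, and Cauchy--Schwarz against the weights $d_{k,k+1}/\theta_{k,k+1}$ gives $\frac{1}{N}\sum_{n,k}|J_u|\lesssim\sqrt{\cA_N(m_u,J_u)}$; a second Cauchy--Schwarz in time then yields $c_4=\frac{\|\eta\|_\infty}{\lambda}\sqrt{AB/\tau}$. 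Your proposed route---replace $\tilde J$ by the $\cA_N(\tilde m,\cdot)$-minimiser and invoke discrete elliptic regularity---does give an $N$-independent bound (the partial sums of $\partial_t\tilde m$ over at most $NK$ cells are $O(Kc_2)$), but it is an unnecessary detour. Likewise \eqref{eq: reg1} follows immediately from $\partial_t\tilde m_t=\frac{1-\delta}{2\tau}\cM_\lambda^N[m_{t+\tau}-m_{t-\tau}]$ and \eqref{eq:L1-Linf-bound}, without any $L^2_t\to L^\infty_t$ upgrade, and then \eqref{eq: reg4} is literally the continuity equation with $c_5=c_2$.

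In short: your ``bookkeeping'' obstacle evaporates once you apply all three operations at once to both $m$ and $J$; the constants $c_0,\dots,c_5$ then come out as explicit monomials in $\delta,\lambda,\tau$ and $A$, and the $\bW$-displacement is handled exactly as you outlined via Lemma~\ref{lem:commutator} together with Proposition~\ref{prop:rough-lower-bound} for the time-averaging step.
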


\begin{proof}
Let $\cU_N := P_N \Leb^1|_{\cS^1} \in \cP(\cT_N)$ denote the probability measure that assigns mass $\frac{\pi_k}{N}$ to $A_{n;k}$. Fix a mollifier $\eta$ as above.
For $\lambda, \tau, \delta > 0$ we define a space-time regularisation by
\begin{subequations}
\begin{align}
 \label{eq: regularized measure}
\tilde m_t(n;k) & := \frac{1}{2 \tau} \int_{t-\tau}^{t+\tau}
		\cM_\lambda^N \Big[ (1-\delta) m_u  + \delta \cU_N \Big] (n;k)
		 \dd u \ , \\
\label{eq: regularized vector field}
\tilde J_t(n;k,k+1)
	 & := \frac{1-\delta}{2 \tau} \int_{t-\tau}^{t+\tau}
	 	\cM_\lambda^N J_u (n;k,k+1) \dd u \ .
\end{align}
\end{subequations}
In both expressions, the operator $\cM_\lambda^N$ is understood to act on the variable $n$, i.e., the spatial averaging takes place over cells whose distance is an integer multiple of the period $K$. Moreover, we use the convention that $m_u = m_0$ and $J_u = 0$ for $u < 0$, and $m_u = m_1$ and $J_u = 0$ for $u > 1$. We claim that this approximation satisfies all the sought properties.

An explicit computation shows that $(\tilde m_t, \tilde V_t)_t$ solves the discrete continuity equation.

To prove \eqref{eq:  reg5}, we note that by a trifold application of the joint convexity of $\cA_N$,
\begin{align*}
 \cA_N(\tilde m_t, \tilde J_t)
	& \leq
	  \frac{1}{2 \tau} \int_{t-\tau}^{t+\tau}
		\cA_N\Big( \cM_\lambda^N \big[ (1-\delta) m_u  + \delta \cU_N \big],
			(1-\delta) \cM_\lambda^N J_u \Big) \dd u
\\	& \leq
	  \frac{1}{2 \tau} \int_{t-\tau}^{t+\tau}
		\cA_N\Big( \big[ (1-\delta) m_u  + \delta \cU_N \big],
			(1-\delta) J_u \Big) \dd u
\\	& \leq
	  \frac{1-\delta}{2 \tau} \int_{t-\tau}^{t+\tau}
		\cA_N( m_u, J_u ) \dd u \ .
\end{align*}
Here we used the crucial regularisation bound from Lemma \ref{lem:contr}. 
 The desired inequality \eqref{eq:  reg5} follows.

Moreover, since $\cM_\lambda^N$ preserves positivity, we deduce the lower bound in \eqref{eq: reg0} with $c_0^{-1} = \delta \min_{k} \pi_k$.

To prove the upper bound in \eqref{eq: reg0}, we use the fact that $m_t$ is a probability measure and the $L^1$-$L^{\infty}$ bound \eqref{eq:L1-Linf-bound} to obtain
\begin{align*}
	N \tilde m_t(n;k) \leq \frac{\| \eta \|_{\infty}}{\lambda}=: c_1\ .
\end{align*}

To prove \eqref{eq: reg1}, we observe that
\begin{align*}
	\partial_t \tilde m_t = \frac{1-\delta}{2\tau} \cM_\lambda^N \big[ m_{t + \tau} - m_{t - \tau} \big] \ .
\end{align*}
Therefore, by another application of the $L^1$-$L^\infty$-bound in \eqref{eq:L1-Linf-bound}, we arrive at
\begin{align*}
	N |\partial_t \tilde m_t(n;k)|
		\leq \frac{\| \eta \|_\infty }{\tau \lambda}
			=: c_2 \ ,
\end{align*}
which proves \eqref{eq: reg1}.

The inequality \eqref{eq: reg4}, with $c_5 = c_2$, follows immediately from \eqref{eq: reg1} and the fact that $(\tilde m_t^{N}, \tilde J_t)_t$ solves the continuity equation.

Furthermore, since
\begin{align*}
	| \tilde m_t(n;k) - \tilde m_t(n+1;k) |
	\leq \sup_{s} | \cM_\lambda^N m_s(n;k) - \cM_\lambda^N m_s(n+1;k) |
	\leq \frac{\| \eta'\|_\infty}{\lambda^2 N^2} \ ,
\end{align*}
we obtain \eqref{eq: reg2} with $c_3 = \frac{\| \eta'\|_\infty}{\lambda^2}$, in view of the Lipschitz bound in \eqref{eq:L1-Lip-bound}.

Finally, to obtain the $L^\infty$-bound on the vector field \eqref{eq: reg3}, we use  \eqref{eq:L1-Linf-bound} again to infer
\begin{align*}
	\sup_{n,k} | \tilde J_t(n;k,k+1) |
		& \leq  \frac{1-\delta}{2\tau}	\int_{t - \tau}^{t + \tau}
				\sup_{n,k} |\cM_\lambda^N J_u(n;k,k+1)|	 \dd u
		\\& \leq \frac{\|\eta\|_\infty}{\lambda N} \frac{1-\delta}{2\tau} \int_{t - \tau}^{t + \tau}
				\sup_{k}
					\sum_{n=0}^{N-1} | J_u(n;k,k+1) | \dd u \ .
\end{align*}
Writing 
$\theta_{n;k,k+1} = \theta_{k,k+1}\Big( N \frac{m_u(n;k)}{\pi_k},
			  N \frac{m_u(n;k+1)}{\pi_{k+1}}\Big)$ 
for brevity, we infer that
\begin{align*}
\frac{1}{N} \bigg( \sum_{n,k} | J_u(n;k,k+1) | \bigg)^2
	 & \leq \bigg(\sum_{n,k} 
	 		\frac{d_{k,k+1}}{N}
	 		\frac{J_u^2(n;k,k+1)}{\theta_{n;k,k+1}}
					  \bigg)
			  \bigg( 
			  \sum_{n,k} 
			  \frac{\theta_{n;k,k+1}}{ d_{k,k+1} }
			  \bigg)
	\\ & = \cA_N(m_u, J_u)	 
			  \sum_{n,k} 
			  \frac{\theta_{n;k,k+1}}{ d_{k,k+1} }
			  		   \ .
\end{align*}
Using the bound $\theta_{k,k+1}(a,b) \leq a + b$ we obtain
\begin{align*}
	\sum_{n,k} \frac{\theta_{n;k,k+1}}{ d_{k,k+1} }
	\leq \frac{N}{\min_k d_{k,k+1}} 
		\sum_{n,k}\bigg(\frac{ m_u(n;k)}{\pi_k} + \frac{m_u(n;k+1)}{\pi_{k+1}}\bigg)
	\leq 2 B N \ ,
\end{align*}
where $B = (\max_{k} \pi_k^{-1}) (\max_k d_{k,k+1}^{-1})$.
Combining these bounds, we arrive at
\begin{align*}
	\sup_{n,k} | \tilde J_t(n;k,k+1) |
		& \leq 
		\frac{\|\eta\|_\infty\sqrt{2B}}{2\tau\lambda} 
			\int_{t - \tau}^{t + \tau}
				\sqrt{\cA_N(m_u, J_u)}
			\dd u 
		\\ & \leq 
		\frac{\|\eta\|_\infty}{\lambda}
			\sqrt{ \frac{B}{\tau}
				  \int_0^1 \cA_N(m_u, J_u) \dd u }  \ ,
\end{align*}
which yields \eqref{eq: reg3} with $c_4 := \frac{\|\eta\|_\infty}{\lambda}\sqrt{ \frac{AB}{\tau}}$.
As we will choose $\delta, \lambda, \tau > 0$ depending on $\eps$, the bounds \eqref{eq: reg0}--\eqref{eq: reg4} follow.

\medskip
It remains to show that $\bW(\iota_N \tilde m_t, \iota_N m_t) \leq \eps + \frac{C}{N}$ for suitable values of $\delta, \lambda$ and $\tau$. We consider the effect of the three different regularisations separately. 
First we apply the convexity of $\bW^2$ to obtain
for any $m \in \cP(\cT_N)$,
\begin{align}\label{eq:close-1}
	\bW^2\big(\iota_N m, \iota_N [(1-\delta) m + \delta \cU_N] \big)
	\leq \delta \bW^2(\iota_N m, \Leb^1|_{\cS^1})
	\leq \frac{\delta}{4} \ ,
\end{align}
since the diameter of $(\cP(\cS^1), \bW)$ is equal to $\frac12$.
Moreover, for $m \in \cP(\cT_N)$, Lemma \ref{lem:commutator} yields
\begin{equation}\begin{aligned}\label{eq:close-2}
	\bW( \iota_N m, \iota_N \cM_\lambda^N m)
		& \leq   \bW( \iota_N m, \bM_\lambda \iota_N m)
			   + \bW( \bM_\lambda \iota_N m, \iota_N \cM_\lambda^N m)
		\\& \leq C \bigg( \lambda + \frac{1}{N}  \bigg)  \ ,
\end{aligned}\end{equation}
where $C < \infty$ depends only on $\eta$.
Furthermore, set $\bar m_t = \cM_\lambda^N \big( (1-\delta) m_t + \delta \cU_N \big)$  and $\bar J_t =  (1-\delta) \cM_\lambda^N J_t$. It then follows that $c_0 \leq N \bar m_t \leq c_1$ and $\int_0^1 \cA_N(\bar m_t, \bar J_t) \dd t \leq A$.
Thus, for $s \leq t$, Proposition \ref{prop:rough-lower-bound} yields a constant $\kappa < \infty$ depending on $c_0$ and $c_1$ (hence on $\delta$ and $\lambda$) such that,
\begin{align*}
	\bW^2(\iota_N \bar m_s, \iota_N \bar m_t)
		& \leq \kappa \int_0^1
			 \cA(\bar m_{(1-a)s + at}, (t-s) \bar J_{(1-a)s + at}) \dd a
		\\& \leq \kappa (t-s) \int_s^t 
			\cA(\bar m_u, \bar J_u) \dd u
		\\& \leq \kappa A (t-s) \ .
\end{align*}
By convexity of $\bW^2$, we obtain
\begin{align}
\label{eq:close-3}
	\bW^2\bigg(\iota_N  \bar{m} _t, \iota_N \bigg[\frac{1}{2 \tau} \int_{t - \tau}^{t + \tau} \bar{m} _u \dd u\bigg] \bigg)
	\leq \frac{1}{2 \tau} \int_{t - \tau}^{t + \tau} \bW^2(\iota_N \bar{m} _t, \iota_N \bar{m} _u) \dd u
	\leq  \frac{\kappa \tau A}{2} \ .
\end{align}

Applying the estimates \eqref{eq:close-1} with $m=m_t$, \eqref{eq:close-2} with $m= (1-\delta)m_t + \delta \cU_N$ and \eqref{eq:close-3}, we arrive at
\begin{align*}
	\bW\big( \iota_N \tilde{m}_t, \iota_N m_t \big)
	\leq C \bigg( \sqrt{\delta} + \lambda + \frac1N 
			+  \sqrt{\kappa \tau A} \bigg) \ ,
\end{align*}
for some $C < \infty$ depending only on $\cT$ and on $\eta$.
Thus, choosing first $\lambda$ and $\delta$ sufficiently small, and then $\tau$ sufficiently small depending on $\delta$,  the result follows.
\end{proof}

We are now ready to prove the lower bound in Theorem \ref{thm:main}.

\begin{theorem}[Lower bound for $\cW_N$]	\label{thm:lowerbound_quantitative}
For any mesh $\cT$ and any family of admisible means $\{\theta_{k,k+1}\}_k$ we have 
\begin{align*}
	c^\star(\theta,\cT) 
		\bW^2(\mu_0, \mu_1)
		\leq
		\liminf_{N \to \infty} \cW_N^2( P_N \mu_0, P_N \mu_1) \ ,
\end{align*}
uniformly for all $\mu_0, \mu_1 \in \cP(\cS^1)$. 
More precisely,  
for any $\eps > 0$ there exists $\bar N \in \N$ such that for any
$N \geq \bar N$ and $\mu_0, \mu_1 \in \cP(\cS^1)$, we have
\begin{equation} \label{eq:lowerbound_quantitative}
	c^\star(\theta,\cT) 
		\bW^2( \mu_0, \mu_1) 
			\leq 
		\cW_N^2(P_N \mu_0, P_N \mu_1) + \eps \ .
\end{equation}
\end{theorem}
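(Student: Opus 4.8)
The plan is to establish \eqref{eq:lowerbound_quantitative} by taking an approximately optimal curve for $\cW_N(P_N\mu_0, P_N\mu_1)$, applying the regularisation of Proposition \ref{prop: regularization at the discrete level} to obtain a curve $(\tilde m_t, \tilde J_t)_t$ with the same boundary data (up to a small $\bW$-error controlled by $\iota_N$), comparable action, and the quantitative regularity bounds \eqref{eq: reg0}--\eqref{eq: reg4}; and then to show that the action of this regularised curve is bounded below by $c^\star(\theta,\cT)$ times the continuous Benamou--Brenier action of a suitable continuous competitor $(\mu_t, j_t)_t \in \bCE$, plus vanishing error. Combined with \eqref{eq:W2momenta} for the Benamou--Brenier formula, this yields the bound. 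One subtle point at the endpoints: $(\tilde m_0, \tilde m_1)$ need not equal $(P_N\mu_0, P_N\mu_1)$, so I would use item (1) of Proposition \ref{prop: regularization at the discrete level} together with the coarse upper bound (Proposition \ref{prop:rough-upper-bound}) or a direct triangle-inequality argument in $\bW$ to absorb the discrepancy into $\eps$, exploiting that $\bW(\iota_N P_N \mu, \mu) \to 0$ uniformly as $N\to\infty$ since the mesh size of $\cT_N$ is $[\cT]/N$.

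The core of the argument is the lower bound on the action of the regularised curve. Fix $t$ and write $\rho_t(n;k) = N\tilde m_t(n;k)/\pi_k$. For each "macroscopic" cell index $n$, I would define the continuous density value by averaging the mass over the period, $\bar\mu_t(n) := \sum_{k} \tilde m_t(n;k)$, and the continuous flux $j_t$ on the macroscopic interval by interpolating the discrete fluxes $\tilde J_t(n;k,k+1)$ as in the proof of Proposition \ref{prop:rough-lower-bound}. The regularity bounds are exactly what make the heuristic from the introduction rigorous: \eqref{eq: reg2} says the density is nearly constant across a period up to $O(1/N)$, so the profile $k \mapsto \rho_t(n;k)$ is, to leading order, of the form $\alpha_t(n;k)\,\bar\mu_t(n)\cdot(\text{const})$ for some probability vector; and \eqref{eq: reg4} says the flux $\tilde J_t(n;k,k+1)$ is nearly $k$-independent within a period, call its common value $\approx \tilde J_t(n)$. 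Then
\begin{align*}
	\sum_{k=0}^{K-1} \frac{d_{k,k+1}}{N}\, \frac{\tilde J_t^2(n;k,k+1)}{\theta_{k,k+1}(\rho_t(n;k),\rho_t(n;k+1))}
	\approx \frac{\tilde J_t^2(n)}{N}\sum_{k=0}^{K-1} \frac{d_{k,k+1}}{\theta_{k,k+1}(\rho_t(n;k),\rho_t(n;k+1))}
	\geq \frac{\tilde J_t^2(n)}{N}\cdot\frac{c^\star(\theta,\cT)}{\bar\mu_t(n)\, N}\, ,
\end{align*}
where the last step uses $1$-homogeneity of each $\theta_{k,k+1}$ together with the definition \eqref{eq:c-again} of $c^\star$ applied to the probability vector $(\tilde m_t(n;0),\dots,\tilde m_t(n;K-1))/\bar\mu_t(n)$ — this is precisely where the infimum defining $c^\star$ enters. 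Summing over $n$ and recognising the right-hand side as a Riemann sum for $c^\star(\theta,\cT)\int_{\cS^1}|j_t|^2/\mu_t$, and then integrating in $t$, gives $\int_0^1\cA_N(\tilde m_t,\tilde J_t)\,dt \geq c^\star(\theta,\cT)\int_0^1\bA(\mu_t,j_t)\,dt - o(1) \geq c^\star(\theta,\cT)\,\bW^2(\mu_0,\mu_1) - o(1)$.

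Several error terms must be controlled: the discrepancy between $\tilde J_t(n;k,k+1)$ and its period-average, which by \eqref{eq: reg4} is $O(1/N)$ and contributes at lower order relative to the main $O(1)$-per-period flux (here the lower bound $\rho_t \geq c_0^{-1}/\pi_k$ from \eqref{eq: reg0} and the upper bound $\tilde J_t \leq c_4$ from \eqref{eq: reg3} keep the integrand bounded so these corrections are genuinely negligible); the replacement of $\theta_{k,k+1}(\rho_t(n;k),\rho_t(n;k+1))$ by the homogeneous-rescaled version, where \eqref{eq: reg2} and local Lipschitzness of $\theta$ (Lemma \ref{lem:prop_admissible_means}\eqref{item:loclip}, valid since densities are bounded away from $0$ and $\infty$) give an $O(1/N)$ multiplicative error; and the Riemann-sum error in passing from the discrete spatial sum to the continuous integral, which is controlled by \eqref{eq: reg2} and \eqref{eq: reg4} since the summand varies by $O(1/N)$ between consecutive macroscopic cells. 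I expect the main obstacle to be organising these estimates uniformly in $\mu_0,\mu_1$ and making the continuous competitor $(\mu_t,j_t)_t$ a bona fide element of $\bCE(\mu_0,\mu_1)$ with finite action — in particular checking that the time-regularity from \eqref{eq: reg1} plus the $\bW$-closeness from item (1) let one pass to a (subsequential) limit curve connecting the correct endpoints, so that \eqref{eq:W2momenta} applies. A clean way to handle the uniformity and the limit simultaneously is to argue by contradiction: if \eqref{eq:lowerbound_quantitative} failed, there would be $\eps_0>0$ and sequences $N_j\to\infty$, $\mu_0^j,\mu_1^j$ with $\cW_{N_j}^2 + \eps_0 < c^\star\,\bW^2$; extracting weakly-$*$ convergent subsequences of the (regularised) curves and their fluxes, using lower semicontinuity of the continuous action and the lower bound computation above, produces the contradiction.
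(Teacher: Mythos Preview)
Your proposal is correct and follows essentially the same route as the paper: regularise via Proposition~\ref{prop: regularization at the discrete level}, use the per-period bounds \eqref{eq: reg2}--\eqref{eq: reg4} together with local Lipschitzness of the functions $f_{k,k+1}$ on the compact range guaranteed by \eqref{eq: reg0} and \eqref{eq: reg3} to reduce the discrete action in each macroscopic cell to $c^\star(\theta,\cT)\,|\hat J_t(n)|^2/(N\hat m_t(n))$ up to $O(1/N)$, build a continuous competitor with piecewise-constant density $\hat m_t$ and piecewise-affine momentum interpolating $\hat J_t$, and close with a $\bW$-triangle inequality at the endpoints. Your worry about needing a compactness/subsequence argument is unwarranted: the paper works directly with the $N$-dependent continuous curve $(\mu_t^N,j_t^N)$, which connects nearby endpoints $\mu_0^N,\mu_1^N$ (not $\mu_0,\mu_1$), applies \eqref{eq:W2momenta} to bound $\bW^2(\mu_0^N,\mu_1^N)$ from above by its action, and then absorbs the discrepancy $\bW(\mu_i,\mu_i^N)\leq \eps + C/N$ at the very end --- no limit passage is needed, and this is precisely what makes the estimate uniform in $\mu_0,\mu_1$.
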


\begin{proof}
Fix $\eps > 0$. Applying Proposition \ref{prop: regularization at the discrete level} to an approximate $\cW_N$-geodesic between $P_N \mu_0$ and $P_N \mu_1$, we infer that there exists a curve $(m_t, J_t)_t$ satisfying the bounds
	\begin{align}
			\label{eq:W2-iota-N-bound}
		\bW(\iota_N m_i, \iota_N P_N \mu_i) & \leq \eps + \frac{C}{N}  \quad \text{ for } i = 0,1 \ , \\ 
			\label{eq:action-cW-N-bound}
		\int_0^1 \cA_N(m_t, J_t) \dd t
		& \leq \cW_N^2(P_N \mu_0, P_N \mu_1) + \eps \ ,
	\end{align}
as well as the regularity properties \eqref{eq: reg0}--\eqref{eq: reg4}.

For brevity we write
\begin{align*}
\cA_N(m, J) = \frac{1}{N} \sum_{n=0}^{N-1} \cA_N^n(m, J) \ ,
\end{align*}
where
\begin{align*}
\cA_N^n(m,J)
	& = \sum_{k=0}^{K-1} d_{k,k+1}
		f_{k,k+1}\bigg(N \frac{m(n;k)}{\pi_k}, N \frac{m(n;k+1)}{\pi_k}, J(n;k,k+1) \bigg)  \ .
\end{align*}
We set
\begin{align*}
\hm_t(n) := \sum_{k=0}^{K-1} m_t(n;k) \ , \quad
 \hJ_t(n) := J_t(n;-1,0) \ ,
\end{align*}
and define $\alpha_t : \{0, \ldots, N-1\} \times \{0, \ldots, K \} \to \R$ by
\begin{align*}
	\alpha_t(n;k) = \frac{m_t(n;\sigma(k))}{\hm_t(n)} \quad \text{ for } k = 0, \ldots, K \ ,
\end{align*}
where $\sigma(k) = k$ for $k = 0, \ldots, K-1$, and $\sigma(K) = 0$.
Here it is important to note that $\alpha_t(n;K) \neq \alpha_t(n+1;0)$.
Observe that, for $k = 0, \ldots, K-1$,
\begin{align*}
	 \frac{1}{\theta_{k,k+1}\big(\tfrac{\alpha_t(n;k)}{\pi_k},\tfrac{\alpha_t(n;k+1)}{\pi_{k+1}}\big)}\frac{|\hJ_t(n)|^2}{N \hm_t(n)}
	 =
	f_{k,k+1}\bigg(N \frac{m_t(n;\sigma(k))}{\pi_k}, N \frac{m_t(n;\sigma(k+1))}{\pi_{k+1}},   \hJ_t(n)\bigg)
\ .
\end{align*}
Note that, for any $n$ and $k$, 
\begin{align*}
	\frac{c_0^{-1}}{\max_\ell \pi_\ell} 
		\leq N \frac{m_t(n;k)}{\pi_k}
		\leq 	\frac{c_1}{\min_\ell \pi_\ell}
		\tand
	|J_t(n;k,k+1)| \leq c_4 \ .
\end{align*}
Therefore, since the functions $f_{k,k+1}$ are Lipschitz on the set $[\frac{c_0^{-1}}{\max_\ell \pi_\ell}  , \frac{c_1}{\min_\ell \pi_\ell}]^2 \times [-c_4,c_4]$, it follows 
that, for $k = 0, \ldots, K-1$,
\begin{equation}\begin{aligned}\label{eq:Lip-bound}
	& \bigg| f_{k,k+1}\bigg(N \frac{m_t(n;k)}{\pi_k}, N \frac{m_t(n;k+1)}{\pi_{k+1}}, J_t(n;k,k+1) \bigg)
	 -
	 \frac{1}{\theta_{k,k+1}\big(\tfrac{\alpha_t(n;k)}{\pi_k},\tfrac{\alpha_t(n;k+1)}{\pi_{k+1}}\big)}\frac{|\hJ_t(n)|^2}{N \hm_t(n)} \bigg|
	\\&  \leq [f_{k,k+1}]_{\Lip}
	 \bigg( 
				\frac{N}{\pi_{k+1}} \big|  m_t(n;k+1) - m_t(n;\sigma(k+1)) \big|
				+ 			
			\big| J_t(n;k,k+1) - \hJ_t(n) \big|
				 \bigg)
	\\& \leq \frac{[f_{k,k+1}]_{\Lip}}{N} \bigg( 
			\frac{c_3}{\pi_{k+1}} 
			+  K c_5
			\bigg)
	 =: \frac{C}{N} \ ,
\end{aligned}\end{equation} 
for some $C < \infty$ depending on $\eps$ (through $c_0$, \ldots, $c_5$) and on $\cT$.

Since $\sum_{k=0}^{K-1} \alpha_t(n;k) = 1$, the sequence $\{ \alpha_t(n;k) \}_{k = 0}^{K-1}$ is, for any $n$, a competitor for the cell problem \eqref{eq: hom-constant for K-periodic case}. Taking into account that  $\alpha_t(n;0) = \alpha_t(n;K)$, it follows from \eqref{eq:Lip-bound} and the definition \eqref{eq: hom-constant for K-periodic case} of $c^\star(\theta, \cT)$ that
\begin{equation}\begin{aligned}\label{eq:A-n-bound}
\cA_N^n(m_t,J_t)
	& \geq \frac{|\hJ_t(n)|^2}{N \hm_t(n)} \sum_{k=0}^{K-1}
	\frac{ d_{k,k+1} }{\theta_{k,k+1}\big(\tfrac{\alpha_t(n;k)}{\pi_k},\tfrac{\alpha_t(n;k+1)}{\pi_{k+1}}\big)}
	- \frac{C}{N}
	\\ & \geq c^\star(\theta, \cT)
	\frac{|\hJ_t(n)|^2}{N \hm_t(n)}
	- \frac{C}{N} \ .
\end{aligned}\end{equation}

\medskip

At the continuous level, we define a curve of measures $(\mu_t^N)_t$ with piecewise constant densities, and a vector field $j_t^N$ by piecewise affine interpolation of $\hJ_t^N$; more precisely,
\begin{align*}
	\mu_t^N  & = \sum_{n=0}^{N-1} \hm_t \cU_{\hA_n} \ , \\
	j_t^N(x) & = \sum_{n=0}^{N-1} \chi_{\hA_n}(x) \Big[ (n + 1 - Nx) \hJ_t(n) + (Nx - n) \hJ_t(n+1)\Big] \ .
\end{align*}
As before, $\cU_{\hA_n}$ denotes the normalised Lebesgue measure on $\hA_n := [\frac{n}{N},\frac{n+1}{N})$.

We observe that the density $\rho_t^N$ of $\mu_t^N$ satisfies
\begin{align*}
\partial _t \rho_t^N (x)
		= N \sum_{k=0}^{K-1} \partial_t \hm_t(n;k)
		= N \big( \hJ_t(n) - \hJ_t(n+1) \big)
		= - \partial_x j_t^N(x)
\end{align*}
for any $x \in \big( \tfrac{n}{N}, \tfrac{n+1}{N} \big)$, which implies that $\big(\mu_t^N, j_t^N\big)_t$ solves the continuity equation.

To estimate the continuous energy, we find 
\begin{align*}
	\bA(\mu_t^N, j_t^N)
	& = \frac1N \sum_{n=0}^{N-1}	\frac{1}{\hm_t(n)}
			\int_{\frac{n}{N}}^{\frac{n+1}{N}}
				\Big[ (n + 1 - Nx) \hJ_t(n) + (Nx - n) \hJ_t(n+1)\Big]^2 \dd x
	\\& \leq
	\frac1{N} \sum_{n=0}^{N-1} \frac{\hJ_t(n)^2 + \hJ_t(n+1)^2}{2 N \hm_t(n)}
	\\& =
	\frac1{N} \sum_{n=0}^{N-1}
		\frac{\hJ_t(n)^2}{\theta_{\rm h}( N \hm_t(n), N \hm_t(n+1)) }
\end{align*}
where $\theta_{\rm h}(a,b) = \frac{2ab}{a+b}$ denotes the harmonic mean.
Note that 
\eqref{eq: reg2} implies the Lipschitz bound
\begin{align*}
	| N \hm_t(n) - N \hm_t(n-1)  | \leq \frac{K c_3}{N} \ .
\end{align*}
and \eqref{eq: reg0} yields a lower bound on the density: $ N \hm_t(n) \geq K c_0^{-1}$.
Furthermore, \eqref{eq: reg3} yields the estimate $|\hJ_t(n)| \leq c_4$.
Thus, in view of the identity
$
	\frac{1}{\theta_{\rm h}(a,b)}
	= \frac{1}{a} + \frac{a-b}{2ab}
$
we obtain
\begin{align} \label{eq:A-N-bound-cont}
	\bA(\mu_t^N, j_t^N)
	\leq \bigg(\frac1{N} \sum_{n=0}^{N-1}
		\frac{\hJ_t(n)^2}{N \hm_t(n)} \bigg)
		+ \frac{C}{N} \ ,
\end{align}
with $C < \infty$ depending on $\eps$ (through the $c_i$'s) and on $\cT$.

Putting things together, it follows from \eqref{eq:A-n-bound}, \eqref{eq:A-N-bound-cont} and
\eqref{eq:action-cW-N-bound} that
\begin{equation}\begin{aligned}\label{eq:bW-bound1}
		 c^\star(\theta, \cT) \bW^2(\mu_0^N, \mu_1^N)
	& \leq
		 c^\star(\theta, \cT) \int_0^1 \bA(\mu_t^N, j_t^N) \dd t
	\\& \leq  c^\star(\theta, \cT)
		\int_0^1 \frac1{N} \sum_{n=0}^{N-1}
		\frac{\hJ_t(n)^2}{N \hm_t(n)}
		 \dd t
				+ \frac{C}{N}
	\\& \leq
		\int_0^1 \frac1{N} \sum_{n=0}^{N-1}
		\cA_N^n(m_t,J_t)
		 \dd t
				+ \frac{C}{N}
	\\& = \int_0^1
		\cA_N(m_t,J_t)
		 \dd t
				+ \frac{C}{N}
	\\& \leq \cW_N^2(P_N \mu_0, P_N \mu_1) + \eps
				+ \frac{C}{N} \ .
\end{aligned}\end{equation}
Finally we note that, for $i = 0, 1$, \eqref{eq:W2-iota-N-bound} yields
\begin{align*}
	\bW(\mu_i,  \mu_i^N)
		& \leq 	\bW(\mu_i, \iota_N P_N \mu_i)
			+	\bW(\iota_N P_N \mu_i, \iota_N m_i)
			+   \bW(\iota_N m_i, \mu_i^N)
		\\& \leq \frac{1}{N} + \Big( \eps + \frac{C}{N} \Big) + \frac{1}{N}
		\\& \leq \eps + \frac{C}{N} \ ,
\end{align*}
which implies that
\begin{align}\label{eq:bW-bound2}
	\bW(\mu_0, \mu_1) \leq \bW(\mu_0^N, \mu_1^N)
		+ 2\Big(\eps + \frac{C}{N}\Big)  \ .
\end{align}
Combining \eqref{eq:bW-bound1} and \eqref{eq:bW-bound2} we obtain 
the desired result.
\end{proof}

\section{Proof of the upper bound}
\label{sec:upper}

In this section we present the proof of the upper bound for $\cW_N$.
The  idea of the proof of the upper bound is to start from optimal curves of measures at the continuous level, and to introduce the optimal oscillation in their discretations, as determined by the formula for the effective mobility \eqref{eq: hom-constant for K-periodic case}. 

Let $\alpha^\star = \{\alpha_k^\star\}_{k=0}^{K-1}$ be an optimiser in \eqref{eq: hom-constant for K-periodic case}, and define $P_N^\star : \cP(\cS^1) \to \cP(\cT_N)$ by
\begin{align}\label{eq:oscillating-projection}
	\big(P_N^\star \mu\big)(n;k)
	:= \alpha_k^\star \mu\big( \hA_n \big) \ , \qquad
\text{where \ }
	\hA_n := \Big[\frac{n}{N},\frac{n+1}{N}\Big) \ ,
\end{align}
as before. Slightly abusing notation, we also define $P_N^\star : C(\cS^1; \R) \to \cV(\cT_N)$ by
\begin{align*}
	\big(P_N^\star j\big)(n;k,k+1)
	:= \bigg(\sum_{\ell={k+1}}^{K-1} \alpha_\ell^\star\bigg) j\big(\tfrac{n}{N}\big)
	 + \bigg(\sum_{\ell=0}^{k} \alpha_\ell^\star\bigg) j\big(\tfrac{n+1}{N}\big)
		\ .
\end{align*}
Since $\sum_{k=0}^{K-1} \alpha_k^\star = 1$, the right-hand side is a convex combination of $j\big(\tfrac{n}{N}\big)$ and $j\big(\tfrac{n+1}{N}\big)$.

\begin{proposition}[Discretisation of the continuity equation]\label{prop:CE-cont-to-disc}
Let $(\mu_t)_{t \in [0,1]}$ be a Borel family of probability measures, and let $(j_t)_{t \in [0,1]}$ be a Borel family of continuous functions satisfying the continuity equation $\partial_t \mu + \dive j = 0$ on $\cS^1$.
Then the pair $(m_t, J_t)_{t \in [0,1]}$ defined by
\begin{align*}
	m_t := P_N^\star \mu_t \ , \quad
	J_t := P_N^\star j_t \ ,
\end{align*}
solves the continuity equation on $\cT_N$.
\end{proposition}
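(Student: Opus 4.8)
The plan is to verify the discrete continuity equation of Definition~\ref{def:CE} cell by cell, reducing it to the continuous continuity equation $\partial_t\mu+\dive j=0$ integrated over the sub-interval $\hA_n := [\tfrac nN,\tfrac{n+1}{N})$. The only two ingredients are a telescoping computation for the discrete divergence and a routine mollification argument to make the "integration over $\hA_n$" rigorous for general (not necessarily smooth) $(\mu_t,j_t)$.

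First I would compute the discrete divergence of $J_t = P_N^\star j_t$. Setting $S_k := \sum_{\ell=0}^{k}\alpha^\star_\ell$, with $S_{-1}=0$ and $S_{K-1}=\sum_{\ell=0}^{K-1}\alpha^\star_\ell=1$, the definition of $P_N^\star$ on continuous functions reads $J_t(n;k,k+1) = (1-S_k)\,j_t(\tfrac nN) + S_k\,j_t(\tfrac{n+1}{N})$ for $k=0,\dots,K-1$; the very same formula evaluated at $k=-1$ gives $J_t(n;-1,0)=j_t(\tfrac nN)$, which is consistent with the cyclic convention $J_t(n;K-1,K)=J_t(n+1;-1,0)$. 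Subtracting, the partial sums telescope and one obtains, for every $n$ and every $k=0,\dots,K-1$,
\[
  J_t(n;k,k+1)-J_t(n;k-1,k)
  = \alpha^\star_k\Big( j_t\big(\tfrac{n+1}{N}\big)-j_t\big(\tfrac nN\big)\Big).
\]

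Since $m_t(n;k)=\alpha^\star_k\,\mu_t(\hA_n)$ by definition of $P_N^\star$, the previous display shows that the discrete continuity equation $\ddt m_t(n;k)+J_t(n;k,k+1)-J_t(n;k-1,k)=0$ holds for all $n$ and all $k=0,\dots,K-1$ if and only if, for each $n$,
\[
  \ddt \mu_t(\hA_n) = j_t\big(\tfrac nN\big) - j_t\big(\tfrac{n+1}{N}\big)
\]
in the sense of distributions on $(0,1)$ (the equation for an index $k$ with $\alpha^\star_k=0$ is trivially true). To establish this scalar identity I would fix $\phi\in C_c^1((0,1))$ and test $\partial_t\mu+\dive j=0$ against $\xi(t,x)=\phi(t)\psi_\delta(x)$, where $\psi_\delta\in C^\infty(\cS^1)$ approximates $\one_{\hA_n}$: it equals $1$ on $[\tfrac nN+\delta,\tfrac{n+1}{N}-\delta]$, is supported in a $\delta$-neighbourhood of $\hA_n$, takes values in $[0,1]$, and satisfies $\psi_\delta'\weakstar\delta_{n/N}-\delta_{(n+1)/N}$ as $\delta\downarrow0$. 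Passing to the limit $\delta\downarrow0$: the flux term converges to $\int_0^1\phi(t)\big(j_t(\tfrac nN)-j_t(\tfrac{n+1}{N})\big)\dd t$ because each $j_t$ is continuous, while the mass term converges to $-\int_0^1\phi'(t)\,\mu_t(\hA_n)\dd t$ by dominated convergence; this is precisely the weak form of the identity. (For absolutely continuous curves $\mu_t=\rho_t\,\dd x$, as used in Section~\ref{sec:upper}, this step is simply $\ddt\int_{n/N}^{(n+1)/N}\rho_t\,\dd x = -\int_{n/N}^{(n+1)/N}\partial_x j_t\,\dd x = j_t(\tfrac nN)-j_t(\tfrac{n+1}{N})$ by the fundamental theorem of calculus, and no mollification is needed.)

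The only point requiring genuine care is the limit $\delta\downarrow0$ in the last step: one must arrange that $\psi_\delta'$ concentrates the correct \emph{signed} masses $+1$ and $-1$ at the two endpoints of $\hA_n$ and then invoke the continuity of $j_t$ to identify the limit of the flux term; in the mass term one uses that $\partial\hA_n$ is $\mu_t$-null, which is automatic for the curves to which the proposition is applied. The bookkeeping of the cyclic conventions at $k=0$ and $k=K-1$ in the telescoping step likewise deserves a line of verification. Everything else is the elementary computation indicated above.
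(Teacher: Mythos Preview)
Your proof is correct and follows essentially the same approach as the paper: both reduce the discrete continuity equation to the scalar identity $\ddt\mu_t(\hA_n)=j_t(\tfrac nN)-j_t(\tfrac{n+1}{N})$ via the telescoping computation for the discrete divergence, and then establish this identity by testing the continuous continuity equation against a mollified indicator of $\hA_n$ and passing to the limit. Your treatment is in fact slightly more careful than the paper's in flagging the need for $\mu_t(\partial\hA_n)=0$ in the limit of the mass term, a point the paper leaves implicit.
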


\begin{proof}
As $(\mu_t, j_t)_t$ satisfies the continuity equation, we have
\begin{align*}
	  \int_0^1 \bigg( \int_{\cS^1} \partial_t \phi_t(x) \dd \mu_t(x) 
	+ \int_{\cS^1} \partial_x \phi_t(x) j_t(x) \dd x \bigg) \dd t 
	= \int_{\cS^1} \phi_1(x) \dd \mu_1(x) - \int_{\cS^1} \phi_0(x) \dd \mu_0(x)  
\end{align*}
for any smooth function $\phi : [0,1] \times \cS^1 \to \R$.

Let $\psi : [0,1] \to \R$ be smooth, and define $\eta^\eps : \cS^1 \to \R$ by $\eta^\eps = \chi_{\hA_n} * \xi^\eps$ for a smooth mollifier $\xi^\eps$ supported in an $\eps$-neighbourhood of $0$. Set $\phi^\eps_t(x) = \psi(t) \eta^\eps(x)$. Applying the weak formulation of the continuity equation to $\phi^\eps$, and passing to the limit $\eps \downarrow 0$, we obtain
\begin{align*}
	\int_0^1 \psi'(t) \mu_t(\hA_n) \dd t
	+ \int_0^1 \psi(t) 
		\Big(
		 j_t\big(\tfrac{n+1}{N}\big) - j_t\big(\tfrac{n}{N}\big)
		\Big)
	   \dd t 
	= \psi(1) \mu_1(\hA_n) -  \psi(0) \mu_0(\hA_n) \ .
\end{align*}
Multiplying this identity by $\alpha_k^\star$, and using the fact that 
\begin{align*}
	\alpha_k^\star \big( 
		j_t \big(\tfrac{n+1}{N}\big) - 
		j_t \big(\tfrac{n}{N}\big)
		\big)
		 = 
		 J_t(n;k,k+1) - J_t(n;k-1,k) \ ,
\end{align*}
we obtain
\begin{align*}
	& \int_0^1 \psi'(t) m_t(n;k) \dd t
	+ \int_0^1 \psi(t) 
		\Big(
		  J_t(n;k,k+1) - J_t(n;k-1,k)
		\Big)
	   \dd t 
	\\& \qquad  = \psi(1) m_1(n;k) -  \psi(0) m_0(n;k) \ ,
\end{align*}
which is the distributional form of the discrete continuity equation \eqref{eq:cont-eq}.
\end{proof}

\begin{lemma}[Consistency] \label{lem:almost-identity}
For all $\mu \in \cP(\cS^1)$ we have
\begin{align*}
	\bW(\mu, \iota_N P_N^\star \mu) \leq \frac{1}{N} \ .
\end{align*}
\end{lemma}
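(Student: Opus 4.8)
The plan is to exhibit an explicit transport plan between $\mu$ and $\iota_N P_N^\star \mu$ that moves mass only within each interval $\hA_n = [\tfrac{n}{N},\tfrac{n+1}{N})$, and then to bound its cost by the squared diameter of these intervals.

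First I would record the structural observation that each cell $A_{n;k}$ of the periodic mesh $\cT_N$ is contained in $\hA_n$, so that the uniform partition $\{\hA_n\}_{n=0}^{N-1}$ is coarser than $\cT_N$. Since $\iota_N P_N^\star \mu = \sum_{n,k} \alpha_k^\star\, \mu(\hA_n)\, \cU_{A_{n;k}}$ and each $\cU_{A_{n;k}}$ is supported in $\hA_n$, the mass that $\iota_N P_N^\star \mu$ assigns to $\hA_n$ equals $\sum_{k=0}^{K-1} \alpha_k^\star\, \mu(\hA_n) = \mu(\hA_n)$, where we used that $\alpha^\star$ is a probability vector. Hence $\mu$ and $\iota_N P_N^\star \mu$ assign the same mass to every $\hA_n$.

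Next I would build a coupling interval by interval. For each $n$ with $\mu(\hA_n) > 0$ set $\gamma_n := \tfrac{1}{\mu(\hA_n)}\, \mu|_{\hA_n} \otimes (\iota_N P_N^\star \mu)|_{\hA_n}$, and $\gamma_n := 0$ otherwise; then put $\gamma := \sum_{n=0}^{N-1} \gamma_n$. By the mass-matching from the previous step, the two marginals of $\gamma$ are exactly $\mu$ and $\iota_N P_N^\star \mu$, so $\gamma \in \Gamma(\mu, \iota_N P_N^\star \mu)$. As $\gamma$ is supported on $\bigcup_n (\hA_n \times \hA_n)$ and $\sd(x,y) \leq \tfrac1N$ whenever $x,y \in \hA_n$, the Kantorovich formula \eqref{eq:Wp} yields
\[
	\bW^2(\mu, \iota_N P_N^\star \mu) \leq \int_{\cS^1 \times \cS^1} \sd^2(x,y) \dd\gamma(x,y) \leq \frac{1}{N^2} \sum_{n=0}^{N-1} \mu(\hA_n) = \frac{1}{N^2},
\]
which is the assertion.

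There is no real obstacle in this argument; the only minor points requiring care are the convention for $\gamma_n$ in the degenerate case $\mu(\hA_n)=0$ (both conditional measures then vanish, so nothing happens) and the elementary fact that the periodic mesh $\cT_N$ refines the uniform mesh $\{\hA_n\}$. The essential input is simply the normalisation $\sum_{k=0}^{K-1}\alpha_k^\star = 1$, which forces the oscillating projection to preserve the mass on each $\hA_n$ and thereby confines all transport to intervals of length $\tfrac1N$.
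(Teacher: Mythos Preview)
Your proof is correct and is precisely the natural argument the paper alludes to: the paper's own proof merely says ``This readily follows from the definitions'' and points to a reference for a similar result, so you have supplied exactly the details that were left implicit. The key observation---that $\cT_N$ refines $\{\hA_n\}$ and $\sum_k \alpha_k^\star = 1$ forces equal mass on each $\hA_n$, so a blockwise product coupling has cost at most $1/N^2$---is the only reasonable route here.
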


\begin{proof}
This readily follows from the definitions; see \cite[Lemma 3.2]{Gladbach-Kopfer-Maas:2018} for a similar result.
\end{proof}

The following proposition is the key result of this section. It proves the required upper bound for the discrete energy under suitable regularity conditions.

For $\delta > 0$, it will be useful to write
\begin{align*}
\cP_\delta(\cS^1) := \bigg\{ \mu = \rho \dd x \in \cP(\cS^1)
\suchthat
 	\rho \geq \delta > 0 , \quad
	\Lip(\rho) \leq \frac{1}{\delta} \bigg\} \ .
\end{align*}

\begin{proposition}[Discrete energy upper bound]\label{prop:action-upper-bound}
Let $\delta > 0$. There exists $C < \infty$ and $\bar N \in \N$ (depending on $\delta$), such that for any $N \geq \bar N$, all $\mu \in \cP_\delta(\cS^1)$, and all vector fields $j : \cS^1 \to \R$ with $\|j\|_{L^\infty} + \Lip(j) \leq \delta^{-1}$, we have
\begin{align*}
	\cA_N(P_N^\star \mu, P_N^\star j)
		\leq c^\star(\theta, \cT) \bA(\mu,j) + \frac{C}{N} \ .
\end{align*}
\end{proposition}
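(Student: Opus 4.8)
The plan is to compute the discrete action $\cA_N(P_N^\star \mu, P_N^\star j)$ term by term, fix a cell index $n$, and compare the sum over $k = 0, \ldots, K-1$ inside that cell with $c^\star(\theta,\cT)$ times the local contribution $\int_{\hat A_n} |j|^2/\rho$ to the continuous action. First I would unwind the definitions: since $m(n;k) = (P_N^\star\mu)(n;k) = \alpha_k^\star \mu(\hat A_n)$, the density ratio entering $\theta_{k,k+1}$ is
\begin{align*}
	N\frac{m(n;k)}{\pi_k} = \frac{\alpha_k^\star}{\pi_k}\, N\mu(\hat A_n),
\end{align*}
so by $1$-homogeneity $\theta_{k,k+1}\big(N m(n;k)/\pi_k, N m(n;k+1)/\pi_{k+1}\big) = N\mu(\hat A_n)\,\theta_{k,k+1}\big(\alpha_k^\star/\pi_k, \alpha_{k+1}^\star/\pi_{k+1}\big)$. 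The crucial point is that the $\alpha^\star$-dependent factor is exactly the one appearing in the infimum defining $c^\star(\theta,\cT)$, so $\sum_k d_{k,k+1}/\theta_{k,k+1}(\cdots) = c^\star(\theta,\cT)$ by optimality of $\alpha^\star$. If the vector field $J(n;k,k+1)$ were constant in $k$ and equal to $j$ evaluated at the cell, the per-cell energy would collapse to $c^\star(\theta,\cT)\cdot J^2/(N\mu(\hat A_n))$, which is $c^\star(\theta,\cT)$ times the piecewise-constant approximation of $\int_{\hat A_n}|j|^2/\rho$.

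The work, then, is in controlling the errors coming from (a) $J(n;k,k+1) = (P_N^\star j)(n;k,k+1)$ depending on $k$ through the convex combination of $j(n/N)$ and $j((n+1)/N)$, and (b) replacing $\mu(\hat A_n)$ and $j$ at cell endpoints by their pointwise values, using the Lipschitz bounds $\mathrm{Lip}(\rho)\le 1/\delta$, $\mathrm{Lip}(j)\le 1/\delta$, $\|j\|_\infty\le 1/\delta$, and the lower bound $\rho\ge\delta$. Since both $j(n/N)$ and $j((n+1)/N)$ differ from any fixed value of $j$ on $\hat A_n$ by $O(1/(\delta N))$, and since the relevant means $\theta_{k,k+1}$ are bounded below on the compact set determined by $\delta$ (using Lemma~\ref{lem:prop_admissible_means}\eqref{item:bounds}, the ratios $\alpha_k^\star/\pi_k$ are bounded away from $0$ and $\infty$ — here one needs $\alpha_k^\star > 0$, which may require a short separate argument or a perturbation of $\alpha^\star$), the function $(\rho, J)\mapsto J^2/(N\mu(\hat A_n)\,\theta_{k,k+1})$ is Lipschitz in the relevant region and each replacement costs $O(1/N)$ per cell, hence $O(1/N)$ after the $\frac1N\sum_n$. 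Summing the per-cell estimates and comparing $\frac1N\sum_n J(n;0,1)^2/(N\mu(\hat A_n))$ with $\int_{\cS^1}|j|^2/\rho$ — again a Riemann-sum error controlled by the Lipschitz bounds — yields the claimed inequality with a constant $C$ depending only on $\delta$, $\cT$, and $\alpha^\star$ (hence only on $\delta$ and $\cT$).

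The main obstacle I anticipate is handling the possibility that some $\alpha_k^\star = 0$: in that case the density ratio $N m(n;k)/\pi_k$ vanishes, the argument of $\theta_{k,k+1}$ hits the boundary of $\R_+^2$, and the Lipschitz estimates for $F(\rho,J) = J^2/\rho$ degenerate (indeed $F$ blows up if $\rho\to 0$ with $J\ne 0$). To avoid this I would, before running the estimates, replace $\alpha^\star$ by $\alpha^\star_\epsilon := (1-\epsilon)\alpha^\star + \epsilon\,\pi$ (a genuine probability vector with all entries positive), note that $\sum_k d_{k,k+1}/\theta_{k,k+1}(\alpha^\star_\epsilon/\pi)\to c^\star(\theta,\cT)$ as $\epsilon\to 0$ by continuity of $\theta$ on $(0,\infty)^2$, absorb the resulting $o(1)$ error into the constant (choosing $\epsilon$ fixed but small, or letting $\epsilon\to 0$ slowly with $N$), and then run the Lipschitz argument on the compact set $[\delta\min_k(\alpha^\star_{\epsilon,k}/\pi_k), \delta^{-1}\max_k(\alpha^\star_{\epsilon,k}/\pi_k)]^2\times[-\delta^{-1},\delta^{-1}]$ on which $F\circ(\theta_{k,k+1}\times\mathrm{id})$ is genuinely Lipschitz. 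A secondary technical point is that the per-cell continuous energy is $\int_{\hat A_n}|j(x)|^2/\rho(x)\dd x$ with $x$-dependent integrand, whereas the discrete side produces $|j|^2/\rho$ frozen at cell values; matching these requires one more Riemann-sum/Lipschitz estimate, but this is routine given $\mathrm{Lip}(\rho),\mathrm{Lip}(j)\le\delta^{-1}$ and $\rho\ge\delta$.
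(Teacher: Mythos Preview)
There is a genuine gap in your factoring step. You write that by $1$-homogeneity
\[
\theta_{k,k+1}\Big(\tfrac{Nm(n;k)}{\pi_k}, \tfrac{Nm(n;k+1)}{\pi_{k+1}}\Big) = N\mu(\hat A_n)\,\theta_{k,k+1}\Big(\tfrac{\alpha_k^\star}{\pi_k}, \tfrac{\alpha_{k+1}^\star}{\pi_{k+1}}\Big),
\]
but this fails for $k = K-1$. By the periodic convention one has $m(n;K) = m(n+1;0) = \alpha_0^\star\,\mu(\hat A_{n+1})$, not $\alpha_0^\star\,\mu(\hat A_n)$. Hence the second argument of $\theta_{K-1,K}$ carries the ratio $\mu(\hat A_{n+1})/\mu(\hat A_n)$, and the sum $\sum_k d_{k,k+1}/\theta_{k,k+1}(\cdots)$ does \emph{not} collapse exactly to $c^\star(\theta,\cT)$. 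This cross-block boundary correction is precisely what the paper isolates and estimates: writing $\alpha(n;K) := \alpha_0^\star\,\mu(\hat A_{n+1})/\mu(\hat A_n)$, it shows
\[
\Bigg|c^\star(\theta,\cT) - \sum_{k=0}^{K-1}\frac{d_{k,k+1}}{\theta_{k,k+1}\big(\tfrac{\alpha(n;k)}{\pi_k},\tfrac{\alpha(n;k+1)}{\pi_{k+1}}\big)}\Bigg| \le \frac{C}{N}\cdot\frac{\Lip(\rho)}{\inf\rho}.
\]
The argument distinguishes two cases: if $\alpha_0^\star = 0$ the error vanishes identically (since then $\alpha(n;K) = 0$ as well); if $\alpha_0^\star > 0$, one uses concavity of $\theta_{K-1,K}(a,\cdot)$ to get a Lipschitz bound on $[\alpha_0^\star/(2\pi_K),\infty)$, together with $|\alpha(n;K) - \alpha_0^\star| \le \alpha_0^\star \Lip(\rho)/(N\inf\rho)$.

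Your discussion of the obstacle $\alpha_k^\star = 0$ and the proposed regularisation $\alpha_\eps^\star = (1-\eps)\alpha^\star + \eps\pi$ is off target. In your own factoring, the arguments of each $\theta_{k,k+1}$ are \emph{constants} independent of $n$, so no Lipschitz estimate in those variables is needed to control the $J$-error; it suffices that each $\theta_{k,k+1}(\alpha_k^\star/\pi_k,\alpha_{k+1}^\star/\pi_{k+1}) > 0$, which follows from $c^\star(\theta,\cT) < \infty$. The genuine need for a Lipschitz bound on $\theta$ arises only once the boundary correction above is recognised, and even then it concerns a single edge ($k = K-1$) in a single variable. The paper handles that case directly, without perturbing $\alpha^\star$.
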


\begin{proof}
Write $m = P_N^\star \mu$ and $J = P_N^\star j$ for brevity, and set $\bar\rho(n) := N \mu(\hA_n)$.
Recall that
\begin{align*}
\cA_N(m, J) = \frac{1}{N} \sum_{n=0}^{N-1} \cA_N^n(m, J)
\end{align*}
where
\begin{align*}
	\cA_N^n(m,J)
	& = \sum_{k=0}^{K-1}
		d_{k,k+1} f_{k,k+1}\bigg(N \frac{m(n;k)}{\pi_k}, N \frac{m(n;k+1)}{\pi_{k+1}}, J(n;k,k+1) \bigg)
	\\& = \frac{1}{\bar\rho(n)}\sum_{k=0}^{K-1} d_{k,k+1}
		\frac{(J(n;k,k+1))^2}{ \theta_{k,k+1} \big( \frac{\alpha(n;k)}{\pi_k}, \frac{\alpha(n;k+1)}{\pi_{k+1}} \big)} \ ,
\end{align*}
with $\alpha(n;k) := \frac{\emm(n;k)}{\mu(\hA_n)}$ for $k = 0, \ldots, K$.
Note that $\alpha(n;k) = \alpha_k^\star$ for $k = 0, \ldots, K-1$, but
\begin{align*}
	\alpha(n;K)
		= \frac{m(n+1;0)}{\mu(\hA_n)}
		= \alpha_0^\star \frac{\mu(\hA_{n+1})}{\mu(\hA_n)} \ ,
\end{align*}
which is not necessarily equal to $\alpha_K^\star = \alpha_0^\star$.
Therefore, $\{ \alpha(n;k) \}_{k = 0}^{K}$ is not necessarily an admissible competitor in \eqref{eq: hom-constant for K-periodic case}.
Write $\dd \mu(x) = \rho(x) \dd x$.
We claim that the following estimates hold for sufficiently large $N$, with $C < \infty$ depending only on $\theta$ and $\cT$:
\begin{align}
\label{eq:J-bound}
	| J(n;k,k+1) - j(x) |
	& \leq \frac{\Lip(j)}{N} \ , \\
\label{eq:theta-bound}
	\Bigg| c^\star(\theta, \cT) - \sum_{k=0}^{K-1} \frac{d_{k,k+1}}
		{ \theta_{k,k+1} \Big( \frac{\alpha(n;k)}{\pi_k}, \frac{\alpha(n;k+1)}{\pi_{k+1}} \Big)} \Bigg| &  \leq 	C
					\frac{\Lip(\rho)}{\inf \rho} \frac{1}{N}\ ,
\end{align}
the first one being valid for any $x \in \big[\frac{n}{N}, \frac{n+1}{N}\big]$ and $k = 0, \ldots, K-1$. Indeed, writing $\lambda_k = \sum_{\ell={k+1}}^{K-1} \alpha_\ell^\star$, we obtain
\[
| J(n;k,k+1) - j(x) |
	\leq \lambda_k | j(\tfrac{n}{N}) - j(x) |
				+ (1-\lambda_k) | j(\tfrac{n+1}{N}) - j(x)|
	\leq \frac{\Lip(j)}{N} \ ,
\]
which proves \eqref{eq:J-bound}. 
Furthermore, 
\begin{align*}
	E & := \left| c^\star(\theta, \cT)
		- \sum_{k=0}^{K-1} \frac{d_{k,k+1}}{
			\theta_{k,k+1} \Big(
			 \frac{\alpha(n;k)}{\pi_k},
			 \frac{\alpha(n;k+1)}{\pi_{k+1}} \Big)} \right|
	\\&\phantom{:}=
		d_{K-1,K} \left|
		\frac{1}{ \theta_{K-1, K} \Big(
			\frac{\alpha_{K-1}^\star}{\pi_{K-1}},
			\frac{\alpha_K^\star}{\pi_K} \Big)}
			- \frac{1}{\theta_{K-1, K} \Big(
				\frac{\alpha_{K-1}^\star}{\pi_{K-1}},
				\frac{\alpha(n;K)}{\pi_K} \Big)}
				\right| \ .
\end{align*}
Note that
\begin{align*}
	|  \alpha_K^\star - \alpha(n;K)|
		= \alpha_K^\star \bigg| 1 - \frac{\mu(\hA_{n+1})}{\mu(\hA_n)} \bigg|
	  = \alpha_K^\star \frac{| \mu(\hA_n) - \mu(\hA_{n+1}) |}{\mu(\hA_n)}
	 \leq \frac{\alpha_K^\star}{N}
				\frac{\Lip(\rho)}{\inf \rho} \ .
\end{align*}
If $\alpha_K^\star = 0$, we infer that $E = 0$, in which case the claim is proved. 
If $\alpha_K^\star > 0$, we observe that the latter inequality yields
\begin{equation} \label{eq: upp bound-lower bound density}
	\alpha(n;K) \geq 
\frac{\alpha_K^\star}{2}
\end{equation}
for $N$ sufficiently large (depending on $\delta$).
Since $\theta_{K-1, K}$ is concave, we have for any $a \geq 0$ and $0 < b \leq y_1 < y_2$,
\begin{align*}
	  \frac{\theta_{K-1, K} 
		\big(a, y_2\big)
	- \theta_{K-1, K} 
		\big(a, y_1\big)}{y_2 - y_1}	
		\leq
	  \frac{\theta_{K-1, K} 
		\big(a, b\big)
	- \theta_{K-1, K} 
		\big(a, 0\big)}{b} \ ,
\end{align*}
thus $\theta_{K-1, K}(a, \cdot)$ is Lipschitz on $[b, \infty)$.
Let $L < \infty$ denote the Lipschitz constant of $\theta_{K-1, K}\big(\tfrac{\alpha_{K-1}^\star}{\pi_{K-1}}, \cdot \big)$ on $\big[ \frac{\alpha_K^\star}{2}, \infty \big)$.
For $N$ sufficiently large we obtain
\begin{align*}
E & \leq 
		 \frac{1}{\Big(\theta_{K-1, K} \Big(
				\frac{\alpha_{K-1}^\star}{\pi_{K-1}},
				\frac{\alpha_K^\star}{2\pi_K} \Big)\Big)^2}
			\Big| 
					{ \theta_{K-1, K} \Big(
			\tfrac{\alpha_{K-1}^\star}{\pi_{K-1}},
			\tfrac{\alpha_K^\star}{\pi_K} \Big)}
			- {\theta_{K-1, K} \Big(
				\tfrac{\alpha_{K-1}^\star}{\pi_{K-1}},
				\tfrac{\alpha(n;K)}{\pi_K} \Big)}
			 \Big|								 
	\\ & \leq
		 \frac{L}{\Big(\theta_{K-1, K} \Big(
				\tfrac{\alpha_{K-1}^\star}{\pi_{K-1}},
				\tfrac{\alpha_K^\star}{2\pi_K} \Big)\Big)^2}
				\frac{\alpha_K^*}{\pi_K}
				\frac{\Lip(\rho)}{\inf \rho}
				\frac{1}{N}
\end{align*}
which yields our claim \eqref{eq:theta-bound}.

Taking into account that $\bar\rho(n) \geq \delta$ and $\| j \|_\infty \leq \delta^{-1}$, 
it follows from \eqref{eq:J-bound} and a twofold application of \eqref{eq:theta-bound} that
\begin{align*}
	\bigg| \cA_N^n(m, J) - c^\star(\theta, \cT) \frac{j^2\big(\frac{n}{N}\big)}{\bar\rho(n)}
 \bigg|
	& \leq
			 \frac{j^2\big(\tfrac{n}{N}\big)}{\bar\rho(n)}
		\Bigg| c^\star(\theta, \cT)
			- \sum_{k=0}^{K-1} \frac{d_{k,k+1}}
		{ \theta_{k,k+1} \Big( \frac{\alpha(n;k)}{\pi_k}, \frac{\alpha(n;k+1)}{\pi_{k+1}} \Big)} \Bigg|
	\\&	\qquad	+
 \frac{1}{\bar\rho(n)} \sum_{k=0}^{K-1} d_{k,k+1}
		\frac{\big|J^2(n;k,k+1)
			- j^2\big(\frac{n}{N}\big) \big|}{ \theta_{k,k+1}
			\Big( \frac{\alpha(n;k)}{\pi_k},
					    \frac{\alpha(n;k+1)}{\pi_{k+1}} \Big)}
	\\& \leq
  		\frac{C}{N} + \frac{C}{N}
		\sum_k \frac{d_{k,k+1}}{ \theta_{k,k+1}
			\Big( \frac{\alpha(n;k)}{\pi_k},
					    \frac{\alpha(n;k+1)}{\pi_{k+1}} \Big)}	
	\\& \leq \frac{C}{N} \ ,
\end{align*}
where $C < \infty$ depends on $\cT$, $\theta$, and $\delta$.
Consequently,
\begin{align*}
	\cA_N(m, J)
	\leq \frac{  c^\star(\theta,\cT) }{N} \sum_{n=0}^{N-1}
		\frac{j^2\big(\frac{n}{N}\big)}{\bar\rho(n)}
		 	+ \frac{C}{N} \ .
\end{align*}
By the arithmetic-harmonic mean inequality,
\begin{align*}
	\frac{\big|j\big(\frac{n}{N}\big)\big|^2}{\bar\rho(n)}
	\leq N\big|j\big(\tfrac{n}{N}\big)\big|^2 \int_{\hA_n} \frac{1}{\rho(x)} \dd x
	\leq N \int_{\hA_n} \frac{|j(x)|^2}{\rho(x)} \dd x
	   + \frac{C}{N}  \ .
\end{align*}
We infer that
\begin{align*}
	\cA_N(m, J) \leq c^\star(\theta, \cT) \bA(\mu, j) + \frac{C}{N} \ ,
\end{align*}
which completes the proof.
\end{proof}

The previous result shows that the sought upper bound can be achieved once we assume some regularity of the solution of the continuity equation. Therefore in order to conclude the proof of Theorem \ref{thm:main} we seek once again for a regularization procedure.

The following result collects some well-known properties of the heat semigroup $(H_s)_{s \geq 0}$ on $\cP(\cS^1)$.

\begin{lemma}[Regularisation by heat flow]\label{lem:heat-flow-reg}
Let $s > 0$. 
There exists a constant $\delta > 0$ such that for any $\mu \in \cP(\cS^1)$ we have $H_s \mu \in \cP_\delta(\cS^1)$. 
Moreover, $\bW(\mu, H_s \mu) \leq \sqrt{2s}$.
\end{lemma}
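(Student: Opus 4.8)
The plan is to derive the regularity statements from the explicit form of the heat kernel on $\cS^1$, and the metric estimate from a Brownian coupling.

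\emph{Regularity.} Recall that $H_s\mu$ has density $\rho_s(x) = \int_{\cS^1} p_s(x-y)\dd\mu(y)$, where $p_s$ denotes the heat kernel on $\cS^1$ (the periodisation of the Gaussian kernel), which for every fixed $s>0$ is a smooth, strictly positive function on $\cS^1$. Since $\mu$ is a probability measure, the lower bound is immediate:
\[
\rho_s(x) \ \geq\ \min_{z\in\cS^1} p_s(z) \ =:\ \delta_1 \ >\ 0 \qquad\text{for all }x\in\cS^1 .
\]
For the Lipschitz bound, differentiating under the integral sign gives $|\rho_s(x)-\rho_s(x')|\leq \|p_s'\|_{L^\infty(\cS^1)}\,\sd(x,x')$, i.e. $\Lip(\rho_s)\leq L_s:=\|p_s'\|_{L^\infty(\cS^1)}<\infty$. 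Both $\delta_1$ and $L_s$ depend only on $s$, not on $\mu$. Setting $\delta:=\min\{\delta_1,L_s^{-1}\}$ we obtain $\rho_s\geq\delta$ and $\Lip(\rho_s)\leq\delta^{-1}$, that is, $H_s\mu\in\cP_\delta(\cS^1)$, uniformly in $\mu$.

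\emph{Wasserstein estimate.} Consistently with the normalisation $\partial_t\mu=\partial_x^2\mu$ used throughout the paper, I would realise $H_s\mu$ as the law of $X_s:=X_0+\sqrt2\,B_s\Mod{1}$, where $X_0$ has law $\mu$ and $(B_t)_{t\geq0}$ is a standard real Brownian motion independent of $X_0$; indeed, the law of $\sqrt2\,B_s$ wrapped around $\cS^1$ has precisely density $p_s$. The pair $(X_0,X_s)$ is then a coupling of $\mu$ and $H_s\mu$, and using $\sd\big(a,\,a+v\Mod{1}\big)\leq|v|$ for any lift $v\in\R$ applied to $v=\sqrt2\,B_s$, we get
\[
\bW^2(\mu,H_s\mu)\ \leq\ \mathbb{E}\big[\sd(X_0,X_s)^2\big]\ \leq\ \mathbb{E}\big[2B_s^2\big]\ =\ 2s ,
\]
so $\bW(\mu,H_s\mu)\leq\sqrt{2s}$.

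I do not anticipate a genuine obstacle here; the lemma packages standard facts. The only points needing (routine) care are: smoothness and strict positivity of $p_s$ for $s>0$; checking via Fubini that $\rho_s$ is indeed the density of $H_s\mu$ and that $\mu\mapsto H_s\mu$ is well defined on all of $\cP(\cS^1)$; and — the one place an error could creep in — fixing the normalisation of the heat flow so that the Brownian scaling yields the constant $\sqrt{2s}$ rather than $\sqrt{s}$. An alternative to the coupling for the last estimate is to bound the metric speed of the heat flow, $|\dot\mu_t|^2\leq\int|\partial_x\log\rho_t|^2\dd\mu_t$, by the Fisher information via the Benamou--Brenier formula; but since the Fisher information need not be integrable near $t=0$ for arbitrary $\mu\in\cP(\cS^1)$ without a short-time argument, the coupling route is cleaner.
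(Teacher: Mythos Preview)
Your proof is correct. The paper itself does not supply a proof of this lemma at all; it simply cites \cite[Proposition~2.9]{Gigli-Maas:2013} for these ``well-known facts''. Your argument via the explicit heat kernel (for the uniform lower bound and Lipschitz estimate on the density) and the Brownian coupling (for the $\bW$ bound) is the standard way to unpack that citation, and your remark about the normalisation $\partial_t\mu=\partial_x^2\mu$ giving $\sqrt{2s}$ rather than $\sqrt{s}$ is exactly the point to watch.
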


\begin{proof}
See, e.g., \cite[Proposition 2.9]{Gigli-Maas:2013} for a proof of these well-known facts.
\end{proof}

We continue with a well-known regularisation result. For the convenience of the reader we include a simple proof.

\begin{lemma}[Smooth approximate action minimisers]\label{lem:smooth-action}
Let $\delta > 0$ and let $\eps > 0$. Then there exists $\tilde \delta \in (0, \delta)$, such that the following assertion holds: for any $\mu_0, \mu_1 \in \cP_\delta(\cS^1)$ there exists a curve $(\mu_t, j_t) \in \bCE(\mu_0, \mu_1)$ with $\mu_t \in \cP_{\tilde \delta}(\cS^1)$ and $\|j_t\|_{L^\infty} + \Lip(j_t) \leq \tilde \delta^{-1}$ for any $t \in (0,1)$, such that
\begin{align*}
	\int_0^1 \bA(\mu_t, j_t) \dd t  \leq \bW^2(\mu_0, \mu_1)  + \eps \ .
\end{align*}
\end{lemma}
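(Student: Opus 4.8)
The plan is to build the competitor by concatenating three curves in time: on a short initial interval $[0,\kappa]$ a cheap curve from $\mu_0$ to a regularised measure $\mu_0^\star$, on $[\kappa,1-\kappa]$ a reparametrised $\bW$-geodesic from $\mu_0^\star$ to $\mu_1^\star$, and on $[1-\kappa,1]$ a cheap curve from $\mu_1^\star$ to $\mu_1$. Here $\mu_i^\star:=H_s\mu_i$ is obtained from $\mu_i$ by a small amount of heat flow. The point of passing to $\mu_i^\star$ is that its density is $C^\infty$ while still being $\ge\delta$ and still having Lipschitz constant $\le\delta^{-1}$ (both properties are preserved under convolution with the heat kernel, a probability density), so $\mu_i^\star\in\cP_\delta(\cS^1)$; moreover $\|\rho_i-H_s\rho_i\|_{L^\infty}\le C\sqrt s\,\delta^{-1}=:\omega(s)\to0$ as $s\downarrow0$, using $\Lip(\rho_i)\le\delta^{-1}$, and by Lemma~\ref{lem:heat-flow-reg} the smoothing of $\mu_i$ costs little in $\bW$.

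For the middle piece I would take a constant-speed $\bW$-geodesic $(\hat\mu_t,\hat j_t)_{t\in[0,1]}$ between $\mu_0^\star$ and $\mu_1^\star$, so $\int_0^1\bA(\hat\mu_t,\hat j_t)\,\dd t=\bW^2(\mu_0^\star,\mu_1^\star)\le\bW^2(\mu_0,\mu_1)$ since the heat semigroup is a $\bW$-contraction, and reparametrise it onto $[\kappa,1-\kappa]$; reparametrisation inflates the action by the factor $(1-2\kappa)^{-1}$, costing at most $\tfrac14\cdot\tfrac{2\kappa}{1-2\kappa}$ because $\bW^2\le\tfrac14$ on $\cS^1$, so this piece contributes at most $\bW^2(\mu_0,\mu_1)+\tfrac\eps3$ once $\kappa$ is small. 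The substantive point is that this geodesic is \emph{interior-regular}: there is $\delta'=\delta'(\delta,s)>0$ with $\delta'\le\hat\rho_t\le\delta'^{-1}$, $\Lip(\hat\rho_t)\le\delta'^{-1}$ and $\|\hat j_t\|_{L^\infty}+\Lip(\hat j_t)\le\delta'^{-1}$ uniformly for $t\in[0,1]$. I would obtain this from the explicit one-dimensional description of optimal transport on the circle: the optimal map $T$ from $\mu_0^\star$ to $\mu_1^\star$ is, after a rotation, the monotone rearrangement $F_{\mu_1^\star}^{-1}\circ F_{\mu_0^\star}$, hence a $C^\infty$ circle diffeomorphism with $T'=\rho_0^\star/(\rho_1^\star\circ T)$ bounded above and below and with higher derivatives controlled by $\|\rho_i^\star\|_{C^k}$ (finite for fixed $s>0$). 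Then $\hat\mu_t$ is the pushforward of $\mu_0^\star$ under the diffeomorphism $(1-t)\mathrm{id}+tT$, with $\hat\rho_t(y)=\rho_0^\star(x)/\big((1-t)+tT'(x)\big)$ for $y=(1-t)x+tT(x)$ and $\hat j_t(y)=\hat\rho_t(y)\cdot(T(x)-x)$, and all the stated bounds follow by differentiating these formulas. This quantitative interior regularity of a geodesic between two smooth, uniformly positive densities is where I expect to spend the most care — in particular, on the circle one must track the rotation parameter and check that $(1-t)\mathrm{id}+tT$ stays a diffeomorphism, which holds since $T'>0$.

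For the two caps I would use \emph{linear interpolations of measures}, whose decisive feature is that their momentum field is constant in time. On $[0,\kappa]$ set $\mu_t:=(1-\tfrac t\kappa)\mu_0+\tfrac t\kappa\mu_0^\star$ and $j_t:=\tfrac1\kappa w^{(0)}$, where $w^{(0)}(x):=\int_0^x(\rho_0-\rho_0^\star)$ is well defined on $\cS^1$ because $\int_{\cS^1}(\rho_0-\rho_0^\star)=0$; one checks $\partial_t\mu_t+\partial_x j_t=0$. Since $\rho_t\ge\delta$ and $\Lip(\rho_t)\le\delta^{-1}$ we get $\mu_t\in\cP_\delta(\cS^1)$, while $\|j_t\|_{L^\infty}+\Lip(j_t)\le\tfrac1\kappa\big(\|w^{(0)}\|_{L^\infty}+\|\rho_0-\rho_0^\star\|_{L^\infty}\big)\le\tfrac{2\omega(s)}\kappa$ \emph{uniformly in $t\in[0,\kappa]$}, with no blow-up at $t=0$ — this is exactly why a time-independent $w^{(0)}$ is used rather than the heat-flow curve, whose momentum $-\partial_x H_u\rho_0$ has $\Lip$-norm $\sim u^{-1/2}$ as $u\downarrow0$. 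The action of this piece is $\tfrac1\kappa\int_0^1\bA\big((1-r)\mu_0+r\mu_0^\star,w^{(0)}\big)\,\dd r\le\tfrac1{\kappa\delta}\|w^{(0)}\|_{L^\infty}^2\le\tfrac{\omega(s)^2}{\kappa\delta}$, which is $\le\tfrac\eps3$ once $s$ is small. The cap on $[1-\kappa,1]$ is symmetric, with $w^{(1)}(x):=\int_0^x(\rho_1^\star-\rho_1)$.

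Finally I would fix the parameters in the order $\kappa$ (depending on $\eps$), then $s$ (depending on $\eps,\kappa,\delta$), then $\tilde\delta\in(0,\delta)$ small enough — so that $\tilde\delta\le\delta'$ and $\tilde\delta\le\kappa/(2\omega(s))$ and $(1-2\kappa)^{-1}\delta'^{-1}\le\tilde\delta^{-1}$ — that every bound above holds with $\tilde\delta^{-1}$ on the right; all three depend only on $\delta$ and $\eps$. The concatenated pair $(\mu_t,j_t)_{t\in[0,1]}$ lies in $\bCE(\mu_0,\mu_1)$: the pieces agree at $t=\kappa$ and $t=1-\kappa$ (equal to $\mu_0^\star$, resp. $\mu_1^\star$), so $t\mapsto\mu_t$ is weakly continuous, and $\int_0^1|j_t|(\cS^1)\,\dd t\le\int_0^1\sqrt{\bA(\mu_t,j_t)}\,\dd t<\infty$ by Cauchy–Schwarz. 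It satisfies $\mu_t\in\cP_{\tilde\delta}(\cS^1)$ and $\|j_t\|_{L^\infty}+\Lip(j_t)\le\tilde\delta^{-1}$ for every $t\in(0,1)$, and its total action is at most $\tfrac\eps3+\big(\bW^2(\mu_0,\mu_1)+\tfrac\eps3\big)+\tfrac\eps3=\bW^2(\mu_0,\mu_1)+\eps$, which is the assertion.
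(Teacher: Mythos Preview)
Your proof is correct and follows the same three-piece (cap/middle/cap) template as the paper, but the two key pieces are implemented differently. For the middle segment the paper applies the heat semigroup $H_s$ to a $\bW$-geodesic $(\mu_t,j_t)$ between the \emph{original} endpoints: since $H_s$ is convolution by a probability kernel, $(H_s\mu_t,H_sj_t)$ still solves the continuity equation and $\bA(H_s\mu_t,H_sj_t)\le\bA(\mu_t,j_t)$ by joint convexity, while the regularity of $H_sj_t$ comes from $L^1$--$L^\infty$ and $L^1$--Lipschitz heat-kernel smoothing together with $\|j_t\|_{L^1}\le\sqrt{\bA(\mu_t,j_t)}=\bW(\mu_0,\mu_1)\le\tfrac12$. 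This is slicker than your route, which relies on the explicit one-dimensional optimal-transport structure on $\cS^1$ (monotone map up to a rotation) to extract uniform interior regularity of the geodesic between $H_s\mu_0$ and $H_s\mu_1$; the paper bypasses that theory entirely. On the caps, however, your linear-interpolation construction with time-independent momentum $w^{(i)}$ is more careful on exactly the point you flag: the paper connects $\mu_i$ to $H_s\mu_i$ by the heat flow itself, with momentum proportional to $-s\,\partial_x H_{su}\rho_i$, and controls its \emph{action} via contractivity of the Fisher information, but the uniform Lipschitz bound on that momentum involves $\|\partial_x^2 H_{su}\rho_i\|_\infty$, which for merely Lipschitz $\rho_i$ behaves like $(su)^{-1/2}$ as $u\downarrow0$ and is not explicitly addressed there. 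Your time-constant $w^{(i)}$ sidesteps this and delivers the uniform-in-$t$ bound the lemma actually asserts.
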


\begin{proof}
Let $(\mu_t)_{t \in [0,1]}$ be a $\bW$-geodesic connecting $\mu_0$ and $\mu_1$, and let $(j_t)_{t \in [0,1]}$ be a vector field such that
\begin{align*}
	\int_0^1 \bA(\mu_t, j_t) \dd t = \bW^2(\mu_0, \mu_1) \ .
\end{align*}
The idea of the proof is to regularise $\mu_0$ and $\mu_1$ by applying the heat flow for a short time $s > 0$, and then to connect the regularised measures $H_s \mu_0$ and $H_s \mu_1$ using the natural candidate $(H_s \mu_t)_{t \in [0,1]}$.

Firstly, for $i = 0, 1$ and $s > 0$, set $\gamma_t^{i,s} = H_{st} \mu_i$ for $t \in [0,1]$, and let $\rho_t^{i,s}$ be the density of $\gamma_t^{i,s}$ with respect to the Haar measure. 
Then: $\partial_t \gamma_t^{i,s} = s \partial_x^2 \gamma_t^{i,s}$, thus the continuity equation $\partial_t \rho_t^{i,s} + \dive k_t^{i,s} = 0$ holds with $k_t^{i,s} = - s \partial_x \rho_t^{i,s}$.
Using the contractivity of the Fisher information under the heat flow, and the fact that $\mu_i \in \cP_\delta(\cS^1)$, we obtain
\begin{align*}
	\int_0^1 \bA(\gamma_t^{i,s}, k_t^{i,s}) \dd t
	& = 	s^2 \int_0^1  \int_{\cS^1} 
				\frac{|\partial_x \rho_t^{i,s}(x)|^2}{\rho_t^s(x)} 
			\dd x \dd t
   \leq 	s^2 \int_{\cS^1} 
				\frac{|\partial_x \rho_0^{i,s}(x)|^2}{\rho_0^s(x)} 
			\dd x
	  \leq \frac{s^2}{\delta^3} \ .
\end{align*}

Secondly, for any $s > 0$, we note that $(H_s \mu_t, H_s j_t)_{t \in [0,1]}$ solves the continuity equation, and, by the joint convexity of $\bA$ and the fact that $H_s$ is given by a convolution kernel, 
\begin{align*}
	\bA(H_s \mu_t, H_s j_t) \leq \bA(\mu_t, j_t) \ . 
\end{align*}

Fix $\tau \in (0,\frac{1}{4})$, and consider now the curve $(\tilde\mu_t, \tilde j_t)_{t \in [0,1]} \in \bCE(\mu_0, \mu_1)$ defined by 
\begin{align*}
\tilde\mu_t := \begin{cases}
H_{t s/ \tau} \mu_0 &  \\
H_{s} \mu_{(t- \tau)/(1- 2 \tau)} &  \\
H_{(1-t) s/ \tau} \mu_1 &  \\
\end{cases} \ ,
\qquad 
\tilde j_t := \begin{cases}
	\frac{1}{\tau} k_{t/\tau}^{0,s}  
		& t \in ( 0, \tau) \\
	\frac{1}{1 - 2\tau} H_s j_{(t- \tau)/(1- 2 \tau)} 
		& t \in ( \tau, 1 - \tau ) \\
	-\frac{1}{\tau} k_{1-t/\tau}^{1,s} 
		& t \in ( 1- \tau, 1 )
\end{cases} \ ,
\end{align*}
It follows from the bounds above, using the fact that $\frac{1}{1-2\tau}\leq 1 + 4 \tau$ and $\bW^2 \leq \frac14$, that
\begin{align*}
	\int_0^1 \bA(\tilde\mu_t, \tilde j_t) \dd t
	& = \int_0^1 \frac{\bA(\gamma_t^{0,s}, k_t^{0,s}) }{\tau}  
	+ \frac{\bA(H_s \mu_t, H_s j_t)  }{1 - 2\tau} 
	+ \frac{\bA(\gamma_t^{1,s}, k_t^{1,s})}{\tau}  \dd t
	\\&\leq  \frac{s^2}{\delta^3\tau} + \frac{\bW^2(\mu_0, \mu_1)}{1 - 2\tau}  + \frac{s^2}{\delta^3\tau}
	\\&\leq  \frac{s^2}{\delta^3\tau} + (\bW^2(\mu_0, \mu_1)  + \tau) + \frac{s^2}{\delta^3\tau} \ .
\end{align*}
Let $\eps > 0$, and choose $\tau = \eps/2$, and $s^2 = \delta^3 \tau \eps/4$. Then: $\int_0^1 \bA(\tilde\mu_t, \tilde j_t) \dd t \leq \bW^2(\mu_0, \mu_1) + \eps$.

Moreover, by Lemma \ref{lem:heat-flow-reg}, $\tilde \mu_t$ belongs to $\cP_{\tilde \delta}(\cS^1)$ for some $\tilde\delta > 0$ depending on $\delta$ and $s$.
Furthermore, 
\begin{align*}
	\| H_s j_t \|_{L^\infty(\cS^1)}  
		+  \| \partial_x H_s j_t \|_{L^\infty(\cS^1)} 
	\leq C(s) \| j_t\|_{L^1(\cS^1)} 
	\leq C(s) \sqrt{ \bA(\mu_t, j_t) }
		= C(s) \bW(\mu_0, \mu_1)  \ ,
\end{align*}
where the last inequality follows from the Cauchy-Schwarz inequality.
\end{proof}

We are now ready to prove the upper bound in Theorem \ref{thm:main}.

\begin{theorem}[Upper bound for $\cW_N$] 	\label{thm:upperbound_quantitative}
For any mesh $\cT$ and any family of admissible means $\{\theta_{k,k+1}\}_k$ we have 
\begin{align*}
		\limsup_{N \to \infty} \cW_N^2( P_N \mu_0, P_N \mu_1)
		\leq	
		c^\star(\theta,\cT) \bW^2(\mu_0, \mu_1)
		 \ ,
\end{align*}
uniformly for all $\mu_0, \mu_1 \in \cP(\cS^1)$. More precisely,  
for any $\eps > 0$ there exists $\bar N \in \N$ such that for any
$N \geq \bar N$ and $\mu_0, \mu_1 \in \cP(\cS^1)$, we have
\begin{equation} \label{eq:upperbound_quantitative}
		\cW_N^2(P_N \mu_0, P_N \mu_1)
			\leq 
			c^\star(\theta,\cT) \bW^2( \mu_0, \mu_1) 
		 	+ \eps \ .
\end{equation}
\end{theorem}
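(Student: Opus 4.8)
The plan is to transfer an (almost optimal) curve at the continuous level to the discrete mesh by means of the oscillating projection $P_N^\star$, whose definition already encodes the optimal profile $\alpha^\star$ from \eqref{eq: hom-constant for K-periodic case}, and to estimate its discrete action by Proposition~\ref{prop:action-upper-bound}. Since that proposition requires spatial regularity of the continuous data, I would first regularise the endpoints by the heat flow (Lemma~\ref{lem:heat-flow-reg}) and connect the regularised measures by a smooth approximate action minimiser (Lemma~\ref{lem:smooth-action}). Because the endpoints of the resulting discrete curve are $P_N^\star H_s\mu_i$ rather than $P_N\mu_i$, the construction has to be patched up at the two ends using the coarse bound of Proposition~\ref{prop:rough-upper-bound} together with the consistency estimates $\bW(\mu,\iota_N P_N\mu)\le \tfrac1N$ and Lemma~\ref{lem:almost-identity}.

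\textbf{The competitor and its action.} Fix $\eps>0$ and $\mu_0,\mu_1\in\cP(\cS^1)$, and let $\eps'>0$ and $s>0$ be parameters to be chosen at the end. Set $\nu_i:=H_s\mu_i$, so that $\nu_i\in\cP_\delta(\cS^1)$ for some $\delta=\delta(s)>0$ by Lemma~\ref{lem:heat-flow-reg}, with $\bW(\mu_i,\nu_i)\le\sqrt{2s}$, hence $\bW^2(\nu_0,\nu_1)\le\bW^2(\mu_0,\mu_1)+C\sqrt{s}$ by the triangle inequality and $\bW\le\tfrac12$. Apply Lemma~\ref{lem:smooth-action} with this $\delta$ and $\eps'$ to obtain $\tilde\delta\in(0,\delta)$ and a curve $(\mu_t,j_t)_t\in\bCE(\nu_0,\nu_1)$ with $\mu_t\in\cP_{\tilde\delta}(\cS^1)$, $\|j_t\|_{L^\infty}+\Lip(j_t)\le\tilde\delta^{-1}$ for $t\in(0,1)$, and $\int_0^1\bA(\mu_t,j_t)\dd t\le\bW^2(\nu_0,\nu_1)+\eps'$. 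Now put $m_t:=P_N^\star\mu_t$, $J_t:=P_N^\star j_t$; by Proposition~\ref{prop:CE-cont-to-disc} this pair solves the discrete continuity equation on $\cT_N$ with endpoints $m_i=P_N^\star\nu_i$, and it is a legitimate competitor for $\cW_N(P_N^\star\nu_0,P_N^\star\nu_1)$. Applying Proposition~\ref{prop:action-upper-bound} (with $\delta$ there replaced by $\tilde\delta$) at each time $t$ gives $C<\infty$ and $\bar N\in\N$, depending only on $\tilde\delta$, such that for $N\ge\bar N$
\[
\cW_N^2(P_N^\star\nu_0,P_N^\star\nu_1)\le\int_0^1\cA_N(m_t,J_t)\dd t\le c^\star(\theta,\cT)\int_0^1\bA(\mu_t,j_t)\dd t+\frac CN\le c^\star(\theta,\cT)\bW^2(\mu_0,\mu_1)+c^\star(\theta,\cT)\eps'+\frac{C'}{N}.
\]

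\textbf{Endpoint correction and conclusion.} For $i=0,1$, Proposition~\ref{prop:rough-upper-bound} (the meshes $\cT_N$ are $\zeta$-regular uniformly in $N$ by Remark~\ref{rem:reg}) together with $\bW(\iota_N P_N\mu_i,\mu_i)\le\tfrac1N$, $\bW(\mu_i,\nu_i)\le\sqrt{2s}$ and Lemma~\ref{lem:almost-identity} yields
\[
\cW_N(P_N\mu_i,P_N^\star\nu_i)\le C_\zeta\Big(\bW(\iota_N P_N\mu_i,\iota_N P_N^\star\nu_i)+[\cT_N]\Big)\le C_\zeta\Big(\sqrt{2s}+\tfrac{C''}{N}\Big).
\]
Writing $a:=\cW_N(P_N^\star\nu_0,P_N^\star\nu_1)$ and $b$ for the sum of the two outer terms, the triangle inequality for $\cW_N$ gives $\cW_N^2(P_N\mu_0,P_N\mu_1)\le(a+b)^2=a^2+2ab+b^2$. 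Here $a$ is bounded by an absolute constant (as $\bW\le\tfrac12$, $c^\star\le1$, and $\eps',C'/N$ are bounded), so $\cW_N^2(P_N\mu_0,P_N\mu_1)\le c^\star(\theta,\cT)\bW^2(\mu_0,\mu_1)+c^\star(\theta,\cT)\eps'+C'/N+2ab+b^2$. Choosing first $\eps'$ small, then $s$ small (which fixes $\delta,\tilde\delta,C,C',C'',\bar N$, all independent of $\mu_0,\mu_1$), and finally $\bar N$ large enough to absorb the remaining $1/N$ terms, makes the last three summands together $\le\eps$, which is \eqref{eq:upperbound_quantitative}; all constants are uniform in $\mu_0,\mu_1\in\cP(\cS^1)$.

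\textbf{Main obstacle.} I expect no deep difficulty beyond Proposition~\ref{prop:action-upper-bound} itself, which is already established; the delicate points in the assembly are (i) checking that the curve from Lemma~\ref{lem:smooth-action} lies, \emph{uniformly in $t$}, in the regularity class $\cP_{\tilde\delta}(\cS^1)$ with the stated Lipschitz control on $j_t$, so that Proposition~\ref{prop:action-upper-bound} applies with constants independent of $t$ (and of $\mu_0,\mu_1$), and (ii) bridging the gap between the plain projection $P_N$ in the statement and the oscillating projection $P_N^\star$ at the endpoints, together with organising the nested choice of the parameters $(\eps',s,N)$ so that uniformity over $\cP(\cS^1)$ is preserved.
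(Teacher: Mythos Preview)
Your proposal is correct and follows the same route as the paper: heat-flow regularisation of the endpoints, Lemma~\ref{lem:smooth-action} for a smooth approximate geodesic, discretisation via $P_N^\star$ and Proposition~\ref{prop:CE-cont-to-disc}, the key estimate Proposition~\ref{prop:action-upper-bound}, and an endpoint patch using Proposition~\ref{prop:rough-upper-bound}. Two small remarks: (i) in your displayed chain for $\cW_N^2(P_N^\star\nu_0,P_N^\star\nu_1)$ the term $c^\star(\theta,\cT)\,C\sqrt{s}$ coming from $\bW^2(\nu_0,\nu_1)\le\bW^2(\mu_0,\mu_1)+C\sqrt s$ has been silently dropped---reinstate it so the later choice of $s$ actually does its job; (ii) your endpoint correction, which bridges $P_N\mu_i$ and $P_N^\star\nu_i$ in a single step via $\bW(\iota_N P_N\mu_i,\iota_N P_N^\star\nu_i)\le \tfrac1N+\sqrt{2s}+\tfrac1N$, is in fact slightly more complete than the paper's version, which literally only bounds $\cW_N^2(P_N^\star\mu_0,P_N^\star\mu_1)$ and leaves the passage to $P_N\mu_i$ implicit.
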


\begin{proof}
Let $\mu_0, \mu_1 \in \cP(\cS^1)$ and $\eps \in (0,1]$.
By Lemma \ref{lem:heat-flow-reg} there exist $s \geq 0$ and $\delta > 0$ such that $\tilde \mu_i := H_s \mu_i$ belongs to $\cP_\delta(\cS^1)$, and
\begin{align*}
	\bW(\mu_i, \tilde \mu_i) \leq \eps \quad \text{for $i = 0,1$} \ .
\end{align*}
Using that $\bW \leq \frac12$, it follows that
\begin{align}\label{eq:upper-1}
	\bW^2(\tilde \mu_0, \tilde \mu_1)
	\leq 
	\bW^2(\mu_0, \mu_1) + 2 \eps \ .
\end{align}
Lemma \ref{lem:smooth-action} yields $\tilde \delta \in (0,\delta)$ and a curve
 $(\tilde \mu_t, \tilde j_t)_t \in \bCE_{\tilde \delta}(\tilde \mu_0, \tilde \mu_1)$ such that 
  $\tilde \mu_t \in \cP_{\tilde \delta}(\cS^1)$ and $\|\tilde j_t\|_{L^\infty} + \Lip(\tilde j_t) \leq \tilde \delta^{-1}$ for any $t \in (0,1)$, and
\begin{align}\label{eq:upper-2}
	\int_0^1 \bA(\tilde \mu_t, \tilde j_t) \dd t 
		\leq
	\bW^2(\tilde \mu_0, \tilde \mu_1) +  \eps \ .
\end{align}
Set $\tilde m_t^N := P_N^\star \tilde \mu_t$ and $\tilde J_t^N = P_N^\star \tilde j_t$.
By Proposition \ref{prop:action-upper-bound} there exist $\bar N \in \N$ and $C_1 < \infty$ depending on $\eps$ (through $\tilde \delta$) such that for $N \geq \bar N$,
\begin{align} \label{eq:upper-3}
	\cW_N^2(\tilde m_0^N, \tilde m_1^N)
	\leq \int_0^1 \cA(\tilde m_t^N, \tilde J_t^N) \dd t
	\leq c^\star(\theta, r) \int_0^1 \bA(\tilde \mu_t, \tilde j_t) \dd t
		+ \frac{C_1}{N} \ .
\end{align}
Set $\emm_i^N := P_N^\star \mu_i$ for $i = 0,1$. 
By Proposition \ref{prop:rough-upper-bound}, Lemma \ref{lem:almost-identity}, and Lemma \ref{lem:heat-flow-reg}, there exists $C_2 < \infty$ depending only on $\cT$ (possibly varying from line to line) such that
\begin{align*}
	\cW_N(\emm_i^N, \tilde \emm_i^N)
	& = \cW_N(P_N^\star \mu_i, P_N^\star H_s \mu_i)
	\\& \leq C_2 \Big( \bW(\iota_N P_N^\star \mu_i, \iota_N P_N^\star H_s \mu_i) + \frac{1}{N} \Big)
	\\& \leq C_2 \Big( \bW(\mu_i, H_s \mu_i) + \frac{1}{N} \Big)
	\\& \leq C_2 \Big( \sqrt{s} + \frac{1}{N} \Big) \ .
\end{align*}
Thus, the triangle inequality yields
\begin{align*}
	\cW_N(m_0^N, m_1^N)
		\leq \cW_N(\tilde m_0^N, \tilde m_1^N)
			+ C_2 \Big( \sqrt{s} + \frac{1}{N} \Big) \ ,
\end{align*}
and by another application of Proposition \ref{prop:rough-upper-bound}, 
\begin{equation}\begin{aligned}
 \label{eq:upper-4}
	& \cW_N^2(m_0^N, m_1^N)
		 - \cW_N^2(\tilde m_0^N, \tilde m_1^N)
	\\& \qquad  \leq 
 \Big( \cW_N(m_0^N, m_1^N) 
			 	+ \cW_N(\tilde m_0^N, \tilde m_1^N) \Big) C_2 \Big( \sqrt{s} + \frac{1}{N} \Big) 
		\leq   C_2 \Big( \sqrt{s} + \frac{1}{N} \Big)
		 \ .
\end{aligned}\end{equation}

Combining \eqref{eq:upper-1}, \eqref{eq:upper-2}, \eqref{eq:upper-3}, and \eqref{eq:upper-4}, we obtain
\begin{align*}
	\cW_N^2(m_0^N, m_1^N)
	& \leq c^\star(\theta, r) \bW^2(\mu_0, \mu_1)
		+ 3 \eps
		+ \frac{C_1}{N}
		+ C_2 \Big( \sqrt{s} + \frac{1}{N} \Big)		\ .
\end{align*}
Choosing $s$ small enough and $N$ large enough depending on $\eps$, we obtain the result. 
\end{proof}

\section{Proof of the Gromov--Hausdorff convergence}
\label{sec:GH}

We conclude this work with the proof of the Gromov--Hausdorff convergence in Theorem \ref{thm:main}.
First we recall one of the equivalent definitions; cf.  \cite{Burago-Burago-Ivanov:2001} for more details.

\begin{definition}[Gromov--Hausdorff convergence]	
\label{def:GHconv}
A sequence of compact metric spaces $\{\cX_N, \dd _N\}_N$ is said to converge in the sense of Gromov--Hausdorff to a compact metric space $(\cX, \dd)$, if there exist maps $f_N:\cX \rightarrow \cX_N$ with the following properties:
\begin{itemize}
\item $\eps$-isometry: for any $\eps > 0$ there exists $\bar N \in	\N$ such that for any $N \geq \bar N$ and any $x,y \in \cX$, we have:
\begin{align*}
		| \dd_N(f_N(x), f_N(y)) - \dd (x,y) | \leq \eps \ ;
\end{align*}
\item $\eps$-surjectivity: for any $\eps > 0$ there exists $\bar N \in	\N$ such that for any $N \geq \bar N$ and any $z \in \cX_N$ there exists $x \in \cX$ satisfying
\begin{align*}
		\dd_N(f_N(x),z) \leq \eps \ .
\end{align*}
\end{itemize}
\end{definition}

\begin{proof}[Proof of Theorem \ref{thm:main}]
As the desired lower and upper bounds for the distance have been proved in Theorems \ref{thm:lowerbound_quantitative} and \ref{thm:upperbound_quantitative}, it remains to prove the Gromov--Hausdorff convergence. We will show that the conditions above hold with $f_N := P_N$.

Let $\eps > 0$. 
It follows from Theorems \ref{thm:lowerbound_quantitative} and \ref{thm:upperbound_quantitative} that there exists $\bar N \in \N$ such that, for any $N \geq \bar N$ and $\mu_0, \mu_1 \in \cP(\cS^1)$,
\begin{align*}
	\big| \cW_N( P_N \mu_0, P_N \mu_1 ) - \sqrt{c^\star(\theta,\cT)} \bW (\mu_0,\mu_1)   \big| 
	\leq \eps \ .
\end{align*}
This shows that the map $P_N$ is $\eps$-isometric.

The $\eps$-surjectivity of $P_N$ holds trivially, since it is even surjective.
\end{proof}

\small
\subsection*{Acknowledgements}

J.M. gratefully acknowledges support by the European Research Council (ERC) under the European Union's Horizon 2020 research and innovation programme (grant agreement No 716117).
J.M and L.P. also acknowledge support from the Austrian Science Fund (FWF), grants No F65 and W1245. 
E.K. gratefully acknowledges support by the German Research Foundation through the Hausdorff Center for Mathematics and the Collaborative Research Center 1060.
P.G. is partially funded by the Deutsche Forschungsgemeinschaft (DFG, German Research Foundation) -- 350398276.

\normalsize

\bibliography{literature}{}
\bibliographystyle{jan}
\end{document}